 \newlength{\baseunit}               
\newtheorem{theorem}[subsubsection]{Theorem}
\newtheorem{lemma}[theorem]{Lemma}
\newtheorem{prop}[theorem]{Proposition}
\newtheorem{corollary}[subsubsection]{Corollary}
\theoremstyle{definition}
\newtheorem{definition}[subsubsection]{Definition}
\newtheorem{remark}[theorem]{Remark}
\newtheorem{example}[subsubsection]{Example}
\newcommand{\PSh}{\mathsf{PSh}}
\newcommand{\Sh}{\mathsf{Sh}}
\newcommand{\Ind}{\mathsf{Ind}}
\newcommand{\tto}{\twoheadrightarrow}
\newcommand{\Set}{\mathsf{Set}}
\newcommand{\Ab}{\mathsf{Ab}}
\newcommand{\Fun}{\mathsf{Fun}}
\newcommand{\Loc}{\mathsf{Cr}}
\newcommand{\Top}{\mathsf{Top}}
\newcommand{\QCoh}{\mathsf{QCoh}}
\newcommand{\ann}{\mathrm{ann}}
\newcommand{\Spec}{\mathrm{Spec}}
\newcommand{\Mod}{\mathsf{Mod}_k}
\newcommand{\Vecc}{\mathsf{Vec}}
\newcommand{\ba}{\mathbf{a}}
\newcommand{\bb}{\mathbf{b}}
\newcommand{\bc}{\mathbf{c}}
\newcommand{\bd}{\mathbf{d}}
\newcommand{\bj}{\mathbf{j}}
\newcommand{\bT}{\mathbf{T}}
\newcommand{\bD}{\mathbf{D}}
\newcommand{\bC}{\mathbf{C}}
\newcommand{\bB}{\mathbf{B}}
\newcommand{\bA}{\mathbf{A}}
\newcommand{\cS}{\mathcal{S}}
\newcommand{\cT}{\mathcal{T}}
\newcommand{\Co}{\mathcal{C}o}
\newcommand{\cV}{\mathcal{V}}
\newcommand{\cU}{\mathcal{U}}
\newcommand{\cM}{\mathcal{M}}
\newcommand{\cO}{\mathcal{O}}
\newcommand{\cL}{\mathcal{L}}
\newcommand{\preo}{\mathrm{pre}'}
\newcommand{\pre}{\mathrm{pre}}
\newcommand{\topp}{\mathrm{top}}
\newcommand{\id}{\mathrm{id}}
\newcommand{\Nat}{\mathrm{Nat}}
\newcommand{\el}{\mathsf{el}}
\newcommand{\coker}{\mathrm{coker}}
\newcommand{\Hom}{\mathrm{Hom}}
\newcommand{\ev}{\mathrm{ev}}
\newcommand{\co}{\mathrm{co}}
\newcommand{\Lan}{\mathrm{Lan}}
\newcommand{\Ob}{\mathrm{Ob}}
\newcommand{\Proj}{\mathrm{Proj}}
\newcommand{\im}{\mathrm{im}}
\newcommand{\op}{\mathrm{op}}
\newcommand{\colim}{\mathrm{colim}}
\newcommand{\mZ}{\mathbb{Z}}
\newcommand{\mA}{\mathbb{A}}
\newcommand{\mN}{\mathbb{N}}
\newcommand{\mX}{\mathbb{X}}
\newcommand{\mY}{\mathbb{Y}}
\newcommand{\mP}{\mathbb{P}}
\newcommand{\Yon}{\mathtt{Y}}
\newcommand{\tI}{\mathtt{I}}
\newcommand{\tS}{\mathtt{S}}
\newcommand{\tZ}{\mathtt{Z}}
\newcommand{\unit}{{\mathbf{1}}}
\begin{document}
\title[Additive pretopologies]{Additive Grothendieck pretopologies and presentations of tensor categories}
\author{Kevin Coulembier}

\address{School of Mathematics and Statistics, University of Sydney, NSW 2006, Australia}
\email{kevin.coulembier@sydney.edu.au}

\subjclass[2020]{18E10, 18E35, 	18F10, 18D15}

\keywords{Additive Grothendieck topology, Grothendieck category, noetherian and subcanonical topologies, tensor category} 

\begin{abstract}
We study how tensor categories can be presented in terms of rigid monoidal categories and Grothendieck topologies and show that such presentations lead to strong universal properties.
As the main tool in this study, we define a notion on preadditive categories which plays a role similar to (a generalisation of) the notion of a Grothendieck pretopology on an unenriched category. Each such additive pretopology defines an additive Grothendieck topology and suffices to define the sheaf category. This new notion also allows us to study the noetherian and subcanonical nature of additive topologies, to describe easily the join of a family of additive topologies and to identify  useful universal properties of the sheaf category. 
\end{abstract}

\maketitle


\section*{Introduction}

The notion of a Grothendieck topology on a small category $\bc$ allows one to define a category of sheaves and the development of sheaf cohomology, most notably \'etale cohomology on a scheme, see~\cite{SGA4, Ta}.
According to the original terminology, a Grothendieck topology is a collection of `sieves' satisfying suitable axioms. A Grothendieck {\em pre}topology is a collection of `coverings' $\{U_i\to U\,|\, i\}$, generalising the properties of open coverings of topological spaces. Each pretopology induces a topology, but many pretopologies induce the same topology.

 It has become customary to refer to pretopologies simply as topologies, see \cite{Ta}. From some points of view it can indeed be preferable to work with pretopologies. A presheaf $F:\bc^{\op}\to \Set$ is a sheaf for the induced topology if and only if, for every covering $\{U_i\to U\}$ in the pretopology, the diagram
\begin{equation}\label{classsheaf}F(U)\;\to\; \prod_i F(U_i)\;\rightrightarrows\; \prod_{j,l}F(U_j\times_UU_l)\end{equation}
is an equaliser. This recovers the familiar definition of a sheaf on a topological space.

The notion of Grothendieck topologies extends canonically to enriched categories, see \cite{BQ}. Furthermore, by \cite{BQ} and the Gabriel-Popescu theorem \cite{PG}, a category is Grothendieck abelian if and only if it can be realised as the category of additive sheaves on a preadditive site. 
On the other hand, the notion of a Grothendieck pretopology does not extend naively to the additive setting, see Appendix~\ref{Issues}. Furthermore, the existence of pullbacks to make sense of~\eqref{classsheaf} is not satisfied in the generality we want to apply our results. However, there are some examples of additive Grothendieck topologies where one can (and usually does) define sheaves with respect to exactness of certain sequences, similar to the classical sheaf condition~\eqref{classsheaf}. Firstly, let $\ba$ be a small abelian category. The ind-completion $\Ind\ba$ can be realised as the category of left exact functors $\ba^{\op}\to\Ab$.  Secondly, we can start from a category $\bc$ with a (non-enriched) pretopology and consider the preadditive category $\mZ\bc$ freely generated by $\bc$. By taking the difference of the two canonical maps we obtain, for every covering $\{U_i\to U\}$, a sequence
$$\amalg_{j,l}U_j \times_U U_l\;\to\;\amalg_i U_i \;\to\; U\;\to\;0,$$
where the coproducts are `formal' unless $\bc$ admits coproducts of sufficiently large cardinality. The Grothendieck category of abelian sheaves on $\bc$ can be realised as the category of additive functors $\mZ\bc^{\op}\to\Ab$ which send the above sequences to exact ones in $\Ab$.
 
We therefore define an additive pretopology on a preadditive category $\ba$ to be a class of formal sequences in $\ba$
$$\amalg_\gamma Z_\gamma\;\to\; \amalg_\beta Y_\beta\;\to \; X,$$
which satisfies two simple conditions. Our definition is much broader than what is needed to incorporate our two guiding examples above. The main advantage is that the union of two pretopologies will again be a pretopology. This feature, which is not satisfied for unenriched pretopologies, is useful for several applications. In Appendix~\ref{App} we show how the unenriched notion of pretopologies can be generalised to a form which one can almost naively linearise to obtain our definition of linear pretopologies.

We prove that each additive pretopology yields an additive topology and that every additive topology comes from some additive pretopology. Furthermore, sheaves with respect to the topology correspond precisely to additive functors $F:\ba^{\op}\to\Ab$ for which 
$$0\;\to\; F(X)\;\to\;\prod_\beta F(Y_\beta)\;\to \; \prod_\gamma F(Z_\gamma)$$
is exact for all sequences in the pretopology.

For the specific case of additive topologies which are both noetherian and subcanonical (see Definitions~\ref{DefNoe} and~\ref{defsub}) on a category $\ba$ which moreover is additive, our theory essentially recovers the theory of `ind-classes' as developed by Sch\"appi in \cite{Sch1, Sch2}. Our theory is directly inspired by the latter, and this is in particular the case for many of the techniques in Section~\ref{SecSheaves}.

Part of the motivation for the above constructions comes from the theory of tensor categories, in the sense of \cite{Del90, EGNO}, more specifically the theory of abelian envelopes, see \cite{EHS, AbEnv} and references therein. We investigate in which ways one can present a tensor category (or more precisely its ind-completion) via a generating rigid monoidal category and a monoidal (additive) Grothendieck topology. Using the above techniques we can reduce this to a manageable set of topologies and show that they come with strong universal properties, which will be crucial for applications in work in preparation. The main results here are Corollary~\ref{CorLow}, which significantly generalises \cite[Theorem~9.2.2]{EHS}, and Theorem~\ref{Thm3case}.

The paper is organised as follows. In Section~\ref{SecPrel} we recall the required background on Grothendieck topologies and categories. In Section~\ref{SecPT} we introduce additive pretopologies, establish the connection with additive topologies, prove the characterisation of sheaves and demonstrate that the notion of a pretopology provides a universal property for the sheaf category. In Section~\ref{PropTop} we study subcanonical and noetherian topologies, as inspired by the corresponding unenriched notions in 
\cite[I.\S 1.3 and I.\S 3.10]{Ta}. An additive topology is subcanonical if the representable presheaves are sheaves and noetherian if the sheafifications of the representable presheaves are compact. These notions are difficult to characterise directly from the topology, but are easily described on the level of pretopologies. We also introduce the notion of monoidal additive Grothendieck topologies.
We end the section with a discussion of the `tensor product' of two Grothendieck categories from~\cite{LRS}. In Section~\ref{SecTensor} we investigate the presentations of tensor categories. In Appendix~\ref{App}, we look back at non-enriched Gorthendieck pretopologies. We generalise this notion via some natural steps to arrive at a non-linear version of the notion we put forward as an additive Grothendieck pretopology.


\section{Preliminaries}\label{SecPrel}

\subsection{Notation and conventions}

\subsubsection{}\label{notation} We set $\mN=\{0,1,2,\ldots\}$. All categories we will consider are locally small. We will always {\em work over an unspecified commutative ring $k$.} For Sections~\ref{SecPrel} to \ref{PropTop}, essentially nothing is lost if one just assumes $k=\mZ$. We will also always denote by {\em $\ba$ an essentially small $k$-linear category.}

For a ring $R$ we will denote by $\underline{R}$ the (preadditive) category with one object $\bullet$ which has endomorphism ring $R$.

For a set $S$, we denote by $kS$ the free $k$-module on generators $\{e_s\,|\, s\in S\}$.
\subsubsection{}

For two $k$-linear categories $\bB,\bC$, we denote by $[\bB,\bC]$ the $k$-linear category of $k$-linear functors $\bB\to\bC$. In particular, we set
$$\PSh\ba\;:=\; [\ba^{\op},\Mod].$$
Note that $\PSh\ba$ (as an additive category) does not change if we replace $k$ by $\mZ$.


 We will typically leave out reference to $k$, so `functor', `sieve' and `(pre)topology' will refer to the $k$-linear versions (with the obvious exception of Appendix~\ref{App}).
We will use the symbol $\oplus$ for coproducts and reserve $\amalg$ for `formal coproducts'.

\subsubsection{} Recall that a cardinal $\kappa$ is {\bf regular} if the class of sets of cardinality strictly lower than $\kappa$ is closed under taking unions of strictly fewer than $\kappa$ sets. We only assume that a regular cardinal is bigger than $1$. In other words, we consider the regular cardinal $2$ and infinite regular cardinals.

\subsection{Grothendieck categories}

\subsubsection{}\label{DefGen}An object $G$ in a category $\bC$ is a \textbf{generator} if $\bC(G,-):\bC\to\Set$ is faithful. More generally, we call an essentially small full subcategory $\bc\subset\bC$ a generator if $\prod_{A\in\bc} \bC(A,-)$ is faithful.


A \textbf{Grothendieck category} $\bC$ is an AB5 abelian category which admits a generator. In other words, $\bC$ is abelian with a generator, $\bC$ admits set-indexed coproducts, and filtered colimits of short exact sequences in $\bC$ are exact. 
A Grothendieck category has enough injective objects and by Freyd's special adjoint functor theorem, a cocontinuous functor out of a Grothendieck category has a right adjoint.

The presheaf category
$\PSh\ba$ is a $k$-linear Grothendieck category, with generator $\ba$, identified with a full subcategory via the Yoneda embedding $\Yon:\ba\to\PSh\ba$.

\subsubsection{Localisations}\label{SecLoc}Let $\bB$ be a $k$-linear Grothendieck category. 
A fully faithful $k$-linear functor $\mathtt{I}:\bC\hookrightarrow\bB$, from a $k$-linear category $\bC$ is {\bf reflective} if it has a left adjoint $\tS:\bB\to\bC$, which we refer to as the sheafification. This terminology is a generalisation of the ordinary usage for localisations, see next paragraph, for which $\tS$ is an actual sheafification, see \ref{ThmBQ}-\ref{sheafif}. If $\tI$ is reflective, it follows that $\bC$ is cocomplete.

A reflective functor $\mathtt{I}$ is a {\bf localisation} if $\tS$ is (left) exact. If $\tI$ is a localisation, it follows that $\bC$ is a Grothendieck category itself. By a {\bf Giraud subcategory} of $\bB$ we mean a full replete subcategory for which the inclusion is a localisation. In other words, Giraud subcategories correspond to equivalence classes of localisations. Note that in the literature the notions of `localisation' and `Giraud subcategory' of $\bB$ appear interchangeably, but we use them for slightly distinct structures.

Besides the notation $\tI$ and $\tS$ for the inclusion and sheafification of a reflective subcategory $\bC$ of $\PSh\ba$, we will fix the notation $\tZ$ for the functor
$$\tZ=\tS\circ\Yon\,:\; \ba\,\to\,\bC.$$

By the Gabriel-Popescu theorem \cite{PG}, a Grothendieck category $\bC$ with generator $G\in\bC$ is a localisation of $\mathsf{Mod}_R$, for the ring $R:=\bC(G,G)$.
 It is often convenient to consider a slight generalisation, which recovers the original theorem for a one object category $\bc$.
\begin{theorem}[Gabriel - Popescu]\label{ThmGP}
For a Grothendieck category $\bC$ with generator $\bc\subset\bC$, the functor $\bC\to\PSh\bc: M\mapsto \bC(-,M)|_{\bc}$ is a localisation of $\PSh\bc$.
\end{theorem}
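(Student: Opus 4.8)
The plan is to rerun the proof of the classical Gabriel--Popescu theorem (\cite{PG}), which requires no essential change to accommodate a generating subcategory $\bc$ rather than a single generator: all but one step is formal Yoneda/adjunction bookkeeping insensitive to the number of objects of $\bc$, and the remaining, substantial step rests on the AB5 axiom, which is equally insensitive to it. Write $\iota\colon\bc\hookrightarrow\bC$ for the inclusion and $h\colon\bC\to\PSh\bc$ for the functor $M\mapsto\bC(\iota(-),M)$. First I would produce a left adjoint $L$ to $h$, namely the left Kan extension of $\iota$ along the Yoneda embedding $\Yon$; it exists because $\bC$ is cocomplete (given by the usual colimit formula), satisfies $L\circ\Yon\cong\iota$, and the standard enriched nerve/realisation argument shows $L\dashv h$. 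In particular $\tZ=L\circ\Yon\cong\iota$, and, being cocontinuous, $L$ is right exact.

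Two formal points come next. As $\bc$ is a generator, $h(\phi)=0$ forces $\bC(A,\phi)=0$ for every $A\in\bc$ and hence $\phi=0$, so $h$ is faithful; moreover, for each $M\in\bC$ the canonical morphism $p\colon\bigoplus_{A\in\bc}\bigoplus_{f\in\bC(A,M)}A\to M$ is an epimorphism. (If $I\subseteq M$ denotes its image then every such $f$ factors through $I$, so left exactness of the functors $\bC(A,-)$ shows that $\bC(A,M)\to\bC(A,M/I)$ vanishes for all $A$, whence $M\to M/I$ vanishes and $M/I=0$.) On the other hand, by the very definition of the counit $\varepsilon\colon Lh\Rightarrow\id_\bC$, each $f\colon A\to M$ with $A\in\bc$ factors as $A\cong L\Yon A\xrightarrow{\,Lf'\,}LhM\xrightarrow{\,\varepsilon_M\,}M$, where $f'\colon\Yon A\to hM$ is the morphism adjunct to $f$. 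Hence $p$ factors through $\varepsilon_M$, so $\varepsilon_M$ is an epimorphism for every $M$.

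It remains to prove that $L$ is left exact and that $\varepsilon_M$ is a monomorphism for every $M$ --- equivalently, that $h$ is fully faithful. This is the crux, and the step I expect to demand real work: it does not follow from the formal manipulations above and uses the AB5 axiom essentially. The argument is the one in \cite{PG}: write an arbitrary presheaf as the filtered colimit of its finitely generated subpresheaves (quotients of finite coproducts of representables); since $L$ sends a representable $\Yon A$ to $A\in\bc$ and preserves colimits, the exactness of $L$ and the vanishing of $\ker\varepsilon_M$ (equivalently of the kernel of the unit $\eta_F\colon F\to hLF$) reduce --- using exactness of filtered colimits in $\bC$ --- to a direct check for finitely generated presheaves. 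None of this sees whether $\bc$ has one object or many.

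Granting this, $h$ is fully faithful with exact left adjoint $L=\tS$, hence a localisation; thus $\bC$ is equivalent to a Giraud subcategory of $\PSh\bc$, which is the assertion. I record one shortcut I would \emph{not} take: replacing $\bc$ by the single object $G:=\bigoplus_{A\in\bc}A$ of $\bC$ (a one-object generator, to which the cited form of Gabriel--Popescu applies) does not identify $\PSh\bc$ with $\mathsf{Mod}_{\bC(G,G)}$ in general, because $\bC(G,G)$ remembers the infinite coproduct while $\PSh\bc$ does not; so it is cleanest simply to rerun the proof of \cite{PG}.
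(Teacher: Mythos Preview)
The paper does not prove this theorem; it is recorded as background with attribution to \cite{PG}, the many-object version being noted as a slight and standard generalisation (the paper also cites \cite{Lo} immediately afterwards, where the generalisation is treated). Your plan to rerun the classical argument is therefore exactly what is called for, and your outline is the standard one: the formal steps ($L=\Lan_{\Yon}\iota\dashv h$, $h$ faithful, $\varepsilon_M$ epi) followed by the AB5-dependent key lemma that $LR\to M$ is a monomorphism for every subfunctor $R\subseteq hM$, from which fullness of $h$ and then left exactness of $L$ follow. Your remark that collapsing $\bc$ to the single object $\bigoplus_{A\in\bc}A$ does not identify $\PSh\bc$ with the corresponding module category is also correct.

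One small slip: vanishing of $\ker\varepsilon_M$ (which gives fullness of $h$) is not the same as vanishing of $\ker\eta_F$; the latter need not hold for arbitrary $F$. Both the fullness of $h$ and the exactness of $L$ do, as you say, reduce via AB5 to finitely generated presheaves, but they are separate reductions: the first is the key lemma above with $R=hM$, and the second requires a further short argument (cover a finitely generated $G'$ by a finite $\bigoplus\Yon A_i=h(\bigoplus A_i)$ and apply the key lemma to the pulled-back subobject) to show that $L$ preserves monomorphisms.
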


\subsubsection{Creators}\label{creator} 
Let $\bC$ be a Grothendieck category. By the right adjoint of a $k$-linear functor $u:\ba\to\bC$ we refer to the right adjoint of the left Kan extension $\Lan_{\Yon}u:\PSh\ba\to\bC$. This right adjoint is explicitly given by $\bC\to\PSh\ba$: $M\mapsto \bC(u-,M)$.

We say that such a $u:\ba\to\bC$  for which the right adjoint is a localisation (so in particular fully faithful) is a {\bf creator}.
In \cite{Lo} these functors are intrinsically characterised and an obvious necessary condition is that the essential image of $u$ is a generator in~$\bC$. Theorem~\ref{ThmGP} can then be rephrased as saying that the embedding of a generator is a creator.

\subsection{Compact objects} Let $\kappa$ be an infinite regular cardinal. Our main case of interest will be $\kappa=\aleph_0$, in which case we omit $\kappa$ from terminology. Let $\bC$ be a Grothendieck category. An object $X\in\bC$ is $\kappa$-compact (resp. $\kappa$-generated) if $\bC(X,-):\bC\to\Ab$ commutes with all $\kappa$-filtered colimits (resp. colimits of $\kappa$-filtered diagrams consisting of monomorphisms).

\begin{lemma}\label{LemCompact}
\begin{enumerate}[label=(\roman*)]
\item Let $A\in\bC$ be $\kappa$-generated. For every epimorphism $\oplus_{\beta\in B}M_\beta\tto A$ in $\bC$, there exists $B_0\subset B$ with $|B_0|<\kappa$ such that $\oplus_{\beta\in B_0}M_\beta\tto A$ is still an epimorphism.
\item Let $A\in\bC$ be $\kappa$-compact. For every exact sequence
$$\bigoplus_{\gamma\in C}N_\gamma\;\to\;\bigoplus_{\beta\in B}M_\beta\;\to\; A\;\to\; 0$$
in $\bC$ with each $N_\gamma$ $\kappa$-compact (or more generally such that $N_\gamma\to \bigoplus_{\beta\in B}M_\beta$ factors through a sub-coproduct of size less than $\kappa$, for each $\gamma$), there exist subsets $B_0\subset B$ and $C_0\subset C$ with $|B_0|<\kappa>|C_0|$ such that
restricting the summations to these subsets still yields an exact sequence.
\item The presheaf $F\in\PSh\ba$ is $\kappa$-compact if and only if there exists an exact sequence
$$\bigoplus_{\gamma\in C}\Yon(Z_\gamma)\;\to\;\bigoplus_{\beta\in B}\Yon(Y_\beta)\;\to\; F\;\to\; 0$$
with $|B|<\kappa>|C|$.
\item The presheaf $F\in\PSh\ba$ is $\kappa$-generated if and only if there exists an epimorphism
$\bigoplus_{\beta\in B}\Yon(Y_\beta)\tto F$
with $|B|<\kappa$.
\end{enumerate}
\end{lemma}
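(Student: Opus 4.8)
For (i)--(iv) the plan is to prove (i) directly, to deduce (ii) from (i) by a $\kappa$-filtered colimit argument that invokes the $\kappa$-compactness of $A$, and then to obtain (iii) and (iv) by running (i) and (ii) inside the Grothendieck category $\PSh\ba$, applied to the canonical presentation of a presheaf by representables. I shall use repeatedly that in a Grothendieck category $\kappa$-filtered unions of subobjects commute with finite intersections (a consequence of AB5), that $\Yon(Y)$ is compact because $\bC(\Yon(Y),-)$ is evaluation at $Y$ and hence preserves all colimits, and that a coproduct of strictly fewer than $\kappa$ compact objects is $\kappa$-compact, since the regularity of $\kappa$ makes $<\kappa$-indexed products commute with $\kappa$-filtered colimits in $\Mod$.

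For (i): the subsets $B_0\subseteq B$ with $|B_0|<\kappa$ form a $\kappa$-filtered poset under inclusion (regularity of $\kappa$), and $\oplus_{\beta\in B}M_\beta=\colim_{B_0}\oplus_{\beta\in B_0}M_\beta$ along monomorphisms. Composing with the given epimorphism and using AB5, the images $A_{B_0}:=\im\!\bigl(\oplus_{\beta\in B_0}M_\beta\to A\bigr)$ form a $\kappa$-filtered family of subobjects of $A$ with union $A$. Since $A$ is $\kappa$-generated, $\bC(A,-)$ carries this to a colimit in $\Mod$, so $\id_A$ factors through some $A_{B_0}$; hence $A_{B_0}\hookrightarrow A$ is a split monomorphism, therefore an isomorphism, and $\oplus_{\beta\in B_0}M_\beta\tto A$.

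For (ii): first apply (i) (every $\kappa$-compact object is $\kappa$-generated) to obtain $B_0$ with $|B_0|<\kappa$ and an epimorphism $p_0\colon P_0:=\oplus_{\beta\in B_0}M_\beta\tto A$; set $K_0=\ker p_0$ and $K=\ker p$. Writing $K_{C_0}:=\sum_{\gamma\in C_0}\im\!\bigl(N_\gamma\to\oplus_\beta M_\beta\bigr)$ for $C_0\subseteq C$ of size $<\kappa$, we have $K=\bigcup_{C_0}K_{C_0}$, a $\kappa$-filtered union, so by AB5 $K_0=K\cap P_0=\bigcup_{C_0}(K_{C_0}\cap P_0)$ and $A=P_0/K_0=\colim_{C_0}P_0/(K_{C_0}\cap P_0)$ is a $\kappa$-filtered colimit along epimorphisms. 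Since $A$ is $\kappa$-compact, $\id_A$ factors through $P_0/(K_{C_0}\cap P_0)$ for some $C_0$ of size $<\kappa$. One then has to promote this to the desired exact sequence, namely to arrange, after enlarging $B_0$ and $C_0$ within the bound $<\kappa$, that $\im\!\bigl(\oplus_{\gamma\in C_0}N_\gamma\to\oplus_\beta M_\beta\bigr)$ lies in $P_0$ and equals $K_0$, so that $\oplus_{\gamma\in C_0}N_\gamma\to\oplus_{\beta\in B_0}M_\beta\to A\to 0$ is exact. This last bookkeeping step, keeping every index set of cardinality $<\kappa$ while invoking the full strength of $\kappa$-compactness (rather than mere $\kappa$-generatedness) of $A$ together with the regularity of $\kappa$, is the point I expect to be the main obstacle.

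For (iii) and (iv): the ``if'' directions are formal. A $<\kappa$-coproduct of the compact objects $\Yon(Y_\beta)$ is $\kappa$-compact, hence $\kappa$-generated, and $\kappa$-generatedness is inherited by quotients, which gives (iv); for (iii), $F$ is the cokernel of a morphism $\oplus_\gamma\Yon(Z_\gamma)\to\oplus_\beta\Yon(Y_\beta)$ between $\kappa$-compact objects, and applying $\Hom(-,\colim_j D_j)$ for a $\kappa$-filtered system $(D_j)$ --- and using left exactness of $\Hom$ in its first variable, the $\kappa$-compactness of the two coproducts, and the exactness of $\kappa$-filtered colimits in $\Mod$ --- shows that $\bC(F,-)$ preserves $\kappa$-filtered colimits. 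For the ``only if'' directions, work inside $\PSh\ba$: the canonical morphism $\oplus_{(Y,x)}\Yon(Y)\tto F$ indexed by pairs $(Y\in\ba,\,x\in F(Y))$ is an epimorphism, and combining it with the analogous presentation of its kernel yields an exact sequence $\oplus_\gamma\Yon(Z_\gamma)\to\oplus_\beta\Yon(Y_\beta)\to F\to 0$ of the required form; now (i) applied to the first map proves (iv), and (ii) applied to the whole sequence proves (iii), using that $F$ is respectively $\kappa$-generated and $\kappa$-compact.
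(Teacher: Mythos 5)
Your parts (i), (iii) and (iv) are correct and follow the paper's own route: the paper likewise proves (i) by writing $A$ as the $\kappa$-filtered union of the images $A_{B'}$ and invoking $\kappa$-generation, deduces the ``only if'' halves of (iii)--(iv) by applying (i)--(ii) to the canonical presentation of $F$ by representables, and gets the ``if'' halves from the commutation of $\kappa$-filtered colimits with $\kappa$-small limits in $\Mod$ --- which is exactly your observation that a $<\kappa$-indexed coproduct of representables is $\kappa$-compact.

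The step you flag in (ii) is a genuine gap, and in fact it cannot be closed in the generality in which (ii) is stated. Two things go wrong. First, $\kappa$-compactness of $A$ only makes $\id_A$ factor through some $X_{C_0}=P_0/(K_{C_0}\cap P_0)$, i.e.\ it makes $X_{C_0}\to A$ a \emph{split} epimorphism, and a split epimorphism onto the colimit need not become an isomorphism at any stage. Second, to restrict the left-hand map at all you need each $N_\gamma\to\oplus_{\beta\in B}M_\beta$ to factor through a subcoproduct of size $<\kappa$, which requires some compactness of $N_\gamma$. Both obstructions are realised in $\Ab$ with $\kappa=\aleph_0$: the exact sequence
$$\bigoplus_{n\in\mathbb{N}}\mathbb{Q}\;\to\;\mathbb{Z}\oplus\Bigl(\bigoplus_{n\in\mathbb{N}}\mathbb{Q}\Bigr)\;\to\;\mathbb{Z}\;\to\;0$$
(inclusion of the second summand followed by projection onto the first) admits no restriction to finite index sets that stays exact, even though $\mathbb{Z}$ is compact; here every $X_{C_0}\to\mathbb{Z}$ is split epi and none is invertible. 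So (ii) implicitly carries the hypothesis that the $M_\beta$ and $N_\gamma$ are themselves $\kappa$-compact (or at least $\kappa$-generated), which holds in every application in the paper: representables in part (iii), and objects $\tZ(Y)$ for a $\kappa$-noetherian topology in Lemma~\ref{LemNoe}. Under that hypothesis the argument can be completed along the lines you indicate: $P_0$ is then $\kappa$-compact, so the two endomorphisms $sq_{C_0}$ and $\id$ of $X_{C_0}$, which agree after composition to the colimit $A$, can be tested against the epimorphism $P_0\tto X_{C_0}$ and shown to agree after some transition map, which forces a later $X_{C_1}\to A$ to be an isomorphism; regularity of $\kappa$ then lets you enlarge $B_0$ to contain the supports of the $N_\gamma$ with $\gamma\in C_1$, and one checks the resulting sequence is exact. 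For what it is worth, the paper's own proof of (ii) is a two-line sketch that elides exactly the same point, so your difficulty reflects a real issue with the statement rather than a misreading of the intended argument; but as written your (ii), and with it the ``only if'' direction of (iii), is incomplete.
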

\begin{proof}
In part (i), for each $B'\subset B$ with $|B'|<\kappa$ denote by $A_{B'}$ the image of $\oplus_{\beta\in B'}M_\beta\to A$. Then $A$ is the ($\kappa$-filtered) colimit of these subobjects, so part (i) follows by definition.

Part (ii) follows similarly. For each $C'\subset C$ with $|C'|<\kappa$, we choose $B'\subset B$ with $|B'|<\kappa$ such that $\bigoplus_{\gamma\in C'}N_\gamma\to\bigoplus_{\beta\in B}M_\beta$ factors through $\bigoplus_{\beta\in B'}M_\beta$. This realises $A$ as a $\kappa$-filtered colimit of the cokernels of the above morphisms.

One direction of parts (iii) and (iv) follows from parts (i) and (ii). The other directions follow by using the fact that in $\Ab$, $\kappa$-filtered colimits commute with $\kappa$-small limits.
\end{proof}

\subsection{Linear Grothendieck topologies} We briefly review the theory of enriched topologies from \cite{BQ}, for the case of $k$-linear enrichment.

\subsubsection{}
For $A\in\ba$, a ($k$-linear) {\bf sieve on} $A$ is a $k$-linear subfunctor of $\Yon(A)=\ba(-,A)\in\PSh\ba$. For a sieve $R$ on $A$ and a morphism $f:B\to A$ in $\ba$, we denote by $f^{-1}R$ the sieve on $B$ which is the pullback of $R\to \ba(-,A)\leftarrow \ba(-,B)$ in $\PSh\ba$. Concretely, we have
$$g\in f^{-1}R(C)\;\Leftrightarrow\; f\circ g\in R(C),\qquad\mbox{for all $C\in A$ and $g\in\ba(C,B)$}.$$

A {\bf covering system} $\cT$ on $\ba$ is an assignment to each $A\in\ba$ of a set $\cT(A)$ of sieves on $A$.
\begin{definition}\label{DefT} A $k$-linear Grothendieck {\bf topology} is a covering system $\cT$ on $\ba$ such that for every $A\in\ba$:
\begin{enumerate}
\item[(T1)] We have $\ba(-,A)\in \cT(A)$;
\item[(T2)] For $R\in \cT(A)$ and a morphism $f:B\to A$  in $\ba$, we have $f^{-1}R\in \cT(B)$;
\item[(T3)] For a sieve $S$ on $A$ and $R\in\cT(A)$ such that for every $B\in\ba$ and $f\in R(B)\subset \ba(B,A)$ we have $f^{-1}S \in \cT(B)$, it follows that $S\in\cT(A)$.
\end{enumerate}
\end{definition}

\noindent A direct consequence of (T1) and (T3) is the following property:
\begin{itemize}
\item[(T4)] For a sieve $S$ on $A$ and $R\in\cT(A)$ such that $R\subset S$, it follows that $S\in\cT(A)$.
\end{itemize}
Similarly, a consequence of (T2) and (T3) is:
\begin{itemize}
\item[(T5)] If $R_1,R_2\in\cT(A)$, then $R_1\cap R_2 \in\cT(A)$.
\end{itemize}

Grothendieck topologies on $\ba$ are the same when regarding $\ba$ as a $k$-linear category or as a preadditive ($\mZ$-linear) category. It is just useful to take the $k$-linearity along for specific applications. It is common to refer to Grothendieck topologies on $\underline{R}$ as {\bf Gabriel topologies} on $R$, for a ring $R$.

\subsubsection{}\label{DefMeet}The class of covering systems, so also the set of Grothendieck topologies, is ordered by inclusion. We say that $\cT_1$ is a {\bf refinement} of $\cT$ if $\cT\subset\cT_1$, which means that $ \cT(A)\subset\cT_1(A)$ for all $A\in\ba$. It is clear that for a family of Grothendieck topologies $\{\cT_i\}$ the covering system $\cap_i\cT_i$ is again a Grothendieck topology. The same is not true for $\cup_i\cT_i$, see Example~\ref{Union}. However, by the previous property, we have a well-defined notion of the minimal (coarsest) Grothendieck topology containing all $\cT_i$, which is denoted by $\vee_i\cT_i$.

\begin{definition}\label{Def2}
For a $k$-linear Grothendieck topology $\cT$ on $\ba$, a presheaf $F\in\PSh\ba$ is a {\bf $\cT$-sheaf} if for every $A\in\ba$ and $R\in\cT(A)$, the morphism induced from $R\hookrightarrow \ba(-,A)$
$$F(A)\simeq \Nat(\ba(-,A),F)\to \Nat(R,F)$$
is an isomorphism. The full subcategory of $\PSh\ba$ of $\cT$-sheaves is denoted by $\Sh(\ba,\cT)$.
\end{definition}

When $F(A)\to \Nat(R,F)$ is a monomorphism for every $A\in\ba$ and $R\in\cT(A)$, we say that the presheaf $F$ is {\bf $\cT$-separated}.

The following theorem can be extracted from \cite[Theorem~1.5]{BQ} and its proof. 

\begin{theorem}[Borceux - Quinteiro]\label{ThmBQ}${}$
\begin{enumerate}[label=(\roman*)]
\item For each $k$-linear Grothendieck topology $\cT$ on $\ba$, the subcategory $\Sh(\ba,\cT)$ is a localisation of $\PSh\ba$.
\item For each localisation $\tI:\bC\hookrightarrow \PSh\ba$, the covering system $\cT$ on $\ba$ given by all sieves $R\subset \ba(-,X)$ which satisfy one of the equivalent conditions
\begin{enumerate}
\item $\Nat(\ba(-,X),\tI M)\to \Nat(R,\tI M)$ is an isomorphism for every $M\in\bC$;
\item $\tS$ maps $R\hookrightarrow \Yon(X)$ to an isomorphism; 
\end{enumerate}
is a $k$-linear Grothendieck topology.
\end{enumerate}
The above procedures give mutually inverse bijections between the set of $k$-linear Grothendieck topologies on $\ba$ and the set of Giraud subcategories of $\PSh\ba$.
\end{theorem}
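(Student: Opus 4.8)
The plan is to carry out the $k$-linear version of the classical correspondence between Grothendieck topologies and localisations of a presheaf category, essentially as in \cite{BQ}; the enrichment over $k$ should cause no difficulty beyond the unenriched case. For part~(i), fix a topology $\cT$ and define the plus construction $F^{+}(A)=\colim_{R\in\cT(A)}\Nat(R,F)$, the colimit taken over $\cT(A)$ ordered by reverse inclusion, which is filtered by (T5). Axiom (T2) makes $F^{+}$ a presheaf and $(-)^{+}$ a $k$-linear endofunctor of $\PSh\ba$, and (T1) provides a natural transformation $\id\Rightarrow(-)^{+}$. Since each $\Nat(R,-)$ is left exact and filtered colimits in $\Mod$ are exact, $(-)^{+}$ is left exact. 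One then establishes the three standard facts: $F^{+}$ is always $\cT$-separated; if $F$ is $\cT$-separated then $F^{+}$ is a $\cT$-sheaf; and $F$ is a $\cT$-sheaf if and only if $\id\Rightarrow(-)^{+}$ is invertible at $F$. It follows that $F\mapsto F^{++}$ is a left-exact left adjoint of the inclusion $\Sh(\ba,\cT)\hookrightarrow\PSh\ba$, which is therefore a localisation.

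For part~(ii), let $\tS\dashv\tI$ be a localisation with unit $\eta$ and define $\cT$ as in the statement. Conditions (a) and (b) agree because, by adjunction, $\Nat(\Yon(A),\tI M)\cong\bC(\tZ A,M)$ and $\Nat(R,\tI M)\cong\bC(\tS R,M)$, identifying the comparison map with $\bC(\tS j,M)$ for $j\colon R\hookrightarrow\Yon(A)$, which is an isomorphism for all $M$ precisely when $\tS j$ is (Yoneda in $\bC$). Now (T1) is immediate, and (T2) holds because $f^{-1}R\hookrightarrow\Yon(B)$ is by construction a base change of $j$ along $\Yon(f)$ and the exact functor $\tS$ preserves this base change, hence sends it to an isomorphism.

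Axiom (T3) is the crux, and the step I expect to be the main obstacle. Given a sieve $S$ and $R\in\cT(A)$ with $f^{-1}S\in\cT(B)$ for every $f\in R(B)$, one must show that each $\tI M$ is an $S$-sheaf, equivalently that $\tS(S\hookrightarrow\Yon(A))$ is invertible. Each $\tI M$ is already a sheaf, hence separated, for every sieve already known to cover, so one argues by gluing: a natural $g\colon S\to\tI M$ restricts over each $f\in R(B)$ along the covering sieve $f^{-1}S$ to a unique section $b_{f}$ of $\tI M$ at $B$; these assemble, using separatedness along the $f^{-1}S$, into a natural $\phi\colon R\to\tI M$; since $R$ covers, $\phi$ extends uniquely to $a\colon\Yon(A)\to\tI M$; and a final comparison along the covering sieves $s^{-1}R$ ($s\in S$) identifies $a|_{S}$ with $g$ and forces uniqueness. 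This bootstrap --- restrict to a sieve already known to cover, then invoke separatedness --- is the technical heart, and it is exactly where the three axioms defining $\cT$ feed back on one another; nothing about it is special to the $k$-linear setting.

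For the bijection, set $\mathcal{S}=\{G\in\PSh\ba\mid\tS G=0\}$ for a given localisation. Exactness of $\tS$ shows $\mathcal{S}$ is closed under subobjects, quotients and coproducts, and under extensions because any extension of an object of $\mathcal{S}$ by a $\tI M$ splits once $\tS$ is applied; so $\mathcal{S}$ is a localising subcategory, and by the classical localisation theory of Grothendieck categories the essential image $\tI(\bC)$ is recovered as $\{F\mid\Nat(G,F)=0=\mathrm{Ext}^{1}(G,F)\ \forall G\in\mathcal{S}\}$. Using that $\Yon(A)$ is projective in $\PSh\ba$, the exact sequence $0\to R\to\Yon(A)\to\Yon(A)/R\to 0$ shows that $F$ is a $\cT$-sheaf if and only if $\Nat(\Yon(A)/R,F)=0=\mathrm{Ext}^{1}(\Yon(A)/R,F)$ for all $R\in\cT(A)$; since the objects $\Yon(A)/R$ with $R\in\cT(A)$ generate $\mathcal{S}$ as a localising subcategory (every $\tS$-torsion presheaf is a directed union of quotients of coproducts of such objects), this forces $\Sh(\ba,\cT)=\tI(\bC)$. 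Hence passing from a localisation to its topology and then to the corresponding sheaf category returns the localisation. Conversely, the plus construction identifies the covering sieves of a topology $\cT$ with precisely those $R$ for which sheafification inverts $R\hookrightarrow\Yon(A)$ --- one direction is the sheaf condition, the other is read off from the colimit formula for the plus construction at $\Yon(A)/R$ --- so together with the previous identification the topology attached to $\Sh(\ba,\cT)$ is again $\cT$. Since Giraud subcategories are by definition the equivalence classes of localisations, both round trips being identities gives the stated bijection.
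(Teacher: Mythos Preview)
The paper does not prove this theorem: it attributes the result to Borceux--Quinteiro and simply states that it ``can be extracted from \cite[Theorem~1.5]{BQ} and its proof.'' There is therefore no proof in the paper to compare against. What the paper does do, in the subsequent \emph{Sheafification} paragraph~\ref{sheafif}, is recall from \cite[Theorems~4.1 and~4.4]{BQ} the endofunctor $\Sigma$ given by $\Sigma F(X)=\varinjlim_{R\in\cT(X)}\Nat(R,F)$ together with its five key properties; this is exactly your plus construction, so your argument for part~(i) coincides with the approach the paper is implicitly invoking.

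Your sketch is essentially sound, but two points deserve tightening. First, your argument for closure of $\mathcal{S}=\ker\tS$ under extensions is oddly phrased: the relevant statement is that an extension of two objects of $\mathcal{S}$ lies in $\mathcal{S}$, and this is immediate from exactness of $\tS$, with no splitting needed. Second, the claim that the objects $\Yon(A)/R$ with $R\in\cT(A)$ generate $\mathcal{S}$ as a localising subcategory is the genuine content of the bijection and merits one more line: for any $G\in\mathcal{S}$ and any morphism $\Yon(A)\to G$, the image is $\Yon(A)/R$ for some sieve $R$, and exactness of $\tS$ forces $\tS(R\hookrightarrow\Yon(A))$ to be an isomorphism, i.e.\ $R\in\cT(A)$; hence $G$ is covered by such quotients. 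With those clarifications your outline reproduces the standard Borceux--Quinteiro argument that the paper is citing.
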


\begin{remark}\label{RemEnv}
It follows from Theorem~\ref{ThmBQ}, or directly from Definition~\ref{DefT}, that there is a canonical bijection between the set of Grothendieck topologies on a $k$-linear category $\ba$ and its additive envelope.
\end{remark}

\begin{example}\label{Union}
Consider the two Gabriel topologies on $\mZ$ corresponding to the localisations $\mathsf{Mod}_{\mZ[1/p]}$ of $\Ab=\PSh\underline{\mZ}$ for two primes $p$ and observe that (T5) fails on their naive union.
\end{example}

\subsubsection{Sheafification}\label{sheafif} Fix a Grothendieck topology $\cT$ on $\ba$.
 For $X\in \ba$, the partial order $\le$ on $\cT(X)$ given by $R\le R'$ if $R'\subset R$ is directed by (T5). By \cite[Theorem~4.1]{BQ}, we have an endofunctor $\Sigma$ of $\PSh\ba$ such that
\begin{equation}\label{SigmaDL}\Sigma F(X)\;=\; \varinjlim_{R\in\cT(X)}\Nat(R,F),\qquad\mbox{for $F\in\PSh\ba$ and $X\in\ba$}\end{equation}
and by \cite[Theorem~4.4]{BQ} we have $\tI\circ \tS=\Sigma\circ\Sigma$.
There is an obvious natural transformation $\sigma:\id\to \Sigma$, and the unit $\eta:\id\to\tI\circ \tS$ of the adjunction $\tS \dashv \tI$ corresponds to $\sigma_{\Sigma}\circ\sigma:\id\to \Sigma\circ\Sigma$. Other elements of \cite[Theorems~4.1 and~4.4]{BQ} we need are:
\begin{enumerate}[label=(\roman*)]
\item $\Sigma:\PSh\ba\to\PSh\ba$ is left exact;
\item $F\in\PSh\ba$ is a sheaf if and only if $\sigma_F:F\to\Sigma F$ is an isomorphism;
\item $F\in\PSh\ba$ is separated if and only if $\sigma_F:F\to\Sigma F$ is a monomorphism;
\item $\Sigma F$ is separated for every presheaf $F$;
\item $\Sigma F$ is a sheaf, when $F$ is separated.
\end{enumerate}


\section{Linear pretopologies}\label{SecPT}

\subsection{Formal sequences}

\subsubsection{}
We will consider morphisms between formal coproducts of objects in $\ba$. These are actual morphisms in $\PSh\ba$ or in the formal completion of $\ba$ under small coproducts. In other words, {\bf (formal) morphisms} $f:\amalg_{\alpha \in X} A_\alpha\to \amalg_{\beta\in Y} B_\beta$, for families of objects $\{A_\alpha |\alpha\in X\}$ and $\{B_\beta|\beta\in Y\}$ in $\ba$ labelled by sets $X,Y$, are elements 
$$f=(f_{\alpha\beta})\;\in\; \prod_{\alpha\in X} \bigoplus_{\beta\in Y}\ba(A_\alpha,B_\beta).$$ 
To keep notation light we will almost always omit the labelling sets $X,Y$.

The composition $g\circ f$ for $g:\amalg_\beta B_\beta\to\amalg_\gamma C_\gamma$ is defined by  $(g\circ f)_{\alpha\gamma}=\sum_\beta g_{\beta\gamma}\circ f_{\alpha\beta}$, where by definition the sum is finite. For a fixed $\alpha$, we have $(g\circ f)_{\alpha\gamma}=0$ for all but finitely many $\gamma$, so $g\circ f$ is again a formal morphism.


For a (formal) morphism $v=(v_\alpha):\amalg_\alpha V_\alpha\to A$, we denote the sieve on $A$ generated by all $v_\alpha$ by $R_v\subset \ba(-,A)$. Clearly every sieve can be written in this way.


\subsubsection{} 
We are mainly interested in pairs $(p,q)$ of formal morphisms
\begin{equation}\label{EqSeq}\amalg_\gamma Z_\gamma\;\xrightarrow{p}\; \amalg_\beta Y_\beta\;\xrightarrow{q}\;X,\quad\mbox{with $q\circ p=0$.}\end{equation}
We will refer to them as {\bf (formal) sequences}.

For a regular cardinal $\kappa$, we call formal morphisms and sequences where the labelling sets in the coproducts are strictly bounded by $\kappa$  in cardinality, {\bf$\kappa$-bounded}. Similarly, a class of sequences is called $\kappa$-bounded if every sequence it contains is $\kappa$-bounded. 

The sequence \eqref{EqSeq} is {\bf right exact} if the induced sequence in $\Mod$
$$\prod_\gamma \ba(Z_\gamma,A)\;\leftarrow\; \prod_\beta \ba(Y_\beta,A)\;\leftarrow\;\ba(X,A)\;\leftarrow\;0$$
is exact for every $A\in\ba$. Similarly we say that $q$ is an {\bf epimorphism} if $ \ba(X,A)\to \prod_\beta \ba(Y_\beta,A)$ is injective for all $A$.
These notions of exactness do {\bf not} correspond to exactness in $\PSh\ba$. If the sequence is not formal ({\it i.e.} it is 2-bounded), we recover the ordinary notion of cokernels and epimorphisms in~$\ba$, which differ from the ones in $\PSh\ba$.

\subsubsection{}

For a class $\cS$ of sequences \eqref{EqSeq},
we denote by $\Co(\cS)$ the class of morphisms $q$ which appear on the right of the sequences. 
Next, set ${\Co}^1(\cS)=\Co(\cS)$ and denote by ${\Co}^0(\cS)$ the class of all identity morphisms in $\ba$.
For $i>1$ we define ${\Co}^i(\cS)$ iteratively as follows. If $v:\amalg_\alpha V_\alpha\to X$ is in ${\Co}^{i-1}(\cS)$ and for each $\alpha$ we have some $w(\alpha):\amalg_{\delta\in D(\alpha)} W_\delta(\alpha)\to V_\alpha$ in $\Co(\cS)$, then the collection of morphisms $v_\alpha\circ w(\alpha)_\delta$  form a formal morphism $\amalg_\alpha \amalg_{\delta \in D(\alpha)}W_{\delta}(\alpha)\to X$ which is in ${\Co}^i(\cS)$.
We also set 
$$\widetilde{\Co}(\cS)=\bigcup_{i\in\mN}{\Co}^i(\cS).$$

For $X\in\ba$, we denote by $\Co_X(\cS)\subset\Co(\cS)$ and $\widetilde{\Co}_X(\cS)\subset\widetilde{\Co}(\cS)$ the subclasses of morphisms with target $X$.

We also let $\Sh_{\cS}\ba$ be the full subcategory of $\PSh\ba$ of all $F$ for which
$$0\;\to\; F(X)\;\to\;\prod_\beta F(Y_\beta)\;\to \; \prod_\gamma F(Z_\gamma)$$
is exact in $\Mod$ for every $\amalg_\gamma Z_\gamma\to\amalg_\beta Y_\beta\to X$ in $\cS$.

\begin{lemma}\label{LemReflComp}
Let $\cS$ be a $\kappa$-bounded class of sequences \eqref{EqSeq}, for an infinite regular cardinal~$\kappa$. Consider the inclusion $\tI:\Sh_{\cS}\ba\hookrightarrow\PSh\ba$.
\begin{enumerate}[label=(\roman*)]
\item For every functor $J:\bj\to\Sh_{\cS}\ba$ from a $\kappa$-filtered category $\bj$, the colimit $\colim J$ exists and the canonical morphism $ \colim(\tI\circ J)\to\tI(\colim J)$ is an isomorphism.
\item The inclusion $\tI$ is reflective.
\item The image of $\tZ:\ba\to\Sh_{\cS}\ba$ consists of $\kappa$-compact objects.
\end{enumerate}
\end{lemma}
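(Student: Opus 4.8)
The plan is to prove the three parts in the stated order, using (i) as the engine for both (ii) and (iii). For (i): colimits in $\PSh\ba=[\ba^{\op},\Mod]$ are computed objectwise, so put $L:=\colim(\tI\circ J)$, taken in $\PSh\ba$, and fix a sequence $\amalg_\gamma Z_\gamma\to\amalg_\beta Y_\beta\to X$ of $\cS$. As $\cS$ is $\kappa$-bounded, the index sets $\{\beta\}$ and $\{\gamma\}$ have cardinality $<\kappa$, so in $\Mod$ the products $\prod_\beta$ and $\prod_\gamma$ commute with the $\kappa$-filtered colimit over $\bj$ --- the fact about $\kappa$-small limits versus $\kappa$-filtered colimits already used at the end of the proof of Lemma~\ref{LemCompact}. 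Hence
\[
0\to L(X)\to\prod_\beta L(Y_\beta)\to\prod_\gamma L(Z_\gamma)
\]
is the $\kappa$-filtered colimit of the corresponding sequences for the $\tI\circ J(j)$, each of which is exact since $J(j)\in\Sh_{\cS}\ba$; filtered colimits are exact in $\Mod$, so the sequence is exact and $L\in\Sh_{\cS}\ba$. Since $\tI$ is fully faithful, $L$ (with the restriction of its $\PSh\ba$-cocone) is a colimit of $J$ in $\Sh_{\cS}\ba$, and the canonical comparison $\colim(\tI\circ J)\to\tI(\colim J)$ is the identity, hence an isomorphism.

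For (ii): since $\ba$ is essentially small and $\cS$ is $\kappa$-bounded, the $\cS$-sheaf condition only depends on the isomorphism classes of a \emph{set} of sequences, so we may assume $\cS$ is a set. For $s\colon\amalg_\gamma Z_\gamma\xrightarrow{p}\amalg_\beta Y_\beta\xrightarrow{q}X$ in $\cS$, let $C_s:=\coker(\oplus_\gamma\Yon Z_\gamma\to\oplus_\beta\Yon Y_\beta)$ in $\PSh\ba$ and $h_s\colon C_s\to\Yon X$ the morphism induced by $q$ (it kills the image of $p$ because $qp=0$). Left exactness of $\Nat(-,F)$ turns the defining presentation of $C_s$ into $\Nat(C_s,F)=\ker(\prod_\beta F(Y_\beta)\to\prod_\gamma F(Z_\gamma))$, and one then checks that $F$ satisfies the $\cS$-sheaf condition for $s$ precisely when $\Nat(h_s,F)$ is a bijection. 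Thus $\Sh_{\cS}\ba$ is the full subcategory of objects orthogonal to the set $\{h_s:s\in\cS\}$ of morphisms of the locally presentable category $\PSh\ba$ (it is Grothendieck by \ref{DefGen}), and such small orthogonality classes are reflective; this yields the left adjoint $\tS$. (Alternatively, the same objectwise bookkeeping as in (i) shows $\Sh_{\cS}\ba$ is closed under limits in $\PSh\ba$, so by (i) it is a limit- and $\kappa$-filtered-colimit-closed full subcategory of a locally presentable category, which is again enough for reflectivity.)

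For (iii): with $\tZ=\tS\circ\Yon$, the adjunction $\tS\dashv\tI$ and the Yoneda lemma give, for $M\in\Sh_{\cS}\ba$, a natural isomorphism $\Sh_{\cS}\ba(\tZ A,M)\cong\PSh\ba(\Yon A,\tI M)\cong(\tI M)(A)$, i.e.\ $\Sh_{\cS}\ba(\tZ A,-)\cong\ev_A\circ\tI$. The evaluation $\ev_A\colon\PSh\ba\to\Mod$ preserves all colimits, and by (i) the inclusion $\tI$ preserves $\kappa$-filtered colimits, so $\Sh_{\cS}\ba(\tZ A,-)$ preserves $\kappa$-filtered colimits, i.e.\ $\tZ A$ is $\kappa$-compact.

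The colimit and limit manipulations in $\Mod$ are routine; the only substantive point is the reflectivity in (ii), and the main obstacle is simply to dispatch it cleanly. Invoking the theory of locally presentable categories (reflectivity of small orthogonality classes, or of limit-closed accessibly embedded subcategories) makes it immediate, as above. If one prefers a self-contained argument, the obstacle becomes constructing the reflector by hand --- e.g.\ by a transfinite iteration of a ``plus''-type endofunctor built from the sequences of $\cS$, in the spirit of the functor $\Sigma$ of \ref{sheafif} --- which is more laborious but elementary. I would present the locally presentable version.
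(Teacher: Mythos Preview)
Your proof is correct and follows essentially the same line as the paper's: parts (i) and (iii) are identical, and for (ii) the paper simply invokes the main result of \cite{AR} (reflectivity of limit- and $\kappa$-filtered-colimit-closed full subcategories of a locally presentable category) using (i), which is exactly your parenthetical alternative. Your primary orthogonality-class argument for (ii) is a minor variant that reaches the same conclusion through equivalent locally-presentable-category machinery without needing (i) as input.
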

\begin{proof}
Since $\kappa$-small limits commute with $\kappa$-filtered colimits in $\Mod$, it follows that the presheaf $\colim (\tI\circ J)$ is actually contained in $\Sh_{\cS}\ba$. Part (i) is then a standard consequence. Part (ii) follows from the main result in \cite{AR} by part (i) and the facts that $\PSh\ba$ is locally presentable and $\Sh_{\cS}\ba$ is closed under limits in $\PSh\ba$. For part (iii), we observe that for $X\in\ba$ and a functor $J$ as in (i), we have
$$\Sh_{\cS}\ba(\tZ X,\colim J)\simeq \PSh\ba(\Yon X,\tI(\colim J))\simeq \colim (\tI\circ J)(X)\simeq \colim \Sh_{\cS}\ba(\tZ X,J),$$
which shows that $\tZ(X)$ is $\kappa$-compact.
\end{proof}


\subsection{Pretopologies versus topologies}
We refer to Appendix~\ref{App} for a detailed motivation for how the following is a linear analogue of a canonical generalisation of unenriched Grothendieck pretopologies.
\begin{definition}\label{DefPT} A {\bf $k$-linear Grothendieck pretopology} on $\ba$ is a  class $\cS$ of sequences of the form~\eqref{EqSeq} such that (PTa) and (PTb) are satisfied:
\begin{enumerate}
\item[(PTa)] 
For each  $q:\amalg_\beta Y_\beta\to X $ in $\Co(\cS)$ and morphism $f:A\to X$, there exists $q'\in \widetilde{\Co}_A(\cS)$ which admits a (formal) commutative diagram
$$\xymatrix{
\amalg_\beta Y_\beta\ar[rr]^{q}&& X\\
\amalg_\delta C_\delta \ar@{-->}[rr]^{q'}\ar@{-->}[u]^{f'}&& A\ar[u]^f.
}$$
\item[(PTb)] For every sequence \eqref{EqSeq} in $\cS$ and $f:A\to \amalg_\beta Y_\beta$ with $q\circ f=0$, there exists $q'\in \widetilde{\Co}_A(\cS)$ which admits a (formal) commutative diagram
$$\xymatrix{
\amalg_\gamma Z_\gamma\ar[r]^{p}& \amalg_\beta Y_\beta\ar[r]^{q}&X\\
\amalg_\alpha B_\alpha \ar@{-->}[r]^{q'}\ar@{-->}[u]^{f'}& A\ar[u]^f\ar[ru]_0.
}$$
\end{enumerate}
\end{definition}

\begin{remark}
The following are potential properties of a class $\cS$ of sequences \eqref{EqSeq}:
\begin{enumerate}
\item[(PTa')] For every $r:\amalg_\alpha V_\alpha\to X $ in $\widetilde{\Co}(\cS)$ and $f:A\to X$ in $\ba$, there exists $r'\in \widetilde{\Co}_A(\cS)$ which admits a (formal) commutative diagram
$$\xymatrix{
\amalg_\alpha V_\alpha\ar[rr]^r&& X\\
\amalg_\delta B_\delta \ar@{-->}[rr]^{r'}\ar@{-->}[u]^{f'}&& A\ar[u]^f.
}$$
\item[(PTb')]  For every sequence \eqref{EqSeq} in $\cS$ and $A\in\ba$, the following sequence in $\Mod$ is acyclic:
$$\bigoplus_\gamma \ba(A,Z_\gamma)\;\to\; \bigoplus_\beta \ba(A,Y_\beta)\;\to\;\ba(A,X).$$
\end{enumerate}
It is clear that (PTb') implies (PTb) and (PTa') implies (PTa). Furthermore, as proved below, (PTa) implies (PTa'), so (PTa') is satisfied for every pretopology.
\end{remark}

\begin{lemma}\label{LemPTa}Consider a class $\cS$ of sequences~\eqref{EqSeq} which satisfies {\rm (PTa)}.
\begin{enumerate}[label=(\roman*)]
\item Consider a set $\{\amalg_{\beta\in B(i)} Y_\beta^i\xrightarrow{q^i} X^i\,|\,i\in I\} $ of elements in $\Co(\cS)$.
For a given formal morphism $f=(f_i):A\to \amalg_{i\in I} X^i$, there exists $q'\in \widetilde{\Co}_A(\cS)$ which admits a commutative diagram as in {\rm (PTa)} with each pair $(q^i,f_i)$. 
\item Condition {\rm (PTa')} is satisfied for $\cS$.
\end{enumerate}\end{lemma}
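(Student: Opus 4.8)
The plan is to prove part (i) by a finite induction on the support of $f$, using (PTa) repeatedly and the iterative construction of $\widetilde{\Co}(\cS)$; part (ii) then follows almost immediately.

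For part (i), first note that a (formal) morphism $f=(f_i):A\to\amalg_{i\in I}X^i$ has, by the very definition of morphisms between formal coproducts, only finitely many nonzero components $f_i$; say $f_{i_1},\dots,f_{i_n}$ are the nonzero ones. I would induct on $n$. If $n=0$ then $f=0$ and we may take $q'=\id_A\in\Co^0(\cS)\subset\widetilde{\Co}_A(\cS)$, with the trivial commutative diagram (the zero morphism $\amalg_\emptyset\to A$ works for each pair, or rather the square commutes since both legs through $X^i$ are zero). For the inductive step, apply (PTa) to the pair $(q^{i_1},f_{i_1})$: this yields $q_1:\amalg_\delta C_\delta\to A$ in $\widetilde{\Co}_A(\cS)$ fitting in a commutative square with $(q^{i_1},f_{i_1})$. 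Now consider the composite $f\circ (q_1)_\delta:C_\delta\to\amalg_i X^i$ for each index $\delta$; its $i_1$-component factors through $q^{i_1}$ by construction, but more importantly I will instead restrict attention to the remaining components. The cleaner route: set $g:A\to\amalg_{i\neq i_1}X^i$ to be $f$ with its $i_1$-component deleted; by induction (applied to the collection indexed by $I\setminus\{i_1\}$) there is $q_2:\amalg_\alpha B_\alpha\to A$ in $\widetilde{\Co}_A(\cS)$ with commutative squares for every pair $(q^i,g_i)$, $i\neq i_1$. Then apply (PTa) once more, now to the pair $(q^{i_1}, f_{i_1})$ but pulled back along $q_2$: for each $\alpha$, the morphism $f_{i_1}\circ (q_2)_\alpha:B_\alpha\to X^{i_1}$ gives, by (PTa), some $r(\alpha)\in\widetilde{\Co}_{B_\alpha}(\cS)$ fitting in a square with $(q^{i_1}, f_{i_1}\circ (q_2)_\alpha)$. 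Composing $q_2$ with the $r(\alpha)$ produces, by the iterative definition of $\Co^i(\cS)$, a morphism $q'\in\widetilde{\Co}_A(\cS)$; and one checks that $q'$ then admits the required commutative square with each pair $(q^i,f_i)$ for $i\in\{i_1,\dots,i_n\}$ — for $i=i_1$ via the last application of (PTa), and for $i\neq i_1$ because $q'$ factors through $q_2$.

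For part (ii), let $r:\amalg_\alpha V_\alpha\to X$ lie in $\widetilde{\Co}(\cS)$, so $r\in\Co^i(\cS)$ for some $i$, and let $f:A\to X$. I would induct on $i$. For $i=0$, $r$ is an identity and $r'=f$ works. For $i=1$ this is exactly (PTa). For the inductive step, write $r$ as $v_\beta\circ w(\beta)_\delta$ with $v:\amalg_\beta V'_\beta\to X$ in $\Co^{i-1}(\cS)$ and each $w(\beta):\amalg_\delta W_\delta(\beta)\to V'_\beta$ in $\Co(\cS)$. By the inductive hypothesis applied to $(v,f)$ there is $s:\amalg_\mu B_\mu\to A$ in $\widetilde{\Co}_A(\cS)$ with a commutative square and a lift $f':\amalg_\mu B_\mu\to\amalg_\beta V'_\beta$. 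For each $\mu$, the component $f'_\mu:B_\mu\to\amalg_\beta V'_\beta$ has finite support, and applying part (i) to the family $\{w(\beta)\}_\beta$ of elements of $\Co(\cS)$ with the morphism $f'_\mu$ yields $t(\mu)\in\widetilde{\Co}_{B_\mu}(\cS)$ fitting in commutative squares with each relevant $(w(\beta), (f'_\mu)_\beta)$. Composing $s$ with the $t(\mu)$ gives the desired $r'\in\widetilde{\Co}_A(\cS)$, and pasting the squares gives the commutative diagram over $r$.

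The main obstacle is purely bookkeeping: keeping track of the nested formal coproducts and checking that the pasted diagrams genuinely commute as morphisms between formal coproducts (recall composition of formal morphisms involves the finiteness constraints spelled out in \S\ref{SecPT}). The key simplification is the finite-support property of formal morphisms, which makes both inductions finite; I expect part (i) to require the most care, precisely in handling the interaction between the two successive applications of (PTa) so that the single output $q'$ serves all pairs simultaneously rather than one at a time.
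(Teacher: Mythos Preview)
Your proposal is correct and follows essentially the same approach as the paper: reduce part (i) to a finite index set via the finite-support property of formal morphisms and induct on its size, applying (PTa) at the new index and composing with the inductively obtained element of $\widetilde{\Co}_A(\cS)$; then derive (PTc) by induction on the level $i$ in $\Co^i(\cS)$, invoking part (i) at each step (the paper compresses this into the single phrase ``Part (i) allows us to prove (PTc) iteratively''). One small slip: in your base case $i=0$ for part (ii) you wrote ``$r'=f$ works'', but you mean $r'=\id_A$ with lift $f'=f$.
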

\begin{proof}
By the definition of formal morphisms $f:A\to \amalg_i X^i$, it is sufficient to consider finite sets $I$ in (i). We then prove (i) by induction on $|I|$. The base case $|I|=1$ is precisely (PTa). Now assume that we have proved the claim for  $\{q^i:\amalg_\beta Y_\beta^i\to X^i\,|\,1\le i<n\} $ and consider $q^n:\amalg_\beta Y_\beta^n\to X^n$ in $\Co(\cS)$. By assumption, we have a commutative diagram
$$\xymatrix{
\amalg_{i<n}\amalg_\beta Y_\beta^i\ar[rr]^{\amalg_{i<n} q^i}&& \amalg_{i<n} X^i\\
\amalg_\delta C_\delta \ar@{-->}[rr]^{q''}\ar@{-->}[u]&& A\ar[u]\ar[rr]^{f_n}&& X^n&&\amalg_\beta Y^n_\beta\ar[ll]_{q^n}
}$$
for some $q''\in \widetilde{\Co}_A(\cS)$. We can now apply (PTa) to $q^n$ and each of the $f^n\circ q''_\delta:C_\delta\to X^n$ to get a suitable element of $\widetilde{\Co}_{C_\delta}(\cS)$ which we compose with $q''$ to find the desired element of  $\widetilde{\Co}_A(\cS)$.

Part (i) allows us to prove part (ii) iteratively. Indeed, by assumption, (PTa') is satisfied for every $r\in \Co^1(\cS)$. Assume that it is satisfied for a given $r:\amalg_\alpha V_\alpha\to X\in \Co^i(\cS)$ and construct an element in $\Co^{i+1}(\cS)$ by considering a morphism $\amalg_{\beta \in B(\alpha)} W_\beta(\alpha) \to V_\alpha$ in $\Co(\cS)$ for each $\alpha$. Starting from the diagram in (PTa') for $r$, we can now apply part (i) to each $B_\delta\to \amalg_\alpha V_\alpha$ and the collection $\{\amalg_{\beta} W_\beta(\alpha) \to V_\alpha|\alpha\}$.
\end{proof}

The class of pretopologies is canonically ordered with respect to inclusion. It follows immediately from the definition that for a family of pretopologies $\{\cS_i\}$, the class of sequences $\cup_i\cS_i$ is again a pretopology. However, for pretopologies $\cS_1$ and $\cS_2$, the class of sequences $\cS_1\cap\cS_2$ need not be a pretopology. This behaviour is dual to that of topologies, see \ref{DefMeet}. We will exploit this in Theorems~\ref{ThmTPT}(v) and~\ref{ThmLRS} below.


\subsubsection{}
For a ($k$-linear) pretopology $\cS$ we consider the covering system $\topp(\cS)$ of all sieves $R\subset\ba(-,X)$ which contain $R_r$ for some $r\in\widetilde{\Co}_X(\cS)$.

For a ($k$-linear) topology $\cT$ on $\ba$, we denote by $\preo(\cT)$ the class of all formal sequences \eqref{EqSeq} for which the associated sequence
\begin{equation}\label{eqYonseq}\bigoplus_\gamma\Yon(Z_\gamma)\;\to\;\bigoplus_\beta\Yon(Y_\beta)\;\to\; \Yon(X)\end{equation}
is acyclic in $\PSh\ba$ such that the image of the right morphism is an element in $\cT(X)$. 
The class $\preo(\cT)$ has the advantage of being defined directly from $(\ba,\cT)$. However, we will also need the class $\pre(\cT)$ of all formal sequences~\eqref{EqSeq} for which the induced sequence
$$\bigoplus_\gamma\tZ(Z_\gamma)\to\bigoplus_{\beta}\tZ(Y_\beta)\to \tZ(X)\to 0$$
is acyclic in $\Sh(\ba,\cT)$. By Theorem~\ref{ThmBQ}, we have $\preo(\cT)\subset\pre(\cT)$.

More generally, consider an exact cocontinuous functor $\Theta:\PSh\ba\to\bC$ to a cocomplete abelian category $\bC$. We denote by $\cS(\Theta)$ the class of formal sequences~\eqref{EqSeq}, for which $\Theta$ sends \eqref{eqYonseq} to a right exact sequence in $\bC$. We thus have $\pre(\cT)=\cS(\tS)$.

\begin{theorem}\label{ThmTPT}Let $\cT$ be a ($k$-linear) topology and $\cS$ a ($k$-linear) pretopology on $\ba$.
\begin{enumerate}[label=(\roman*)]
\item The classes of sequences $\pre(\cT)$ and $\pre'(\cT)$ are pretopologies.
\item The covering system $\topp(\cS)$ is a topology.
\item We have $\topp(\preo(\cT))=\cT=\topp(\pre(\cT))$.
\item The operations $\preo$ and $\topp$ are order (inclusion) preserving.
\item For a family of pretopologies $\{\cS_i\,|\, i\in I\}$, set $\cT_i=\topp \cS_i$. Then $\vee_{i\in I}\cT_i=\topp(\cup_{i\in I}\cS_i)$, so in particular 
$$\vee_{i\in I}\cT_i=\topp(\cup_{i\in I}\pre(\cT_i)).$$
\end{enumerate}
\end{theorem}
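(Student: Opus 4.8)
The plan is to prove the five assertions roughly in the order (ii) $\to$ (i) $\to$ (iii) $\to$ (iv) $\to$ (v), since later parts rely on earlier ones. For (ii), I would verify (T1)--(T3) for $\topp(\cS)$ directly. Axiom (T1) is immediate because $\id_X\in\Co^0(\cS)\subset\widetilde{\Co}_X(\cS)$ generates the maximal sieve. Axiom (T2) is exactly condition (PTc), which holds for every pretopology by Lemma~\ref{LemPTa}(ii): given $R\supset R_r$ with $r\in\widetilde{\Co}_X(\cS)$ and $f:A\to X$, applying (PTc) yields $r'\in\widetilde{\Co}_A(\cS)$ factoring through $r$ via $f$, and one checks $R_{r'}\subset f^{-1}R_r\subset f^{-1}R$. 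For the transitivity axiom (T3), given $R\in\cT(X)$ and a sieve $S$ with $f^{-1}S\in\cT(X_f)$ for all $f\in R$, pick $r=(r_\delta):\amalg_\delta V_\delta\to X$ in $\widetilde{\Co}_X(\cS)$ with $R_r\subset R$; each $r_\delta\in R$, so there is $w(\delta)\in\widetilde{\Co}_{V_\delta}(\cS)$ with $R_{w(\delta)}\subset r_\delta^{-1}S$; composing gives $r\circ(\amalg_\delta w(\delta))\in\widetilde{\Co}_X(\cS)$ whose associated sieve lies in $S$, so $S\in\topp(\cS)$. The key technical point is that $\widetilde{\Co}(\cS)$ is closed under the two-step composition used here, which is exactly how $\widetilde{\Co}$ was defined.

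For (i), the class $\pre(\cT)=\cS(\tS)$ consists of sequences sent to right-exact sequences by $\tS$, and similarly $\preo(\cT)$ by $\Yon$ followed by sheafification; since $\tS$ is exact and cocontinuous, (PTa) and (PTb) reduce to statements about lifting epimorphisms and elements of kernels in the Grothendieck category $\Sh(\ba,\cT)$ (resp.\ $\PSh\ba$) along generating families. Concretely, for (PTa) one uses that the sheafifications $\tZ(X)$ for $X\in\ba$ form a generating family, so any $f:A\to X$ and any epimorphism $q$ onto $\tZ(X)$ admit, after pullback, an epimorphism from a coproduct of $\tZ(C_\delta)$; one then needs this coproduct-epimorphism to itself be (up to the sieve it generates) in $\widetilde{\Co}_A(\cS)$, which follows because $\cT=\topp(\pre(\cT))$ (proved in (iii), or one arranges the logical order so that the relevant half of (iii) is available). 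For (PTb), given $f:A\to\oplus Y_\beta$ with $q\circ f=0$ in the sheaf category, $f$ factors through $\ker q=\im p$; covering the domain of $p$ appropriately and pulling back yields the required $q'\in\widetilde{\Co}_A(\cS)$. I would be slightly careful about the circular-looking dependence between (i) and (iii) and present the argument so that $\topp(\pre(\cT))\subseteq\cT$ and the generation statements needed for (i) are established first from the definitions of $\Sigma$ and sheafification in~\ref{sheafif}.

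Part (iii) is the heart. The inclusion $\topp(\preo(\cT))\subseteq\topp(\pre(\cT))$ holds because $\preo(\cT)\subseteq\pre(\cT)$ (a sequence exact in $\PSh\ba$ with covering image is sent by the exact functor $\tS$ to a right-exact sequence). For $\topp(\pre(\cT))\subseteq\cT$: if $r\in\widetilde{\Co}_X(\pre(\cT))$, then unwinding the iterative definition of $\widetilde{\Co}$ and using that each single-step $\Co$-morphism comes from a sequence in $\pre(\cT)$ — hence is a $\tZ$-epimorphism, i.e.\ generates a sieve on which $\tS$ is an isomorphism — one shows by induction on $i$ that $R_r\in\cT(X)$, using axiom (T3) of $\cT$ at each composition step and Theorem~\ref{ThmBQ}(ii)(b). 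Finally $\cT\subseteq\topp(\preo(\cT))$: given $R\in\cT(X)$, write $R=R_v$ for $v:\amalg_\alpha V_\alpha\to X$; form the exact sequence $\oplus\Yon(V_\alpha)\to\Yon(X)$ with image the subsheaf-generating $R$, extend it on the left by a presentation of the kernel (a coproduct of representables, using that $\PSh\ba$ is generated by representables), obtaining a sequence in $\preo(\cT)$ whose right map has image $R$; hence $R\in\topp(\preo(\cT))$. I expect this last construction — producing an honest sequence \eqref{EqSeq} in $\preo(\cT)$ realising a prescribed covering sieve, and the inductive bookkeeping that $\widetilde{\Co}$ of a pretopology of the form $\pre(\cT)$ stays inside $\cT$ — to be the main obstacle, essentially because it is where the formal-coproduct formalism must be matched precisely against sieves.

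Part (iv) is then formal: if $\cT\subseteq\cT'$ then every sequence exact in $\PSh\ba$ with image in $\cT(X)$ has image in $\cT'(X)$, so $\preo(\cT)\subseteq\preo(\cT')$; and if $\cS\subseteq\cS'$ then $\widetilde{\Co}_X(\cS)\subseteq\widetilde{\Co}_X(\cS')$ so $\topp(\cS)\subseteq\topp(\cS')$. For (v): by monotonicity $\topp(\cup_i\cS_i)\supseteq\topp(\cS_i)=\cT_i$ for each $i$, so $\topp(\cup_i\cS_i)\supseteq\vee_i\cT_i$. Conversely, $\cup_i\cS_i$ is a pretopology (noted before the theorem), and by (iii) applied to it, $\topp(\cup_i\cS_i)=\topp(\pre(\topp(\cup_i\cS_i)))$; but any $r\in\widetilde{\Co}_X(\cup_i\cS_i)$ is built by iterated composition from $\Co$-morphisms each belonging to some single $\cS_i$, hence generating a sieve in $\cT_i\subseteq\vee_j\cT_j$, and the transitivity axiom (T3) for the topology $\vee_j\cT_j$ shows the composite generates a sieve in $\vee_j\cT_j$; therefore $\topp(\cup_i\cS_i)\subseteq\vee_i\cT_i$. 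The displayed identity $\vee_{i}\cT_i=\topp(\cup_i\pre(\cT_i))$ follows by combining this with (iii): $\topp(\cup_i\pre(\cT_i))$ sits between $\topp(\cup_i\preo(\cT_i))$ — which already refines each $\cT_i$ by (iii), hence contains $\vee_i\cT_i$ — and, by the same transitivity argument, is contained in $\vee_i\cT_i$.
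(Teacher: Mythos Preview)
Your outline is essentially correct and follows the same overall shape as the paper, but you are making parts (i) and (iii) harder than they need to be, and the perceived circularity you flag is illusory.

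The paper organises the argument around two short preparatory lemmas. The first (Lemma~\ref{LemTPT}) shows that for \emph{any} class $\cS$ of sequences and any topology $\cT$, one has $\topp(\cS)\subset\cT$ if and only if $R_q\in\cT(X)$ for every $q\in\Co_X(\cS)$; the induction on $i$ in $\Co^i$ you sketch for (iii) and again for (v) is exactly this lemma, proved once. The second (Lemma~\ref{LemEpiLoc}) shows that for any exact cocontinuous $\Theta:\PSh\ba\to\bC$, the class $\cS(\Theta)$ is a pretopology, with the crucial observation $\Co(\cS(\Theta))=\widetilde{\Co}(\cS(\Theta))$: the $\Theta$-epimorphisms are already closed under composition. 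Applied to $\Theta=\tS$ this handles $\pre(\cT)$ in one stroke. In particular, the $q'$ you construct in (PTa) is a $\tZ$-epimorphism, hence lies in $\Co_A(\pre(\cT))$ \emph{by definition of $\pre(\cT)$}; you do not need part (iii) for this, and there is no circularity to manage. Your remark ``which follows because $\cT=\topp(\pre(\cT))$'' is a misreading of what membership in $\Co(\pre(\cT))$ requires.

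Two further points. First, your treatment of $\preo(\cT)$ is conflated with that of $\pre(\cT)$: the sentence ``similarly $\preo(\cT)$ by $\Yon$ followed by sheafification'' describes $\pre(\cT)$, not $\preo(\cT)$. The paper treats $\preo(\cT)$ separately and more simply: (PTa) is literally (T2) (since $q\in\Co(\preo(\cT))$ means $R_q\in\cT$, and $f^{-1}R_q\in\cT$ gives the required $q'$), while (PTb$'$) holds because the defining exactness is already in $\PSh\ba$. Second, once Lemma~\ref{LemTPT} is in hand and one notices that $q\in\Co(\preo(\cT))\Leftrightarrow R_q\in\cT(X)\Leftrightarrow q\in\Co(\pre(\cT))$, part (iii) is genuinely immediate from (T4); you do not need the inclusion chain $\topp(\preo(\cT))\subseteq\topp(\pre(\cT))\subseteq\cT\subseteq\topp(\preo(\cT))$ argued in three separate steps. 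Part (v) then follows from Lemma~\ref{LemTPT} in two lines, rather than by re-running the (T3) induction.
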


\begin{remark}\label{RemTPT}
\begin{enumerate}[label=(\roman*)]
\item As is immediate from the proof below, the observation in \ref{ThmTPT}(ii) that $\topp(\cS)$ is a Grothendieck topology only requires property (PTa) and not (PTb).
\item That $\cT\mapsto \pre(\cT)$ is also order preserving follows from Corollary~\ref{CorPOEquiv} below.
\end{enumerate}
\end{remark}

We start the proof of the theorem with the following lemma. 

\begin{lemma}\label{LemTPT}For a topology $\cT$ and a pretopology $\cS$, we have $\topp(\cS)\subset \cT$ if and only if $R_q\in\cT(X)$ for every $q\in \Co_X(\cS)$ and $X\in\ba$.
\end{lemma}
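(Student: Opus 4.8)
The statement to prove is: for a topology $\cT$ and a pretopology $\cS$, we have $\topp(\cS)\subset\cT$ if and only if $R_q\in\cT(X)$ for every $q\in\Co_X(\cS)$ and $X\in\ba$. Let me think about what each direction needs.

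The forward direction is essentially immediate from definitions. If $\topp(\cS)\subset\cT$, then pick any $q\in\Co_X(\cS)$. The sieve $R_q$ is a sieve on $X$, and since $q\in\Co_X(\cS)\subset\widetilde{\Co}_X(\cS)$ (as $\Co^1(\cS)=\Co(\cS)$), we have $R_q\supseteq R_q$ trivially, so $R_q\in\topp(\cS)(X)$ by the very definition of $\topp(\cS)$ as the covering system of all sieves containing $R_r$ for some $r\in\widetilde{\Co}_X(\cS)$. Hence $R_q\in\cT(X)$.

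The reverse direction is the substantive one. Assume $R_q\in\cT(X)$ for all $q\in\Co_X(\cS)$ and all $X$. I must show that every sieve in $\topp(\cS)(X)$ lies in $\cT(X)$. By property (T4), it suffices to show $R_r\in\cT(X)$ for every $r\in\widetilde{\Co}_X(\cS)$, since any sieve in $\topp(\cS)(X)$ contains such an $R_r$. Now $\widetilde{\Co}(\cS)=\bigcup_i \Co^i(\cS)$, so I proceed by induction on $i$. For $i=0$, $r$ is an identity morphism, so $R_r=\ba(-,X)\in\cT(X)$ by (T1). For $i=1$ this is exactly the hypothesis. For the inductive step, suppose $r\in\Co^i(\cS)$ arises as in the iterative definition: there is $v:\amalg_\alpha V_\alpha\to X$ in $\Co^{i-1}(\cS)$ with each $v_\alpha:V_\alpha\to X$, and for each $\alpha$ a morphism $w(\alpha):\amalg_\delta W_\delta(\alpha)\to V_\alpha$ in $\Co(\cS)$, and $r$ is assembled from the composites $v_\alpha\circ w(\alpha)_\delta$. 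By induction $R_v\in\cT(X)$. The key observation is that $R_r$ is related to $R_v$ via the pullback/local-character axioms: for each $\alpha$ and each generator $v_\alpha\in R_v(V_\alpha)$, the pulled-back sieve $v_\alpha^{-1}R_r$ contains the sieve on $V_\alpha$ generated by the $w(\alpha)_\delta$, i.e.\ contains $R_{w(\alpha)}$, which is in $\cT(V_\alpha)$ by the hypothesis (since $w(\alpha)\in\Co(\cS)$); hence $v_\alpha^{-1}R_r\in\cT(V_\alpha)$ by (T4). More generally one checks that for an \emph{arbitrary} $f\in R_v(B)$ — not just the generators — one still has $f^{-1}R_r\in\cT(B)$, using (T2) applied to pulling back along the factorization of $f$ through some $v_\alpha$ (up to finite sums, which is handled since sums of morphisms landing in a common sieve stay in that sieve, and (T5) lets us intersect). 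Then axiom (T3) applied to the pair $R_r\subset\ba(-,X)$ and $R_v\in\cT(X)$ yields $R_r\in\cT(X)$.

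The main obstacle I anticipate is the bookkeeping in the inductive step: making precise the claim that $v_\alpha^{-1}R_r\supseteq R_{w(\alpha)}$ at the level of formal morphisms, and more importantly handling a general element $f\in R_v(B)$ rather than just the chosen generators $v_\alpha$. Since $R_v$ is the sieve \emph{generated} by the $v_\alpha$, an element $f\in R_v(B)$ is a finite sum $\sum_k g_k\circ v_{\alpha_k}$ with $g_k\in\ba(B,V_{\alpha_k})$; one then needs $f^{-1}R_r$ to be covering, which follows because each $(g_k\circ v_{\alpha_k})^{-1}R_r = g_k^{-1}(v_{\alpha_k}^{-1}R_r)\supseteq g_k^{-1}R_{w(\alpha_k)}\in\cT(B)$ by (T2), and a sieve containing a finite intersection (via (T5)) of covering sieves and on which $f$ restricts appropriately is covering by (T4). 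This linearity subtlety — that sieves are subgroups, so one must argue about sums of morphisms — is exactly the point where the $k$-linear setting differs from the classical one, and it is where I would be most careful. Once that is in place, (T3) closes the induction and the lemma follows.
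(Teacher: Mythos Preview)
Your proof is correct and follows essentially the same approach as the paper's: reduce via (T4) to showing $R_r\in\cT(X)$ for $r\in\widetilde{\Co}_X(\cS)$, induct on the level $i$, and in the inductive step write an arbitrary $f\in R_v(B)$ as a finite sum over the generators $v_\alpha$, then use (T2), (T5), (T4), and (T3) exactly as the paper does. (One typo to fix: your finite-sum expression should read $\sum_k v_{\alpha_k}\circ g_k$, not $\sum_k g_k\circ v_{\alpha_k}$, consistent with your own expansion $g_k^{-1}(v_{\alpha_k}^{-1}R_r)$.)
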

\begin{proof}
One direction is obvious. To prove the other direction, we assume that $R_q\in\cT(X)$ for every $q\in \Co(\cS)$. By (T4) it suffices to show that  $R_r\in\cT(X)$ for every $r\in \widetilde{\Co}_X(\cS)$. We prove this by induction on $i$ in $\widetilde{\Co}(\cS)=\cup_i\Co^i(\cS)$, where $i=0$ is fine by (T1) and $i=1$ is fine by assumption. Consider $v: \amalg_\alpha V_\alpha\to X$ in $\Co^i(\cS)$, and for each $\alpha$ we take $q(\alpha)$ in $\Co_{ V_\alpha}(\cS)$. We assume that $R_v\in \cT(X)$ and we need to show that $R_t\in \cT(X)$ for $t=v\circ\amalg_\alpha q(\alpha)$.

Take therefore arbitrary $B\in\ba$ and $f\in R_v(B)$. We have $f=\sum_\alpha v_\alpha \circ f_\alpha$ (finite sum) for certain $f_\alpha: B\to V_\alpha$. We have $\cap_\alpha f_\alpha^{-1}R_{q(\alpha)}\subset f^{-1}R_t$. Since $f_\alpha=0$ for all but finitely many $\alpha$, by (T2) and (T5) we find $\cap_\alpha f_\alpha^{-1}R_{q(\alpha)}\in \cT(B)$, so by (T4) $f^{-1}R_t\in\cT(B)$. That $R_t\in \cT(X)$ thus follows from (T3).
\end{proof}

\begin{lemma}\label{LemEpiLoc}
Consider an exact cocontinuous functor $\Theta:\PSh\ba\to \bC$ to a cocomplete abelian category $\bC$ and set $u:=\Theta\circ\Yon:\ba\to\bC$.

\begin{enumerate}[label=(\roman*)]
\item The class $\Co(\cS(\Theta))=\widetilde{\Co}(\cS(\Theta))$ comprises all morphisms $\amalg_\beta Y_\beta\to X$ for which the induced morphism $\oplus_\beta u(Y_\beta)\to u(X)$ is an epimorphism.
\item The class $\cS(\Theta)$ is a pretopology on $\ba$.
\end{enumerate}
\end{lemma}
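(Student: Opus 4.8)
\textbf{Proof plan for Lemma~\ref{LemEpiLoc}.}

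The plan is to handle (i) first and then deduce (ii) essentially for free, since once we know that $\cS(\Theta)$ has a clean description in terms of the single functor $u$ (equivalently $\Theta$), conditions (PTa) and (PTb) become statements about epimorphisms and kernels in the abelian category $\bC$, which we can verify by lifting along $\Yon$ and using cocontinuity and exactness of $\Theta$.

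For part (i), I would argue as follows. A formal sequence \eqref{EqSeq} lies in $\cS(\Theta)$ precisely when $\Theta$ sends \eqref{eqYonseq} to an exact sequence $\oplus_\gamma u(Z_\gamma)\to\oplus_\beta u(Y_\beta)\to u(X)$ in $\bC$; here I use that $\Theta$ is cocontinuous, so it commutes with the coproducts, and that $\Yon$ followed by $\Theta$ is $u$. In particular, for the right-hand morphism $q$ of a sequence in $\cS(\Theta)$ to be the cokernel map means $\oplus_\beta u(Y_\beta)\to u(X)$ is an epimorphism. Conversely, given any formal morphism $q:\amalg_\beta Y_\beta\to X$ for which $\oplus_\beta u(Y_\beta)\to u(X)$ is an epimorphism in $\bC$, I would produce a sequence in $\cS(\Theta)$ with $q$ on the right: take $\amalg_\gamma Z_\gamma$ to be a formal coproduct of objects of $\ba$ indexed by \emph{all} morphisms $A\to \amalg_\beta Y_\beta$ (with $A\in\ba$) whose composite with $q$ vanishes, i.e. let $\oplus_\gamma\Yon(Z_\gamma)\to\oplus_\beta\Yon(Y_\beta)$ be the canonical epimorphism onto the kernel subpresheaf $K:=\ker(\oplus_\beta\Yon(Y_\beta)\to\Yon(X))$ — such an epimorphism from a coproduct of representables exists since $\ba$ is a generator of $\PSh\ba$. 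Then \eqref{eqYonseq} is exact in $\PSh\ba$ (the image of the left map is $K$, which is the kernel of the right map since $q$ is epi in $\PSh\ba$ — note $\Yon(X)$ being a quotient of a coproduct of representables is automatic, but I must be a little careful: $\oplus_\beta\Yon(Y_\beta)\to\Yon(X)$ need not be epi in $\PSh\ba$ even when $\oplus_\beta u(Y_\beta)\to u(X)$ is; however I only need \eqref{eqYonseq} to become exact after applying $\Theta$). Applying $\Theta$, which is exact and cocontinuous, sends this to $\oplus_\gamma u(Z_\gamma)\to\oplus_\beta u(Y_\beta)\to u(X)$; the first map has image $\Theta(K)$, and $\Theta(K)=\ker(\oplus_\beta u(Y_\beta)\to u(X))$ because $\Theta$ is exact, while surjectivity of $\oplus_\beta u(Y_\beta)\to u(X)$ is our hypothesis. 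Hence the sequence is exact in $\bC$, so it lies in $\cS(\Theta)$, giving $q\in\Co(\cS(\Theta))$. The equality $\Co(\cS(\Theta))=\widetilde{\Co}(\cS(\Theta))$ follows because the class of formal morphisms $\amalg_\beta Y_\beta\to X$ with $\oplus_\beta u(Y_\beta)\to u(X)$ epi is closed under the composition operation defining $\Co^i$: a composite of two such, $v\circ(\amalg_\alpha w(\alpha))$, induces $\oplus_{\alpha,\delta}u(W_\delta(\alpha))\to\oplus_\alpha u(V_\alpha)\to u(X)$, a composite of epimorphisms, hence an epimorphism; so all the $\Co^i(\cS(\Theta))$ for $i\ge 1$ coincide with $\Co^1$.

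For part (ii), I verify (PTa) and (PTb) using (i). For (PTa): given $q\in\Co_X(\cS(\Theta))$ and $f:A\to X$ in $\ba$, I want $q'\in\widetilde{\Co}_A(\cS(\Theta))=\Co_A(\cS(\Theta))$ fitting into the square. By (i) it suffices to find a formal morphism $q':\amalg_\delta C_\delta\to A$ (with $C_\delta\in\ba$) such that $\oplus_\delta u(C_\delta)\to u(A)$ is epi and such that $u(f)\circ(\oplus u(C_\delta)\to u(A))$ factors through $\oplus_\beta u(Y_\beta)\to u(X)$ — actually I need a genuine formal lift $f':\amalg_\delta C_\delta\to\amalg_\beta Y_\beta$ of the square, so I form the pullback in $\bC$ of $u(A)\to u(X)\leftarrow\oplus_\beta u(Y_\beta)$; the projection to $u(A)$ is an epimorphism since $\oplus_\beta u(Y_\beta)\to u(X)$ is. I then present this pullback object as a quotient of a coproduct $\oplus_\delta u(C_\delta)$ of objects in the image of $u$ — here I use that the image of $u$ generates $\bC$, which holds because $\Theta$ is cocontinuous and $\PSh\ba$ is generated by the representables, so $\bC$ is generated by $\{u(A)\mid A\in\ba\}$; lifting the maps $u(C_\delta)\to\oplus_\beta u(Y_\beta)$ along $\Yon$/$\Theta$ is possible after enlarging the index set suitably, since $\Hom_{\bC}(u(C),\oplus_\beta u(Y_\beta))$ receives the image of $\oplus_\beta\ba(C,Y_\beta)$ — \emph{this lifting step is the one place requiring care}, because $\Theta$ need not be full; the standard fix is to re-index so that each generator of the pullback is chosen to come from a single $\ba$-morphism, i.e. take $C_\delta$ ranging over all pairs (object $C\in\ba$, morphism $\varphi:C\to\amalg_\beta Y_\beta$ formal) such that $q\circ\varphi$ factors through $f$ as $f\circ\psi$ for some $\psi:C\to A$, and check this collection still surjects onto the pullback. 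Then $q'=$ the induced $\amalg_\delta C_\delta\to A$ and $f'=$ the induced $\amalg_\delta C_\delta\to\amalg_\beta Y_\beta$ do the job. For (PTb): given a sequence \eqref{EqSeq} in $\cS(\Theta)$ and $f:A\to\amalg_\beta Y_\beta$ with $q\circ f=0$, exactness of $\oplus_\gamma u(Z_\gamma)\to\oplus_\beta u(Y_\beta)\to u(X)$ in $\bC$ means $u(f):u(A)\to\oplus_\beta u(Y_\beta)$ lands in the image of $\oplus_\gamma u(Z_\gamma)$, so I form the pullback of $u(A)\to\oplus_\beta u(Y_\beta)\leftarrow\oplus_\gamma u(Z_\gamma)$, whose projection to $u(A)$ is epi, present it as a quotient of $\oplus_\alpha u(B_\alpha)$ with $B_\alpha\in\ba$ (same generation argument), re-index as above to lift to genuine formal $\ba$-morphisms, and set $q':\amalg_\alpha B_\alpha\to A$ and $f':\amalg_\alpha B_\alpha\to\amalg_\gamma Z_\gamma$; by construction $q\circ f\circ(\text{proj})=0$ and $p\circ f'=f\circ q'$, so the diagram commutes, and $q'\in\Co_A(\cS(\Theta))=\widetilde{\Co}_A(\cS(\Theta))$ by (i) since $\oplus_\alpha u(B_\alpha)\to u(A)$ is epi. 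The main obstacle throughout is the bookkeeping needed to replace morphisms in $\bC$ (obtained from pullbacks and generation) by \emph{formal} morphisms between coproducts of objects of $\ba$ — this is purely a matter of choosing the index sets to be sets of $\ba$-morphisms rather than using abstract generators, and once that device is in place both (PTa) and (PTb) reduce to the epi/kernel facts supplied by exactness and cocontinuity of $\Theta$.
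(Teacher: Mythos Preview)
Your argument is correct, and for part (i) it matches the paper's one-line justification (cover the kernel in $\PSh\ba$ by representables and use that $\Theta$ is exact and cocontinuous; closure of epimorphisms under composition gives $\Co=\widetilde{\Co}$).

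For part (ii) you reach the right construction, but via a detour the paper avoids. You form the pullback in $\bC$, then discover that morphisms $u(C)\to P_{\bC}$ need not lift to formal $\ba$-morphisms, and patch this by re-indexing over data $(C,\varphi,\psi)$ with $q\circ\varphi=f\circ\psi$ in $\ba$. But that re-indexed family is exactly the collection of morphisms from representables into the pullback $P$ of $\oplus_\beta\Yon(Y_\beta)\to\Yon(X)\leftarrow\Yon(A)$ \emph{taken in $\PSh\ba$}. The paper simply starts there: form $P$ in $\PSh\ba$, cover it by some $\oplus_\delta\Yon(C_\delta)\twoheadrightarrow P$ (possible since $\ba$ generates $\PSh\ba$), and observe that by Yoneda each $\Yon(C_\delta)\to P$ already \emph{is} a pair of genuine formal $\ba$-morphisms to $\amalg_\beta Y_\beta$ and to $A$, so no lifting problem ever arises. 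One then applies $\Theta$: right exactness makes $\oplus_\delta u(C_\delta)\to\Theta(P)$ epi, left exactness identifies $\Theta(P)$ with the pullback in $\bC$, and the fact that pullbacks of epimorphisms are epimorphisms in an abelian category finishes (PTa); (PTb) is identical. Working in $\PSh\ba$ from the outset eliminates precisely the ``bookkeeping'' you flag as the main obstacle. (A small wording issue: in your re-indexing you say ``pairs'' and ``for some $\psi$'', but $\psi$ must be part of the index data, not merely exist, since it defines $q'$.)
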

\begin{proof}
Part (i) follows from the fact that $\ba\subset\PSh\ba$ is a generator, and the assumptions on $\Theta$.

Now we prove part (ii). Consider the solid diagram in (PTa). Let $P$ denote the pullback in $\PSh\ba$ of $q$ and $f$. Then $P$ is a quotient of some $\oplus_\delta\Yon(C_\delta)$. Composing the morphisms yields a commutative diagram as requested where it remains to be shown that the induced composite
$$\bigoplus_\delta u(C_\delta)\to \Theta(P)\to u(A)$$
is an epimorphism. That the left morphism is an epimorphism follows from right exactness of $\Theta$. Furthermore, left exactness of $\Theta$ implies that $\oplus_\beta u(Y_\beta)\leftarrow \Theta(P)\to  u(A)$ is a pullback. That $\Theta(P)\to  u(A)$ is an epimorphism therefore follows from the fact that $ u(q)$ was an epimorphism and \cite[Pullback Thm 2.54]{Freyd}. 
We can prove similarly that (PTb) is satisfied.
\end{proof}

\begin{proof}[Proof of Theorem~\ref{ThmTPT}] Part (i) for $\pre(\cT)$ is a special case of Lemma~\ref{LemEpiLoc}(ii).
That (PTa) is satisfied for $\preo(\cT)$ follows immediately from (T2). That (PTb') is satisfied follows from the fact that the exactness in (PTb') is just a reformulation of exactness in $\PSh\ba$. 

Now we prove that $\topp(\cS)$ is a topology. Condition (T1) follows by definition. Now take $R\in \topp(\cS)(X)$. By definition, we have $R_r\subset R$ for some $r\in\widetilde{\Co}_X(\cS)$. Using (PTa') allows us to conclude that for $f:A\to X$ we have $R_{r'}\subset f^{-1}R$ for some $r'\in\widetilde{\Co}_A(\cS)$. Hence (T2) follows. Finally, consider sieves $R,S$ on $X$ as in (T3). By assumption, we have $R_r\subset R$ for some $r:\amalg_\alpha V_\alpha\to X$ in $\widetilde{\Co}(\cS)$ and also $R_{t(\alpha)}\subset r_\alpha^{-1}S$ for some $t(\alpha)\in\widetilde{\Co}_{V_\alpha}(\cS)$, for every $\alpha$. But this means that $R_{r_\alpha\circ t(\alpha)}\subset S$ for every $\alpha$, and consequently $R_{s}\subset S$ for $s=r\circ (\amalg_\alpha t(\alpha))$. Hence $S$ is in $\topp(\cS)$. This concludes the proof of part (ii).

Part (iii) follows immediately from the definitions and property (T4) of $\cT$.
Part (iv) is immediate by construction.

Finally, we prove part (v). By (iv) we have $\cT_j\subset \topp(\cup_i\cS_i)$, for every $j\in I$. Now assume that $\cT_j\subset\cT$ for every $j$. In particular, $R_q\in \cT$ for every $q\in\cup_i \Co(\cS_i)=\Co(\cup_i\cS_i)$. By Lemma~\ref{LemTPT}, we thus find $\topp(\cup_i \cS_i)\subset\cT$, which concludes the proof.
\end{proof}

\begin{corollary}\label{Cor227}
\begin{enumerate}[label=(\roman*)]
\item For pretopologies $\cS_1$ and $\cS_2$ on $\ba$, we have $\topp(\cS_2)\subset\topp(\cS_1)$ if and only if
 for each $\amalg_\beta Y_\beta\to X$ in ${\Co}(\cS_2)$, there exist $r\in \widetilde{\Co}_X(\cS_1)$ with a commutative diagram
$$\xymatrix{
\amalg_\beta Y_\beta\ar[rr]&& X\\
\amalg_\alpha V_\alpha \ar@{-->}[u]\ar[rru]_r.
}$$
\item
Consider a topology $\cT$ and a pretopology $\cS$ on $\ba$ with $\cS\subset\pre\cT$. Then $\topp(\cS)=\cT$ if and only if for each formal morphism $\amalg_\beta Y_\beta\to X$ for which $\oplus_\beta \tZ(Y_\beta)\to Z(X)$ is an epimorphism, there exist $r\in \widetilde{\Co}_X(\cS)$ with a commutative diagram as in (i).\end{enumerate}
\end{corollary}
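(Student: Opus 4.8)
The plan is to deduce both parts from the machinery already in place, chiefly Lemma~\ref{LemTPT} and Theorem~\ref{ThmTPT}(iii), together with the elementary observation that a commutative diagram of the displayed shape is exactly the statement that the sieve $R_r$ generated by $r$ is contained in the sieve $R_q$ generated by $q:\amalg_\beta Y_\beta\to X$. First I would record this translation: given $r=(r_\alpha):\amalg_\alpha V_\alpha\to X$ and $q=(q_\beta):\amalg_\beta Y_\beta\to X$, a dashed fill-in $f':\amalg_\alpha V_\alpha\to\amalg_\beta Y_\beta$ with $q\circ f'=r$ exists if and only if each $r_\alpha$ factors through $q$, i.e. $r_\alpha\in R_q(V_\alpha)$, i.e. $R_r\subset R_q$. (Here one uses that formal morphisms into $\amalg_\beta Y_\beta$ are finitely supported, so each $r_\alpha$ lands in finitely many summands, which is precisely what it means to lie in the sieve generated by the $q_\beta$.)

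For part (i), the ``only if'' direction: assume $\topp(\cS_2)\subset\topp(\cS_1)$. For $q\in\Co_X(\cS_2)$ we have $R_q\in\topp(\cS_2)(X)$ by definition of $\topp$, hence $R_q\in\topp(\cS_1)(X)$, so by the very definition of $\topp(\cS_1)$ there is $r\in\widetilde{\Co}_X(\cS_1)$ with $R_r\subset R_q$; by the translation above this is the desired diagram. For the ``if'' direction: apply Lemma~\ref{LemTPT} with $\cT=\topp(\cS_1)$ and the pretopology $\cS_2$. That lemma reduces $\topp(\cS_2)\subset\topp(\cS_1)$ to checking $R_q\in\topp(\cS_1)(X)$ for every $q\in\Co_X(\cS_2)$; but the hypothesised diagram gives $R_r\subset R_q$ for some $r\in\widetilde{\Co}_X(\cS_1)$, and $R_r\in\topp(\cS_1)(X)$ by definition of $\topp(\cS_1)$, so $R_q\in\topp(\cS_1)(X)$ by (T4). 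This closes part (i).

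For part (ii), I would apply Theorem~\ref{ThmTPT}(iii), which gives $\topp(\pre(\cT))=\cT$, and then invoke part (i) with $\cS_1=\cS$ and $\cS_2=\pre(\cT)$: indeed $\topp(\cS)=\cT=\topp(\pre(\cT))$ holds if and only if $\topp(\cS)\subset\topp(\pre(\cT))$ and $\topp(\pre(\cT))\subset\topp(\cS)$. The second inclusion is automatic here because the hypothesis says every sequence of $\cS$ becomes right exact under $\tZ$, so $\cS\subset\pre(\cT)=\cS(\tS)$ by definition of $\cS(\tS)$, whence $\topp(\cS)\subset\topp(\pre(\cT))=\cT$ by Theorem~\ref{ThmTPT}(iv); wait — this gives the inclusion in the wrong direction, so I should instead argue $\topp(\cS)=\cT$ iff $\cT\subset\topp(\cS)$, and apply part (i) with $\cS_1=\cS$, $\cS_2=\pre(\cT)$ to translate $\topp(\pre(\cT))\subset\topp(\cS)$, i.e. $\cT\subset\topp(\cS)$, into: for each $q\in\Co_X(\pre(\cT))$ there is $r\in\widetilde{\Co}_X(\cS)$ with $R_r\subset R_q$. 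It then remains to identify $\Co_X(\pre(\cT))$ with the class of formal $\amalg_\beta Y_\beta\to X$ for which $\oplus_\beta\tZ(Y_\beta)\to\tZ(X)$ is an epimorphism; this is exactly Lemma~\ref{LemEpiLoc}(i) applied to $\Theta=\tS$ (so $u=\tZ$), noting $\Co=\widetilde{\Co}$ there.

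The main obstacle I anticipate is bookkeeping rather than depth: making sure the direction of each inclusion is tracked correctly through the contravariance built into $\topp$ versus $\preo/\pre$, and checking that ``$\cS$ consists of sequences becoming right exact under $\tZ$'' is precisely the hypothesis needed so that Lemma~\ref{LemEpiLoc}(i) applies to $\tS$ and so that the $\Co$ of $\cS$ embeds appropriately into the $\Co$ of $\pre(\cT)$ — the cleanest route is to phrase (ii) as ``$\cT\subset\topp(\cS)$'' (the reverse inclusion $\topp(\cS)\subset\cT$ being free from $\cS\subset\pre(\cT)$ and monotonicity) and then feed this into part (i).
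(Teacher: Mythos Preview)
Your proposal is correct and follows essentially the same route as the paper: part (i) is unwound from Lemma~\ref{LemTPT} applied to $\cT=\topp(\cS_1)$ together with the translation $R_r\subset R_q$ $\Leftrightarrow$ diagram exists, and part (ii) is reduced to part (i) with $\cS_1=\cS$, $\cS_2=\pre(\cT)$ after observing $\cS\subset\pre(\cT)$ forces $\topp(\cS)\subset\cT$, with Lemma~\ref{LemEpiLoc}(i) supplying the identification of $\Co(\pre(\cT))$. Your mid-proof self-correction on the direction of the free inclusion lands exactly where the paper's argument sits.
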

\begin{proof}
Part (i) is a special case of Lemma~\ref{LemTPT}. 
For part (ii), we have $\topp(\cS)\subset\cT$ by Theorem~\ref{ThmTPT}. It thus suffices to prove that $\cT\subset\topp\cS$ is equivalent to the condition in (ii).
By Lemma~\ref{LemEpiLoc}(i), this
 is the special case of part (i) for $\cS_1=\cS$ and $\cS_2=\pre(\cT)$.
\end{proof}

%

\begin{example}\label{ExamCommut}
For a commutative $k$-algebra $K$ we will denote formal sequences in $\underline{K}$ as sequences in $\mathsf{Mod}_K=\PSh\underline{K}$. 
For a set $\mathbf{x}=\{x_\alpha\in K\}$, consider the sequence
$$s_{\bf x}\;:\quad\bigoplus_{\alpha,\beta|\alpha\not=\beta} K\;\to\;\bigoplus_{\alpha}K\;\to\; K$$
where the right morphisms come from $1\mapsto x_\alpha$ and the left morphisms send $1$ in the $(\alpha,\beta)$-labelled copy of $K$ to $0$ everywhere, except to $x_\beta$ in the $\alpha$-copy of $K$ and to $-x_\alpha$ in the $\beta$-copy. For any collection $E$ of such sets ${\bf x}$, the collection $\{s_{\bf x}\,|\,{\bf x}\in E\}$ is a pretopology. This gives a unified construction of a pretopology for every Gabriel topology on $K$.
\begin{enumerate}[label=(\roman*)]
\item If we take a set $E$ of elements in $K$, then $\topp(\{s_{\{x\}}|x\in E\})$ is the topology corresponding to the localisation $\mathsf{Mod}_{K_E}$ of ${\mathsf{Mod}_K}$.
\item Let $k$ be a field, and $K=k[x,y]$. Then $\topp(\{s_{\{x,y\}}\})$ is the topology corresponding to the localisation $\QCoh\mX$ of $\mathsf{Mod}_{k[x,y]}$ with $\mX=\mA^2\backslash\{0\}$.
\end{enumerate}
\end{example}

\subsection{Sheaves}\label{SecSheaves}

\begin{theorem}\label{ThmSheaves}
For a pretopology $\cS$, set $\cT:=\topp(\cS)$.  Then $\Sh_{\cS}\ba=\Sh(\ba,\topp(\cS))$. So in particular, $\Sh_{\cS}\ba$ is a Grothendieck category. 
\end{theorem}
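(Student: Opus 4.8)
Write $\cT:=\topp(\cS)$. The goal is the pair of inclusions $\Sh(\ba,\cT)\subseteq\Sh_{\cS}\ba$ and $\Sh_{\cS}\ba\subseteq\Sh(\ba,\cT)$; once these hold the final assertion is immediate, since $\cT$ is a topology by Theorem~\ref{ThmTPT}(ii) and $\Sh(\ba,\cT)$ is a localisation of $\PSh\ba$ by Theorem~\ref{ThmBQ}, hence a Grothendieck category. I will use two reformulations. For a formal morphism $v:\amalg_\beta Y_\beta\to X$ the sieve $R_v$ is the image of the induced map $\phi_v:\oplus_\beta\Yon(Y_\beta)\to\Yon(X)$ in $\PSh\ba$; hence $\Nat(R_v,F)$ is the submodule of $\prod_\beta F(Y_\beta)$ of families vanishing on $K_v:=\ker\phi_v$, and the canonical map $F(X)\to\Nat(R_v,F)$ is the corestriction of $F(v):F(X)\to\prod_\beta F(Y_\beta)$ — so it is injective iff $F(v)$ is, and bijective iff moreover every family vanishing on $K_v$ lies in the image of $F(v)$. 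Secondly, for a sequence \eqref{EqSeq} in $\cS$ one has $\im\phi_p\subseteq K_q$ (as $q\circ p=0$), and since $F(p):\prod_\beta F(Y_\beta)\to\prod_\gamma F(Z_\gamma)$ is precomposition with $\phi_p$, membership of $F$ in $\Sh_{\cS}\ba$ says precisely that for each such sequence $F(X)\to\prod_\beta F(Y_\beta)$ is injective with image $\ker(F(p))=\{(s_\beta):(s_\beta)\text{ vanishes on }\im\phi_p\}$. I also record the characterisation: $F$ is a $\cT$-sheaf iff $F(X)\to\Nat(R_r,F)$ is an isomorphism for all $X$ and all $r\in\widetilde{\Co}_X(\cS)$. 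One direction holds since $R_r\in\cT(X)$ by definition of $\topp$; for the converse, given $R\in\cT(X)$ choose $r\in\widetilde{\Co}_X(\cS)$ with $R_r\subseteq R$, note that $F(X)\to\Nat(R,F)\to\Nat(R_r,F)$ is the canonical isomorphism (so $F(X)\to\Nat(R,F)$ is split mono), and extend a given $\psi:R\to F$ by restricting it to $R_r$, extending uniquely to $s\in F(X)$, and checking $s|_R=\psi$ on each $f\in R(A)$ after pulling back along a refinement $R_{r'}\subseteq f^{-1}R_r$ with $r'\in\widetilde{\Co}_A(\cS)$ — this refinement exists by (PTc), i.e. Lemma~\ref{LemPTa}(ii), and $f^{-1}R_r\in\cT(A)$ by (T2), while $F(r')$ is injective by the hypothesis at $A$.

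For $\Sh(\ba,\cT)\subseteq\Sh_{\cS}\ba$: let $F$ be a $\cT$-sheaf and fix a sequence \eqref{EqSeq} in $\cS$. Since $q\in\Co_X(\cS)\subseteq\widetilde{\Co}_X(\cS)$ we get $F(X)\simeq\Nat(R_q,F)$, which gives both injectivity of $F(X)\to\prod_\beta F(Y_\beta)$ and the identification of $\im(F(q))$ with the families vanishing on $K_q$; as $\im\phi_p\subseteq K_q$, this image lies in $\ker(F(p))$. For the reverse inclusion take $g=(s_\beta):\oplus_\beta\Yon(Y_\beta)\to F$ vanishing on $\im\phi_p$; an element of $K_q(A)$ is a formal $f:A\to\amalg_\beta Y_\beta$ with $q\circ f=0$, and (PTb) supplies $q'\in\widetilde{\Co}_A(\cS)$ together with a factorisation of $f\circ q'$ through $p$, so $g$ kills each $f\circ q'_\mu$; by naturality $F(q')$ annihilates the value of $g_A$ on this element, and since $R_{q'}\in\cT(A)$ and $F$ is a $\cT$-sheaf, $F(q')$ is injective, so that value is $0$. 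Hence $\ker(F(p))=\im(F(q))$ and $F\in\Sh_{\cS}\ba$.

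For $\Sh_{\cS}\ba\subseteq\Sh(\ba,\cT)$: let $F\in\Sh_{\cS}\ba$; by the characterisation above it suffices to show $F(X)\simeq\Nat(R_r,F)$ for every $r\in\widetilde{\Co}_X(\cS)$, which I prove by induction on the least $i$ with $r\in{\Co}^i_X(\cS)$. First I establish, by a separate induction on $i$, that $F(r):F(X)\to\prod_\epsilon F(B_\epsilon)$ is injective for every $r=(r_\epsilon)\in{\Co}^i_X(\cS)$: for $i\le1$ this is the injectivity built into $\Sh_{\cS}\ba$, and if $r=v\circ\amalg_\alpha q(\alpha)$ with $v\in{\Co}^{i-1}$ and $q(\alpha)\in\Co(\cS)$ then $F(r)$ factors as $F(v)$ followed by $\prod_\alpha F(q(\alpha))$, the first factor injective by induction and each $F(q(\alpha))$ injective because $q(\alpha)\in\Co(\cS)$ and $F\in\Sh_{\cS}\ba$. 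In particular $F(X)\to\Nat(R_r,F)$ is injective for all $r\in\widetilde{\Co}$. Now the isomorphism for $i\le1$ is direct: for $i=1$, $\Nat(R_q,F)$ (the families vanishing on $K_q$) equals $\ker(F(p))=\im(F(q))\simeq F(X)$ exactly as in the previous paragraph. For $i\ge2$, write $r=v\circ\amalg_\alpha q(\alpha)$ with $v=(v_\alpha):\amalg_\alpha V_\alpha\to X$ in ${\Co}^{i-1}_X(\cS)$ and $q(\alpha)\in\Co_{V_\alpha}(\cS)$, and fix $\psi:R_r\to F$. Restricting $\psi$ along the morphisms $v_\alpha\circ q(\alpha)_\delta$ gives, for each $\alpha$, a family in $\prod_\delta F(W_\delta(\alpha))$ vanishing on $K_{q(\alpha)}$ (because $\phi_{q(\alpha)}$ kills $K_{q(\alpha)}$, hence so does the composite $\oplus_\delta\Yon(W_\delta(\alpha))\to\Yon(V_\alpha)\to\Yon(X)$, which factors through the monomorphism $R_r\hookrightarrow\Yon(X)$), hence a unique $t_\alpha\in F(V_\alpha)$ by the case $i=1$. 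It remains to show $(t_\alpha)_\alpha$ vanishes on $K_v$, for then the inductive hypothesis for $v$ yields $s\in F(X)$ with $F(v_\alpha)(s)=t_\alpha$, and $s|_{R_r}=\psi$ follows by evaluating on the generating morphisms $v_\alpha\circ q(\alpha)_\delta$. Given $(g_\alpha)\in K_v(A)$, so $\sum_\alpha v_\alpha g_\alpha=0$, I apply Lemma~\ref{LemPTa}(i) to the finite family $\{q(\alpha)\}$ and $(g_\alpha):A\to\amalg_\alpha V_\alpha$ to obtain $c=(c_\mu)\in\widetilde{\Co}_A(\cS)$ together with factorisations of each $g_\alpha\circ c$ through $q(\alpha)$; a direct computation using the relations $F(q(\alpha)_\delta)(t_\alpha)=\psi(v_\alpha q(\alpha)_\delta)$ and naturality of $\psi$ then gives $F(c_\mu)\big(\sum_\alpha F(g_\alpha)(t_\alpha)\big)=\psi_{B_\mu}\big((\sum_\alpha v_\alpha g_\alpha)\circ c_\mu\big)=0$ for all $\mu$, and injectivity of $F(c)$ (from the first induction) forces $\sum_\alpha F(g_\alpha)(t_\alpha)=0$. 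This completes both inductions, hence the inclusion.

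The main obstacle is this last inductive step. Because the covering system generated by $\cS$ is built by iteration ($\widetilde{\Co}=\bigcup_i{\Co}^i$), the descent statement $F(X)\simeq\Nat(R_r,F)$ has to be proved level by level, and at each level the task is to glue the locally defined sections $t_\alpha$; doing so uses the pretopology axioms (PTa)/(PTc) packaged in Lemma~\ref{LemPTa}, together with the separatedness of $F$ (that is, the injectivity of all $F(r)$, $r\in\widetilde{\Co}$) which must therefore be secured beforehand by its own induction. The remaining arguments are bookkeeping with the two reformulations above, and the closing sentence follows from the equality $\Sh_{\cS}\ba=\Sh(\ba,\topp(\cS))$ and Theorem~\ref{ThmBQ}.
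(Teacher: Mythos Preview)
Your proof is correct and follows essentially the same approach as the paper's: both establish separatedness for $F\in\Sh_{\cS}\ba$ first (your separate injectivity induction is Lemma~\ref{LemSep}), reduce the sheaf condition to testing against sieves $R_r$ with $r\in\widetilde{\Co}(\cS)$ (your first-paragraph characterisation is Corollary~\ref{CorChiant} together with Lemma~\ref{LemChiant}(iii)), use (PTb) plus separatedness for the inclusion $\Sh(\ba,\cT)\subseteq\Sh_{\cS}\ba$ (this is Lemma~\ref{LemChiant}(i)), and then run an induction on the level $i$ in $\widetilde{\Co}=\bigcup_i\Co^i$ using Lemma~\ref{LemPTa}(i) for the gluing step (the paper packages this as Lemma~\ref{LemChiant}(ii)). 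The only difference is organisational: the paper isolates the technical steps as separate lemmas about separated presheaves, whereas you embed them directly into the two inclusions.
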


We will write the proof of the theorem in a couple of steps. Throughout, we keep the pretopology $\cS$ fixed and always set $\cT=\topp(\cS)$.

For a morphism $v:\amalg_\alpha V_\alpha\to X$ we can complete $\oplus_\alpha\Yon(V_\alpha)\tto R_v$ to a presentation of $R_v$ in $\PSh\ba$, so that applying $\Nat(-,F)$, for $F\in\PSh\ba$,  yields the exact sequence
\begin{equation}\label{NatPres}0\to \Nat(R_v,F)\to \prod_\alpha F(V_\alpha)\to \prod_{f: W\to \amalg_\alpha V_\alpha\,|\, v\circ f=0}F(W).\end{equation}

\begin{lemma}\label{LemSep}
The following conditions are equivalent for $F\in\PSh\ba$.
\begin{enumerate}[label=(\alph*)]
\item $F$ is $\cT$-separated.
\item $F(X)\to\prod_\alpha F(V_\alpha)$ is a monomorphism for every $\amalg_\alpha V_\alpha\to X$ in $\widetilde{\Co}(\cS)$.
\item $F(X)\to\prod_\beta F(Y_\beta)$ is a monomorphism for every $\amalg_\beta Y_\beta\to X$ in $\Co(\cS)$.
\end{enumerate}
\end{lemma}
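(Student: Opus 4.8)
The plan is to establish the cyclic chain of implications (a)$\Rightarrow$(b)$\Rightarrow$(c)$\Rightarrow$(a), with (c)$\Rightarrow$(a) carrying the real content. For (a)$\Rightarrow$(b): given $v\colon\amalg_\alpha V_\alpha\to X$ in $\widetilde{\Co}(\cS)$, the sieve $R_v$ belongs to $\topp(\cS)(X)=\cT(X)$ directly by the definition of $\topp$, so $\cT$-separatedness of $F$ gives that $F(X)\simeq\Nat(\ba(-,X),F)\to\Nat(R_v,F)$ is a monomorphism. Post-composing with the monomorphism $\Nat(R_v,F)\hookrightarrow\prod_\alpha F(V_\alpha)$ from \eqref{NatPres} and unwinding the Yoneda identifications (the $\alpha$-component $\Yon(V_\alpha)\to R_v$ of the presentation classifies $v_\alpha$), this composite is exactly the map $\xi\mapsto(F(v_\alpha)\xi)_\alpha$, which is therefore a monomorphism; this is (b). The implication (b)$\Rightarrow$(c) is immediate since $\Co(\cS)=\Co^1(\cS)\subseteq\widetilde{\Co}(\cS)$.

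For (c)$\Rightarrow$(a) I would first reduce the claim. An arbitrary $R\in\cT(X)$ contains $R_r$ for some $r\colon\amalg_\alpha V_\alpha\to X$ in $\widetilde{\Co}_X(\cS)$; the restriction $F(X)\to\Nat(R,F)$ post-composed with $\Nat(R,F)\to\Nat(R_r,F)$ is the restriction $F(X)\to\Nat(R_r,F)$, so $F$ is $\cT$-separated as soon as $F(X)\to\Nat(R_r,F)$ is a monomorphism for every $r\in\widetilde{\Co}_X(\cS)$; invoking \eqref{NatPres} once more, it even suffices that $F(X)\to\prod_\alpha F(V_\alpha)$, $\xi\mapsto(F(r_\alpha)\xi)_\alpha$, is a monomorphism for every $r=(r_\alpha)\colon\amalg_\alpha V_\alpha\to X$ in $\widetilde{\Co}_X(\cS)$. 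This I would prove by induction on $i$ with $r\in\Co^i_X(\cS)$: for $i\le 1$ it is trivial ($i=0$: identities; $i=1$: hypothesis (c)); for $i\ge 2$, write $t\in\Co^i_X(\cS)$ as $t_{\alpha\delta}=v_\alpha\circ w(\alpha)_\delta$ with $v\in\Co^{i-1}_X(\cS)$ and each $w(\alpha)\in\Co_{V_\alpha}(\cS)$, and observe $F(t_{\alpha\delta})\xi=F(w(\alpha)_\delta)\bigl(F(v_\alpha)\xi\bigr)$; if this vanishes for all $\alpha,\delta$, then (c) applied to each $w(\alpha)$ forces $F(v_\alpha)\xi=0$ for all $\alpha$, whence $\xi=0$ by the inductive hypothesis applied to $v$.

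I expect the only delicate point to be the bookkeeping identifying the composite $F(X)\to\Nat(R_v,F)\hookrightarrow\prod_\alpha F(V_\alpha)$ with the map $\xi\mapsto(F(v_\alpha)\xi)_\alpha$; this is a routine Yoneda computation using the presentation of $R_v$ recalled just before the lemma. Everything else — the reduction from arbitrary covering sieves to the $R_r$ and the induction over the filtration $\widetilde{\Co}(\cS)=\bigcup_i\Co^i(\cS)$ — is formal.
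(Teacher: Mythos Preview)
Your argument is correct and follows essentially the same route as the paper. The only organisational difference is that the paper proves (b)$\Leftrightarrow$(c) in one line (the map for $r\in\widetilde{\Co}(\cS)$ is a composite of the maps for its constituent elements of $\Co(\cS)$, and compositions of monomorphisms are monomorphisms) and then (a)$\Leftrightarrow$(b) via the same \eqref{NatPres} reduction you use, whereas you run the cycle (a)$\Rightarrow$(b)$\Rightarrow$(c)$\Rightarrow$(a) and spell out the induction over $\Co^i(\cS)$ explicitly.
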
 
\begin{proof}
Since compositions of monomorphisms are monomorphisms, it is clear that (b) and (c) are equivalent. We thus focus on the equivalence between (a) and (b).

It follows from the definition of $\topp(\cS)$ that $F$ is separated if and only if $F(X)\to \Nat(R_s,F)$ is a monomorphism for every $s\in \widetilde{\Co}(\cS)$. The presentation \eqref{NatPres} shows that the latter is indeed equivalent with condition (b).
\end{proof}

\begin{lemma}\label{LemChiant}
Let $F$ be a $\cT$-separated presheaf.
\begin{enumerate}[label=(\roman*)]
\item For every sequence \eqref{EqSeq} in $\cS$, we have
$$\ker\left(\prod_\beta F(Y_\beta)\;\to \; \prod_{f: U\to \amalg_\beta Y_\beta\,|\, q\circ f=0} F(U)\right)\;=\;\ker\left(\prod_\beta F(Y_\beta)\;\to \; \prod_\gamma F(Z_\gamma)\right).$$
\item Consider $v:\amalg_\alpha V_\alpha\to X$ in $\widetilde{\Co}(\cS)$ and a morphism $q(\alpha):\amalg_{\beta\in B(\alpha)} W_\beta(\alpha)\to V_\alpha$ in $\Co(\cS)$, for each $\alpha$, and set $w=v\circ \amalg_\alpha q(\alpha)$.
The kernel of
$$ \prod_\alpha F(V_\alpha)\;\to \;\prod_{f:U\to\amalg_\alpha V_\alpha \,|\, v\circ f=0} F(U)$$
equals the kernel of the composite of
$$\prod_\alpha F(V_\alpha)\;\to\;\prod_{\alpha}\prod_\beta F(W_\beta(\alpha))\;\to\; \prod_{g :Q\to\amalg_\alpha\amalg_\beta W_\beta(\alpha) \,|\, w\circ g=0} F(Q).$$
\item For $r:\amalg_\alpha V_\alpha\to X$ in $\widetilde{\Co}(\cS)$ and any sieve $R$ on $X$ containing $R_r$, the inclusion $R_r\subset R$ induces a monomorphism $\Nat(R,F)\hookrightarrow \Nat(R_r,F)$.
\end{enumerate}
\end{lemma}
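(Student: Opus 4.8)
The plan is to prove the three parts in the order stated, since (iii) will be a quick consequence of the tools assembled for (i) and (ii), and (i) is essentially the base case of the inductive situation in (ii).

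For part (i): the containment $\ker(\prod_\beta F(Y_\beta)\to\prod_f F(U))\subseteq\ker(\prod_\beta F(Y_\beta)\to\prod_\gamma F(Z_\gamma))$ is trivial, because the map $p:\amalg_\gamma Z_\gamma\to\amalg_\beta Y_\beta$ satisfies $q\circ p=0$, so each $Z_\gamma\to\amalg_\beta Y_\beta$ (composed through $p$) is one of the test morphisms $f$. For the reverse containment, take an element $(s_\beta)\in\prod_\beta F(Y_\beta)$ killed by $p^\ast$, i.e. lying in $\ker(\prod_\beta F(Y_\beta)\to\prod_\gamma F(Z_\gamma))$. Given any $f:U\to\amalg_\beta Y_\beta$ with $q\circ f=0$, I must show $f^\ast(s_\beta)=0$ in $F(U)$. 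Since $q\circ f=0$, property (PTb) of the pretopology $\cS$ supplies $q'\in\widetilde{\Co}_U(\cS)$, say $q':\amalg_\alpha B_\alpha\to U$, together with $f':\amalg_\alpha B_\alpha\to\amalg_\gamma Z_\gamma$ making the square in (PTb) commute: $p\circ f'=f\circ q'$. Then pulling $(s_\beta)$ back along $f\circ q'=p\circ f'$ gives zero in $\prod_\alpha F(B_\alpha)$ (because it already vanishes after pullback along $p$). So $q'^\ast(f^\ast(s_\beta))=0$ in $\prod_\alpha F(B_\alpha)$. Now $F$ is $\cT$-separated and $q'\in\widetilde{\Co}_U(\cS)$, so by Lemma~\ref{LemSep}(b) the map $F(U)\to\prod_\alpha F(B_\alpha)$ is a monomorphism; hence $f^\ast(s_\beta)=0$. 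As $f$ was arbitrary, $(s_\beta)$ lies in the left-hand kernel, proving (i).

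For part (ii): again one inclusion is formal. If $(t_\alpha)\in\prod_\alpha F(V_\alpha)$ is killed by all $f:U\to\amalg_\alpha V_\alpha$ with $v\circ f=0$, then in particular its image $(t'_{\alpha,\beta})\in\prod_\alpha\prod_\beta F(W_\beta(\alpha))$ under the $q(\alpha)$'s is killed by all $g:Q\to\amalg_\alpha\amalg_\beta W_\beta(\alpha)$ with $w\circ g=0$: indeed $w\circ g=v\circ(\amalg_\alpha q(\alpha))\circ g=0$ exhibits $(\amalg_\alpha q(\alpha))\circ g$ as such an $f$. For the reverse inclusion, suppose the image $(t'_{\alpha,\beta})$ is killed by all such $g$; I must show $(t_\alpha)$ is killed by any $f:U\to\amalg_\alpha V_\alpha$ with $v\circ f=0$. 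Writing $f=(f_\alpha)$ with $f_\alpha:U\to V_\alpha$, I apply part (i) — or rather the argument of part (i) applied to each $q(\alpha)$ — component-wise: for each fixed $\alpha$, the element $t_\alpha\in F(V_\alpha)$ together with the datum $q(\alpha)\in\Co(\cS)$ lets me use (PTb) to replace $f_\alpha$ after a refinement. More efficiently, apply Lemma~\ref{LemPTa}(i) to the family $\{q(\alpha):\amalg_\beta W_\beta(\alpha)\to V_\alpha\}_\alpha$ and the morphism $f=(f_\alpha):U\to\amalg_\alpha V_\alpha$ to obtain a single $q''\in\widetilde{\Co}_U(\cS)$, $q'':\amalg_\delta C_\delta\to U$, with compatible lifts $f''_\alpha:\amalg_\delta C_\delta\to\amalg_\beta W_\beta(\alpha)$ satisfying $q(\alpha)\circ f''_\alpha=f_\alpha\circ q''$ for all $\alpha$. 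These assemble into $g:\amalg_\delta C_\delta\to\amalg_\alpha\amalg_\beta W_\beta(\alpha)$ with $w\circ g=v\circ f\circ q''=0$, so by hypothesis $g^\ast(t'_{\alpha,\beta})=0$ in $\prod_\delta F(C_\delta)$. But $g^\ast(t'_{\alpha,\beta})=q''^\ast(f^\ast(t_\alpha))$, so $q''^\ast(f^\ast(t_\alpha))=0$; since $F$ is separated and $q''\in\widetilde{\Co}_U(\cS)$, Lemma~\ref{LemSep}(b) forces $f^\ast(t_\alpha)=0$, as desired.

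For part (iii): the map $\Nat(R,F)\to\Nat(R_r,F)$ restricts natural transformations along $R_r\hookrightarrow R$. To see injectivity, suppose $\varphi,\psi:R\to F$ agree on $R_r$; I must show $\varphi=\psi$ on all of $R$, i.e. on $R(B)$ for every $B$. Take $h\in R(B)\subseteq\ba(B,X)$. Since $r\in\widetilde{\Co}_X(\cS)$ and $R_r\subseteq R$, property (PTc) (which holds for every pretopology, by Lemma~\ref{LemPTa}(ii)) applied to $r$ and $h:B\to X$ yields $r'\in\widetilde{\Co}_B(\cS)$, $r':\amalg_\delta D_\delta\to B$, whose components $r'_\delta$ satisfy $h\circ r'_\delta\in R_r(D_\delta)$. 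Then $\varphi(h)$ and $\psi(h)$ in $F(B)$ have the same image in each $F(D_\delta)$, because $(h\circ r'_\delta)^\ast$ factors $\varphi,\psi$ through their common values on $R_r$ — using naturality $\varphi(h)|_{D_\delta}=\varphi(h\circ r'_\delta)=\psi(h\circ r'_\delta)=\psi(h)|_{D_\delta}$. Hence $\varphi(h)-\psi(h)$ maps to $0$ in $\prod_\delta F(D_\delta)$; since $F$ is separated and $r'\in\widetilde{\Co}_B(\cS)$, Lemma~\ref{LemSep}(b) gives $\varphi(h)=\psi(h)$.

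I expect the main obstacle to be the bookkeeping in part (ii): one must be careful that the refinement $q''$ produced by Lemma~\ref{LemPTa}(i) is uniform across all the indices $\alpha$ and that the assembled morphism $g$ genuinely satisfies $w\circ g=0$ as a formal morphism (the compositions through the formal coproducts must be checked component-wise, using finiteness of the relevant sums). Everything else is a standard separated-presheaf argument of the form ``vanishing after a covering refinement plus separatedness implies vanishing,'' with (PTb) supplying the refinements in parts (i)–(ii) and (PTc) in part (iii).
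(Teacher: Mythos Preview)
Your proof is correct and follows essentially the same approach as the paper's. In all three parts you use the same key inputs: (PTb) for part (i), Lemma~\ref{LemPTa}(i) for part (ii), (PTc) for part (iii), each time combined with separatedness via Lemma~\ref{LemSep}(b) to conclude vanishing from vanishing after a refinement. Your exposition is in fact slightly more explicit than the paper's in part (ii), where you spell out why the assembled morphism $g$ satisfies $w\circ g=0$ and hence why the hypothesis applies component-wise on the $C_\delta$.
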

\begin{proof}
For both proposed equalities in (i) and (ii), inclusion in one direction is obvious, so we only prove inclusion in the other direction.

For part (i), take $y\in\prod_\beta F(Y_\beta)$ which is sent to zero in $\prod_\gamma F(Z_\gamma)$. We need to show it is also in the kernel on the left-hand side. So consider $f: U\to \amalg_\beta Y_\beta$ with $  q\circ f=0$. By assumption (PTb) there exists a commutative diagram with $q':\amalg_\alpha B_\alpha\to U$ in $\widetilde{\Co}(\cS)$ such that evaluation of $F$ yields a commutative diagram
$$\xymatrix{
\prod_\gamma F(Z_\gamma)\ar[d]& \prod_\beta F(Y_\beta)\ar[l]\ar[d]&&&0\ar@{{|}->}[d]&y\ar@{{|}->}[l]\ar@{{|}->}[d]\\
\prod_\alpha F(B_\alpha) & F(U)\ar@{_{(}->}[l]&&&0&?\ar@{{|}->}[l].
}$$
The lower horizontal arrow is a monomorphism by assumption that $F$ be separated, see Lemma~\ref{LemSep}.  Hence $y$ is indeed sent to zero in $F(U)$ as desired.

Now we prove part (ii). Consider $x\in \prod_\alpha F(V_\alpha)$ sent to zero by the composite morphism. We need to show it is also in the first kernel. Consider first an arbitrary $f:U\to\amalg_\alpha V_\alpha$. By Lemma~\ref{LemPTa}(i) there exists some $q': \amalg_\delta C_\delta\to U$ in $\widetilde{\Co}(\cS)$, such that evaluation of $F$ yields a commutative diagram
$$\xymatrix{
\prod_\alpha\prod_\beta F(W_\beta(\alpha))\ar[d]&&\prod_\alpha F(V_\alpha)\ar[ll]\ar[d]\\
\prod_\delta F(C_\delta)&&F(U).\ar@{_{(}->}[ll]
}$$ Now assume that $ v\circ f=0$.
It then follows from that $x$ is sent to zero by the upper path from top right to bottom left in the diagram. Consequently $x\mapsto 0$ under $\prod_\alpha F(V_\alpha)\to F(U)$. This concludes the proof of part (ii).

Now we prove part (iii). 
Consider arbitrary $A\in\ba$ and $f\in R(A)\subset\ba(A,X)$. By (PTa') we have an $r':\amalg_\beta B_\beta\to A$ in $\widetilde{\Co}(\cS)$ yielding a commutative diagram in $\PSh\ba$
$$\xymatrix{
\bigoplus_\alpha\Yon(V_\alpha)\ar[r]&R_r\ar@{^{(}->}[r]&R\ar@{^{(}->}[r]&\Yon(X)\\
\bigoplus_\beta\Yon(B_\beta)\ar[rr]\ar[u]&&\Yon(A).\ar[u]
}$$
Applying $\Nat(-,F)$ and the Yoneda lemma yields the following commutative diagram:
$$\xymatrix{
\prod_\alpha F(V_\alpha)\ar[d]&\Nat(R_r,F)\ar[l]&\Nat(R,F)\ar[l]\ar[d]^{\eta\mapsto \eta_A(f)}\\\
\prod_\beta F(B_\beta)&&F(A).\ar@{_{(}->}[ll]
}$$
If $\eta\in\Nat(R,F)$ is sent to $0$ in $\Nat(R_r,F)$, it thus follows that $\eta_A(f)=0$. Since $f\in R(A)$ was arbitrary, $\eta_A:R(A)\to F(A)$ is zero. But also $A$ was arbitrary, so $\eta=0$.
\end{proof}

\begin{corollary}\label{CorChiant}
A $\cT$-separated presheaf $F$ is a $\cT$-sheaf if and only if the sequence
$$ F(X)\to \prod_\alpha F(V_\alpha)\to \prod_{f: W\to \amalg_\alpha V_\alpha\,|\, r\circ f=0}F(W)$$
is exact for every $r:\amalg_\alpha V_\alpha\to X$ in $\widetilde{\Co}(\cS)$.
\end{corollary}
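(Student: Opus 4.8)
The plan is to translate both conditions into statements about the restriction maps $F(X)=\Nat(\Yon(X),F)\to\Nat(R,F)$ and to compare them through the presentation \eqref{NatPres}. Fix $r:\amalg_\alpha V_\alpha\to X$ in $\widetilde{\Co}_X(\cS)$ and write $\prod_f F(W)$ for the product over all $f:W\to\amalg_\alpha V_\alpha$ with $r\circ f=0$. The displayed sequence is a complex (the composite vanishes since $r\circ f=0$ for every such $f$), and by \eqref{NatPres} the kernel of its second map $\prod_\alpha F(V_\alpha)\to\prod_f F(W)$ is exactly $\Nat(R_r,F)$, with the first map $F(X)\to\prod_\alpha F(V_\alpha)$ factoring through this kernel as the restriction $F(X)\to\Nat(R_r,F)$ along $R_r\hookrightarrow\Yon(X)$, followed by the inclusion $\Nat(R_r,F)\hookrightarrow\prod_\alpha F(V_\alpha)$. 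Since $R_r\in\cT(X)$ by definition of $\topp(\cS)$ and $F$ is $\cT$-separated, $F(X)\to\Nat(R_r,F)$ is a monomorphism, hence so is $F(X)\to\prod_\alpha F(V_\alpha)$. Consequently, exactness of the displayed sequence at $\prod_\alpha F(V_\alpha)$ amounts precisely to surjectivity of $F(X)\to\Nat(R_r,F)$, and the corollary reduces to the statement: a $\cT$-separated presheaf $F$ is a $\cT$-sheaf if and only if $F(X)\to\Nat(R_r,F)$ is surjective for every $X\in\ba$ and every $r\in\widetilde{\Co}_X(\cS)$.

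The forward implication is immediate, because $R_r\in\topp(\cS)(X)=\cT(X)$, so the sheaf condition for $F$ applies verbatim to the sieve $R_r$. For the converse, fix $X\in\ba$ and $R\in\cT(X)$; we must show that the restriction $F(X)\to\Nat(R,F)$ is an isomorphism, and since it is a monomorphism by $\cT$-separatedness, only surjectivity is in question. By definition of $\topp(\cS)$ there is some $r:\amalg_\alpha V_\alpha\to X$ in $\widetilde{\Co}_X(\cS)$ with $R_r\subset R$, and the restriction map then factors as $F(X)\to\Nat(R,F)\to\Nat(R_r,F)$. The reformulated hypothesis makes the composite $F(X)\to\Nat(R_r,F)$ surjective, whereas Lemma~\ref{LemChiant}(iii), applied to $r$, the inclusion $R_r\subset R$ and the separated presheaf $F$, shows that $\Nat(R,F)\to\Nat(R_r,F)$ is a monomorphism. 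A short formal argument now finishes the proof: surjectivity of the composite forces $\Nat(R,F)\to\Nat(R_r,F)$ to be surjective, hence (being a monomorphism as well) an isomorphism, so that $F(X)\to\Nat(R,F)$, being the composite of a surjection with an isomorphism, is itself surjective.

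I do not anticipate a genuine difficulty once Lemma~\ref{LemChiant}(iii) is in hand: the corollary is essentially the assembly of \eqref{NatPres}, the definition of $\topp(\cS)$, and $\cT$-separatedness. The one point deserving a little care is checking, under the Yoneda identifications, that the map appearing in \eqref{NatPres} is compatible with the inclusions $R_r\subset R\subset\Yon(X)$, so that the various restriction maps used above are literally the same morphisms; this is routine.
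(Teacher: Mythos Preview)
Your proof is correct and follows essentially the same route as the paper's: both use \eqref{NatPres} to identify exactness of the displayed sequence with $F(X)\to\Nat(R_r,F)$ being an isomorphism, deduce the forward implication from $R_r\in\cT(X)$, and for the converse sandwich an arbitrary $R\in\cT(X)$ as $R_r\subset R\subset\Yon(X)$ and invoke Lemma~\ref{LemChiant}(iii) to conclude that the left restriction map is an isomorphism once the composite is. The only cosmetic difference is that the paper phrases the final step as ``a composite of two monomorphisms which is an isomorphism forces the first to be an isomorphism,'' whereas you unwind this via surjectivity; the content is identical.
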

\begin{proof}
By Lemma~\ref{LemSep} and exactness in \eqref{NatPres}, exactness of the displayed sequence is equivalent with $F(X)\to\Nat(R_r,F)$ being an isomorphism, for every $r\in \widetilde{\Co}_X(\cS)$. In particular, the sequence is exact for a sheaf. Now for any $R\in\cT(X)$ we have $R_r\subset R\subset \ba(-,X)$ for some $r\in \widetilde{\Co}(\cS)$. By Lemma~\ref{LemChiant}(iii), the inclusions yield monomorphisms
$$F(X)\hookrightarrow \Nat(R,F)\hookrightarrow \Nat(R_r,F).$$
Consequently, if the composite is an isomorphism, then so is the left arrow. This proves the second direction of the claim.
\end{proof}

\begin{proof}[Proof of Theorem~\ref{ThmSheaves}]
Based on Lemma~\ref{LemSep} and Corollary~\ref{CorChiant} it now suffices to prove that the following properties are equivalent for a separated presheaf $F$:
\begin{enumerate}
\item[(p)] The sequence 
$$F(X)\;\to\;\prod_\beta F(Y_\beta)\;\to \; \prod_\gamma F(Z_\gamma)$$
is exact for every $\amalg_\gamma Z_\gamma\to\amalg_\beta Y_\beta\to X$ in $\cS$.
\item[(t)] The sequence
$$ F(X)\to \prod_\alpha F(V_\alpha)\to \prod_{f: U\to \amalg_\alpha V_\alpha\,|\, r\circ f=0}F(U)$$
is exact for every $r:\amalg_\alpha V_\alpha\to X$ in $\widetilde{\Co}(\cS)$.
\end{enumerate}

If (t) is satisfied, then considering the special case $r\in\Co(\cS)$ together with Lemma~\ref{LemChiant}(i) shows that (p) is also satisfied.

Now assume that (p) is satisfied for a separated presheaf $F$. We will prove by induction on $i\in\mN$ that (t) is satisfied for $r\in \Co^i(\cS)$. For $i=0$ the statement is trivial and for $i=1$ it is immediate (or follows from \ref{LemChiant}(i)).

Assume therefore that for some $r:\amalg_\alpha V_\alpha\to X$ in $\Co^{i}(\cS)$, the sequence in (t) is exact. Now take for each $\alpha$ a sequence
$$\amalg_\gamma Z_\gamma(\alpha)\xrightarrow{p(\alpha)} \amalg_\beta W_\beta(\alpha)\xrightarrow{q(\alpha)} V_\alpha$$
in $\cS$ and consider $w=r\circ\amalg_\alpha q(\alpha)\in \Co^{i+1}(\cS)$. We need to prove that 
$$F(X)\to \prod_\alpha \prod_\beta F(W_\beta(\alpha))\to \prod_{g :Q\to\amalg_\alpha\amalg_\beta W_\beta(\alpha) \,|\, w\circ g=0} F(Q)$$
is exact. We will exploit that the left morphism factors via $F(X)\to\prod_\alpha F(V_\alpha)$ to prove exactness in two steps.


Take $m\in \prod_\alpha\prod_\beta F(W_\beta(\alpha))$ such that $m\mapsto 0$ in the above sequence. 
By letting $g$ range over all morphisms of the composite form $ Z_{\gamma_0}(\alpha_0)\to\amalg_\beta W_\beta(\alpha_0)\to \amalg_\alpha \amalg_\beta W_\beta(\alpha)$ and using assumption (p) for each $\alpha_0$ (and the fact that products are exact in $\Mod$), we find that $m$ is the image of some $n$ under $\prod_\alpha F(V_\alpha)\to\prod_\alpha\prod_\beta F(W_\beta(\alpha))$. That $n$ is in the image of $F(X)\to\prod_\alpha F(V_\alpha)$ then follows from Lemma~\ref{LemChiant}(ii) and our assumption on $r$.
\end{proof}

We conclude the subsection with some consequences of our results.


\begin{corollary}\label{CorTriv} 
Consider a class $\cS$ of sequences \eqref{EqSeq} for which $\Sh_{\cS}\ba\hookrightarrow \PSh\ba$ is reflective (for instance $\cS$ is a pretopology or $\kappa$-bounded).
Then all the induced sequences
$$\bigoplus_\gamma \tZ(Z_\gamma)\to\bigoplus_{\beta}\tZ(Y_\beta)\to \tZ(X)\to 0$$
are exact in $\Sh_{\cS}\ba$. In particular,  for a pretopology $\cS$ we have $\cS\subset\pre(\topp\cS)$.
\end{corollary}
\begin{proof}
Exactness of the sequences follows from the defining condition of $\Sh_{\cS}\ba$ and the adjunction $\tS\dashv \tI$ which implies
$$\Sh_{\cS}\ba (\tZ(-),F)\simeq F(-).$$ The second statement then follows from Theorem~\ref{ThmSheaves} .
\end{proof}

\begin{corollary}
Consider a creator $u:\ba\to\bC$ of a Grothendieck category $\bC$. Then $\bC$ is equivalent to the full subcategory of $F\in\PSh\ba$ for which
$$0\;\to\; F(X)\;\to\;\prod_\beta F(Y_\beta)\;\to \; \prod_\gamma F(Z_\gamma)$$
is exact in $\Mod$ for every sequence \eqref{EqSeq} for which the following is exact in $\bC$:
$$\bigoplus_\gamma u(Z_\gamma)\;\to\;\bigoplus_\beta u(Y_\beta)\;\to\; u(X)\;\to\;0.$$
\end{corollary}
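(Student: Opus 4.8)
The plan is to deduce this from the two theorems just proved, namely Theorem~\ref{ThmSheaves} identifying $\Sh_\cS\ba$ with $\Sh(\ba,\topp\cS)$, together with the Gabriel--Popescu type statement that the essential image of a creator $u$ is a Giraud subcategory of $\PSh\ba$. More precisely, since $u:\ba\to\bC$ is a creator, the right adjoint $\bC\to\PSh\ba$, $M\mapsto \bC(u-,M)$, is a localisation; by Theorem~\ref{ThmBQ} this localisation corresponds to a unique $k$-linear Grothendieck topology $\cT$ on $\ba$, and $\bC\simeq \Sh(\ba,\cT)$ as subcategories of $\PSh\ba$, with $u$ identified with $\tZ=\tS\circ\Yon$. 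So it suffices to exhibit a pretopology $\cS$ with $\topp\cS=\cT$ and whose associated sequences are exactly those described in the statement, i.e. the sequences \eqref{EqSeq} for which $\bigoplus_\gamma u(Z_\gamma)\to\bigoplus_\beta u(Y_\beta)\to u(X)\to 0$ is exact in $\bC$.

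First I would take $\Theta$ to be the sheafification functor $\tS:\PSh\ba\to\bC$ (composed with the equivalence $\bC\simeq\Sh(\ba,\cT)$, or simply $\Theta=\tS$ in the notation $\pre(\cT)=\cS(\tS)$). This is exact and cocontinuous, so Lemma~\ref{LemEpiLoc}(ii) tells us $\cS(\Theta)=\pre(\cT)$ is a pretopology, and by definition $\cS(\Theta)$ consists of precisely the formal sequences \eqref{EqSeq} for which $\bigoplus_\gamma\tZ(Z_\gamma)\to\bigoplus_\beta\tZ(Y_\beta)\to\tZ(X)\to 0$ is exact in $\bC$ — which is exactly the class of sequences named in the statement once we identify $u=\tZ$. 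It then remains to check that $\topp(\pre(\cT))=\cT$, but this is precisely Theorem~\ref{ThmTPT}(iii).

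With $\cS:=\pre(\cT)$ in hand, the conclusion is immediate: by Theorem~\ref{ThmSheaves}, $\Sh_\cS\ba=\Sh(\ba,\topp\cS)=\Sh(\ba,\cT)\simeq\bC$, and by the definition of $\Sh_\cS\ba$ (given just before Lemma~\ref{LemReflComp}) this subcategory of $\PSh\ba$ is exactly the full subcategory of $F$ for which $0\to F(X)\to\prod_\beta F(Y_\beta)\to\prod_\gamma F(Z_\gamma)$ is exact for every sequence in $\cS$, i.e. for every sequence \eqref{EqSeq} rendered right exact by $u$. The only genuine point requiring care — and the step I would flag as the crux — is the identification of $\bC$, as a subcategory of $\PSh\ba$, with $\Sh(\ba,\cT)$ for the topology $\cT$ extracted via Theorem~\ref{ThmBQ}: one must know that the creator condition packages $\bC$ as a genuine Giraud subcategory with $u$ becoming $\tZ$, so that Lemma~\ref{LemEpiLoc}(i)'s description of $\Co(\cS(\tS))$ in terms of $u$ is literally the epimorphism condition on $\bigoplus u(Y_\beta)\to u(X)$. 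Everything else is a direct assembly of the cited results.
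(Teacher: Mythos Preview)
Your argument is correct and is precisely the intended one: the paper states this corollary as an immediate consequence of Theorems~\ref{ThmSheaves} and~\ref{ThmBQ}, and your proof unpacks exactly that deduction, using $\cS=\pre(\cT)$ together with Theorem~\ref{ThmTPT}(iii) to close the loop. The identification $u\simeq\tZ$ that you flag is indeed routine (it follows from $\bC(L\Yon(X),M)\simeq\PSh\ba(\Yon(X),RM)=\bC(u(X),M)$ for the adjoint pair $(L,R)$), so there is no hidden gap.
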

\begin{proof}
This is a combination of Theorem~\ref{ThmBQ} and Theorem~\ref{ThmSheaves} applied to $\cS=\pre(\cT)$, and Theorem~\ref{ThmTPT}(iii).
\end{proof}

\begin{corollary}\cite[Proposition~2.8]{LRS}\label{IntGiraud}
The intersection of a family of Giraud subcategories of $\PSh\ba$ is again Giraud. Concretely, for a family of topologies $\{\cT_i\,|\,i\in I\}$ on $\ba$, we have
$$\bigcap_{i\in I}\Sh(\ba,\cT_i)\;=\;\Sh(\ba,\vee_{i\in I}\cT_i).$$
\end{corollary}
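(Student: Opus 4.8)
The plan is to reduce the statement to the pretopology formalism of the previous subsections, where it becomes essentially formal, exploiting that \emph{unions} of pretopologies are again pretopologies --- the very feature (noted after Lemma~\ref{LemPTa}) which fails for topologies and which makes $\vee_{i\in I}\cT_i$ awkward to describe directly. First I would note that the abstract assertion follows from the concrete one: by Theorem~\ref{ThmBQ} every Giraud subcategory of $\PSh\ba$ equals $\Sh(\ba,\cT)$ for a unique topology $\cT$, and conversely $\Sh(\ba,\cT)$ is Giraud for any topology $\cT$; so, writing a given family of Giraud subcategories as $\{\Sh(\ba,\cT_i)\}_{i\in I}$ and using that $\vee_{i\in I}\cT_i$ is again a topology, it only remains to establish the displayed equality of subcategories of $\PSh\ba$.

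For that equality, set $\cS_i:=\pre(\cT_i)$ for each $i\in I$. By Theorem~\ref{ThmTPT}(i) each $\cS_i$ is a pretopology, and $\topp(\cS_i)=\cT_i$ by Theorem~\ref{ThmTPT}(iii), so Theorem~\ref{ThmSheaves} gives $\Sh_{\cS_i}\ba=\Sh(\ba,\cT_i)$. Now let $\cS:=\bigcup_{i\in I}\cS_i$, which is again a pretopology, and for which $\topp(\cS)=\vee_{i\in I}\cT_i$ by Theorem~\ref{ThmTPT}(v). Since the condition defining $\Sh_{\cS}\ba$ is imposed one sequence at a time, a presheaf lies in $\Sh_{\cS}\ba$ if and only if it lies in $\Sh_{\cS_i}\ba$ for every $i$, i.e.\ $\Sh_{\cS}\ba=\bigcap_{i\in I}\Sh_{\cS_i}\ba$. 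Applying Theorem~\ref{ThmSheaves} once more, this time to $\cS$, we conclude
$$\Sh(\ba,\vee_{i\in I}\cT_i)\;=\;\Sh(\ba,\topp\cS)\;=\;\Sh_{\cS}\ba\;=\;\bigcap_{i\in I}\Sh_{\cS_i}\ba\;=\;\bigcap_{i\in I}\Sh(\ba,\cT_i).$$

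I do not anticipate any genuine obstacle here: with the pretopology machinery in hand the argument is bookkeeping. The two points deserving a moment's care are that one should use the operation $\pre$ rather than $\preo$ (it is $\pre$ for which Theorem~\ref{ThmTPT}(v) is stated), and that one should record explicitly that the sheaf condition cutting out $\Sh_{\cS}\ba$ is ``per sequence'', so that the union of pretopologies translates precisely into the intersection of the corresponding sheaf subcategories --- both of these being immediate from the definitions.
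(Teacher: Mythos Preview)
Your proof is correct and follows essentially the same route as the paper's: choose pretopologies $\cS_i$ with $\topp(\cS_i)=\cT_i$, use that $\Sh_{\cup_i\cS_i}\ba=\bigcap_i\Sh_{\cS_i}\ba$ by definition, and invoke Theorems~\ref{ThmSheaves} and~\ref{ThmTPT}(v). One minor remark: Theorem~\ref{ThmTPT}(v) is stated for \emph{arbitrary} pretopologies $\cS_i$ with $\topp(\cS_i)=\cT_i$, so either $\pre$ or $\preo$ would serve --- your caution about preferring $\pre$ is unnecessary.
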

\begin{proof}
By definition, for pretopologies $\cS_i$, the intersection of the subcategories $\Sh_{\cS_i}\ba$ equals $\Sh_{\cup\cS_i}\ba$.
The conclusion thus follows from Theorems~\ref{ThmSheaves} and~\ref{ThmTPT}(iii) and (v).
\end{proof}

\begin{remark}
The condition on a class $\cS$ of sequences~\eqref{EqSeq} to be a pretopology is not necessary for $\Sh_{\cS}\ba$ to be a Giraud subcategory of $\PSh\ba$. For instance, we can have $\Sh_{\cS}\ba=0$ while $\cS$ is not a pretopology. For example:

Set $\ba=\underline{R}$, for $R$ the free $k$-algebra on generators $a,b$ with relation $ab=0$. Then the pair of sequences
$$\bullet\xrightarrow{0} \bullet\xrightarrow{a} \bullet\quad\mbox{and}\quad \bullet\xrightarrow{0} \bullet\xrightarrow{b}\bullet$$
does not satisfy (PTa) or (PTb), but the only right $R$-module on which both $a$ and $b$ acts as isomorphisms is zero.
\end{remark}

\subsection{Universal property}Unless further specified, $\bB$ and $\bC$ are $k$-linear cocomplete categories. Denote the category of $k$-linear cocontinuous functors $\bB\to\bC$ by $[\bB,\bC]_{cc}$.

\subsubsection{}\label{Skappa}For a class $\cS$ of sequences \eqref{EqSeq} in $\ba$, we denote by $[\ba,\bB]_{\cS}$ the category of $k$-linear functors $h:\ba\to\bB$ for which
$$\bigoplus_\gamma h(Z_\gamma)\;\to\;\bigoplus_\beta h(Y_\beta)\;\to\; h(X)\;\to\; 0$$
is exact (is a cokernel diagram) in $\bB$, for every sequence~\eqref{EqSeq} in $\cS$. 


The following type of result is standard, see for instance \cite[\S 6.4]{Ke} or \cite[\S 3]{Sch2}.
\begin{prop}\label{PropUni}
Let  $\cS$ be a class of sequences \eqref{EqSeq} for which $\Sh_{\cS}\ba\hookrightarrow\PSh\ba$ is reflective (for instance $\cS$ is $\kappa$-bounded or a pretopology). Then precomposition with $\tZ:\ba\to\Sh_{\cS}\ba$ yields an equivalence
$$-\circ\tZ:\quad[\Sh_{\cS}\ba,\bB]_{cc}\;\xrightarrow{\sim}\; [\ba,\bB]_{\cS}.$$
\end{prop}

We start the proof with the following well-known lemma.
\begin{lemma}\label{LemYon}
Assume that $\ba$ is additive. For $F\in\PSh\ba$ let $\el F$ be the category elements of $F$, that is the category of pairs $(A,p)$ with $A\in\ba$ and $p\in F(A)$ with morphisms $(A,p)\to (B,q)$ given by morphisms $f:A\to B$ for which $F(f)$ sends $q$ to $p$. For the forgetful functor $J:\el F\to \ba$, we have a canonical isomorphism
$$\colim(\Yon\circ J)\;\xrightarrow{\sim}\; F.$$ 
\end{lemma}
\begin{proof}
We verify that $F$ possesses the necessary universal property.
Consider therefore $G\in \PSh\ba$ together with, for every $A\in\ba$ and $a\in F(A)$, a given associated $a'\in G(A)$ such that, for every $f:A\to B$ in $\ba$, we have $G(f)(b')=a'$ whenever
 $F(f)(b)=a$. Due to the Yoneda lemma, we just need to prove that there exists a unique natural transformation $F\to G$ which sends every $a\in F(A)$ to $a'\in G(A)$. Uniqueness is obvious, we only need to demonstrate that the functions $F(A)\to G(A)$, $a\mapsto a'$ are $k$-linear and the collection forms a natural transformation.
 
 For $(a_1,a_2)$ in $F(A)^{\times 2}\simeq F(A\oplus A)$, we find that $a_1+a_2$ is the image of $(a_1,a_2)$ under $F$ acting on the diagonal $A\to (A\oplus A)$. By assumption the same must be true for $a_1',a_2'$ and $G$, whence $(a_1+a_2)'=a_1'+a_2'$. That $(\lambda a)'=\lambda a'$ for $\lambda\in k$ follows similarly but more easily. Naturality is also satisfied by definition.
\end{proof}

\begin{remark}
Lemma~\ref{LemYon} does not remain true without the additivity assumption on $\ba$. For example, let $k$ be a field and set $\ba=\underline{k}$. With notation from~\ref{notation}, for a vector space $V\in \PSh\underline{k}$, the colimit in Lemma~\ref{LemYon} is the cokernel of
$$k({V\times k})\;\to\; kV,\quad e_{(v,\lambda)}\mapsto \lambda e_v-e_{\lambda v}.$$
For $|\cdot|:kV\to k$ given by $\sum_v\lambda_ve_v\mapsto \sum_v\lambda_v$, this cokernel is given by
$$k{V}\;\tto\; k{\mP V}, \quad e_v\mapsto |v|e_{[v]},$$
with $[\cdot]:V\to\mP V$ the canonical projection.
However, $k{\mP V}\not\simeq V$ if $\dim_kV>1$.
\end{remark}

 Recall our convention from~\ref{creator} concerning right adjoints.
\begin{lemma}\label{LemRefl}
Consider $u\in[\ba, \bC]$ for which the right adjoint $R:\bC\to\PSh\ba$, $M\mapsto\bC(u-,M)$ is fully faithful ($u$ is `dense'). 
\begin{enumerate}[label=(\roman*)]
\item Assume that $\ba$ is additive. For an object $M\in\bC$, consider the (comma) category $\bj_M$ with as objects pairs $(A,f)$ with $A\in\ba$ and $f:u(A)\to M$, and where a morphism $(A,f)\to (B,g)$ is a morphism $a:A\to B$ in $\ba$ such that $u(a),f,g$ yield a commutative diagram. For the obvious forgetful functor $J_M:\bj_M\to \ba$, we have a canonical isomorphism
$$\colim(u\circ J_M)\;\stackrel{\sim}{\to}\; M.$$
\item Consider $k$-linear cocontinuous functors $H_1,H_2:\bC\to\bB$ and a natural transformation $\eta:H_1\to H_2$. If $\eta$ yields an isomorphism $H_1\circ u\stackrel{\sim}{\to} H_2\circ u$ (resp. $\eta_u=0$), then $\eta$ is an isomorphism (resp. zero) too.
\end{enumerate}
\end{lemma}
\begin{proof}
Clearly, $\bj_M$ is equivalent to the category $\el:=\el(R M)$. 
We can thus use $\colim (\Yon\circ J)\stackrel{\sim}{\to} R M$, for the forgetful $J:\el\to \ba$, from Lemma~\ref{LemYon}. Part (i) then follows from the isomorphisms, natural in $N\in\bC$:
\begin{eqnarray*}
\bC(M,N)&\stackrel{\sim}{\to}&\PSh\ba(R M,R N)\;\stackrel{\sim}{\to}\;\lim \PSh\ba(\Yon\circ J,R N)\;\stackrel{\sim}{\to}\;\lim\PSh\ba(\Yon\circ J_M,R N)\\
&\stackrel{\sim}{\to}&\lim \bC(u\circ J_M,N)\;\stackrel{\sim}{\to}\;\bC(\colim u\circ J_M,N).
\end{eqnarray*}
Part (ii) is an immediate application of part (i), since we can replace $\ba$ by its additive envelope. 
\end{proof}

\begin{proof}[Proof of Proposition~\ref{PropUni}]
For a reflective subcategory $\bC$ of $\PSh\ba$, the inclusion $\tI:\bC\to\PSh\ba$ is the right adjoint of $\tZ:\ba\to\bC$ (more precisely of $\tS$). We can thus apply Lemma~\ref{LemRefl}.

Since $\ba$ is essentially small and $\bB$ is cocomplete, for any $k$-linear functor $h:\ba\to\bB$, we have the left Kan extension 
$$L(h):=\Lan_{\tZ}(h)\,:\,\Sh_{\cS}\ba\to\bB$$ along $\tZ$, see \cite[Chapter X]{Mac} or \cite[Chapter~4]{Ke}. By definition, there exists a natural transformation $h\to L(h)\circ\tZ$ which induces an isomorphism
$$\Nat(L(h),G)\;\simeq\;\Nat(h,G\circ \tZ),$$
for any $k$-linear functor $G:\Sh_{\cS}\ba\to\bB$.
In other words, we have an adjunction
\begin{equation}\label{EqAdj}\xymatrix{[\ba,\bB]\ar@/^/[rr]^{L(-)}&&[\Sh_{\cS}\ba,\bB]\ar@/^/[ll]^{-\circ\tZ},}\end{equation}
and we will show it restricts to an equivalence on the requested subcategories.

For a sequence \eqref{EqSeq} in $\cS$, the sequence
$$\bigoplus_\gamma \tZ(Z_\gamma)\;\to\;\bigoplus_\beta \tZ(Y_\beta)\;\to\; \tZ(X)\;\to\; 0$$
is exact in $\Sh_\cS\ba$, by Corollary~\ref{CorTriv}. Since $H\in[\Sh_{\cS}\ba,\bB]_{cc}$ is right exact, it maps the above exact sequence to an exact one. Since $H$ commutes with coproducts, it follows that $h:=H\circ\tZ$ is in $[\ba,\bB]_{\cS}$, which means $-\circ\tZ$ restricts indeed to a functor $[\Sh_{\cS}\ba,\bB]_{cc}\to [\ba,\bB]_{\cS}$.

Now assume that $h\in  [\ba,\bB]_{\cS}$, then we have the functor 
$$\hat{h}:\bB\to\Sh_{\cS}\ba,\; M\mapsto \bB(h-,M).$$
We claim that $(L(h),\hat{h})$ is an adjoint pair, which shows in particular that $L(h)$ is cocontinuous. Indeed, this standard property can be derived from the coend expression
$$\Lan_{\tZ}h(F)\;=\;\int^{X\in\ba}\Sh_{\cS}\ba(\tZ X,F)\odot_kh(X)\;\simeq\;\int^{X\in\ba}F(X)\odot_kh(X),$$
for $F\in\Sh_{\cS}\ba$, see~\cite[Equation~(4.25)]{Ke}.
In the above formula, $N\odot_k B$ for $N\in \Mod$ and $B\in\bB$ stands for the object in $\bB$ with the universal property $\bB(N\odot_k B,-)\simeq\Hom_k(N,\bB(B,-))$.

Hence \eqref{EqAdj} restricts to an adjunction between  $[\ba,\bB]_{\cS}$ and $[\Sh_{\cS}\ba,\bB]_{cc}$. 
The defining morphism $h\to L(h)\circ\tZ$ is actually an isomorphism, as can be seen from the above co-end expression and the co-Yoneda lemma.
In other words, the unit of $L(-)\dashv -\circ \tZ$ is an isomorphism. 

To prove that $L(-)$ and $-\circ\tZ$ are inverses, it is now enough to show that the right adjoint, $-\circ \tZ$, reflects isomorphisms. The latter is precisely Lemma~\ref{LemRefl}(ii).
\end{proof}

\begin{remark}
Consider a cocontinuous functor $H:\Sh(\ba,\cT)\to\bB$, with $\cT$ a topology. Define $h:=H\circ\tZ$. The above proof shows that $H\simeq \Lan_{\tZ}h$ and its right adjoint is given by $M\mapsto \bB(h-,M)$.
\end{remark}

\subsubsection{} Denote by $\Top(\ba)$ the partially ordered (by inclusion) set of (linear) Grothendieck topologies on $\ba$. We interpret $\Top(\ba)$ as a category and we will show that $\Top(\ba)$ is equivalent to category which is an obvious truncation of a more natural $2$-category.

We introduce the category $\Loc(\ba)$. Objects in $\Loc(\ba)$ are creators $u:\ba\to \bC$. Morphisms $u\to u'$ are the isomorphism classes of cocontinuous $k$-linear functors $\bC\to \bC'$ which yield a commutative diagram with $\bC\xleftarrow{u}\ba\xrightarrow{u'}\bC'$. 

\begin{corollary}\label{CorPOEquiv}
The assignment $\cT\mapsto \{\tZ:\ba\to \Sh(\ba,\cT)\}$ yields an equivalence $\Top(\ba)\stackrel{\sim}{\to}\Loc(\ba)$.
\end{corollary}
\begin{proof}
Consider two Grothendieck topologies $\cT_1,\cT_2$, set $\cS_1=\preo(\cT_1)$ and consider $\tZ_i:\ba\to \Sh(\ba,\cT_i)$. By Proposition~\ref{PropUni}, we have an equivalence
$$-\circ \tZ_1:\;[\Sh_{\cS_1}\ba,\Sh(\ba,\cT_2)]\;\stackrel{\sim}{\to}\;[\ba,\Sh(\ba,\cT_2)]_{\cS_1}.$$ 
Functors $F$ in the left category for which there exists an isomorphism $F\circ \tZ_1\simeq\tZ_2$ thus exist (and are then all isomorphic) if and only if $\tZ_2$ maps all sequences in $\cS_1$ to right exact sequences in $\Sh(\ba,\cT_2)$. That condition is a reformulation of $\preo(\cT_1)\subset\pre(\cT_2)$, which we claim is equivalent with $\cT_1\subset\cT_2$. Indeed, by Theorem~\ref{ThmTPT}(iii) and (iv), $\preo(\cT_1)\subset\pre(\cT_2)$ implies $\cT_1\subset\cT_2$. Also by Theorem~\ref{ThmTPT}(iv), $\cT_1\subset\cT_2$ implies $\preo(\cT_1)\subset\preo(\cT_2)$. However, we have $\preo(\cT_2)\subset\pre(\cT_2)$.
 This demonstrates we can extend the assignment in the corollary to a fully faithful functor $\Top(\ba)\to\Loc(\ba)$. The functor is essentially surjective by Theorem~\ref{ThmBQ}.
\end{proof}

\begin{remark}
One can verify that the morphisms in $\Loc(\ba)$ automatically correspond to {\em exact} functors.
\end{remark}


\section{Properties of linear topologies and pretopologies}\label{PropTop}

\subsection{Noetherian topologies}
For the entire subsection, we let $\kappa$ be an infinite regular cardinal. 

\begin{definition}\label{DefNoe}
A Grothendieck topology $\cT$ is {\bf $\kappa$-noetherian} if for every $X\in\ba$, the object $\tZ(X)$ is $\kappa$-compact in $\Sh(\ba,\cT)$.
\end{definition}

It seems difficult to characterise noetherianity directly from the topology itself, see for instance Example~\ref{ExNoe}. In the following proposition we investigate the connection with the generation and presentation of the sieves in the topology. Recall from Lemma~\ref{LemCompact} that a sieve $R$ is $\kappa$-compact in $\PSh\ba$ if and only if it has a $\kappa$-bounded presentation by representable presheaves. Similarly, $R\subset\ba(-,X)$ is $\kappa$-generated in  $\PSh\ba$ if $R=R_v$ for some 
$\kappa$-bounded $v:\amalg_\beta Y_\beta\to X$.

\begin{prop}\label{PropNoe}
\begin{enumerate}[label=(\roman*)]
\item If for every $X\in\ba$ and $R\in \cT(X)$ there exists $\cT(X)\ni R'\subset R$ such that $R'$ is $\kappa$-compact in $\PSh\ba$, then $\cT$ is $\kappa$-noetherian.
\item The following are equivalent:
\begin{enumerate}
\item For every $X\in\ba$ and $R\in \cT(X)$ there exists $\cT(X)\ni R'\subset R$ such that $R'$ is $\kappa$-generated.
\item For every $X\in\ba$, the object $\tZ(X)$ is $\kappa$-generated in $\Sh(\ba,\cT)$.
\end{enumerate}
\end{enumerate}
\end{prop}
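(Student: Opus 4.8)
The statement to prove is Proposition~\ref{PropNoe}. Throughout I will use Lemma~\ref{LemCompact} (characterising $\kappa$-compact and $\kappa$-generated presheaves by $\kappa$-bounded presentations/epimorphisms) and the sheafification formalism of \ref{sheafif}, especially that $\tZ=\tS\circ\Yon$, that $\tS$ is exact and cocontinuous, and that $\tS$ maps the inclusions $R\hookrightarrow\Yon(X)$ with $R\in\cT(X)$ to isomorphisms (Theorem~\ref{ThmBQ}). I will also need Lemma~\ref{LemReflComp}(iii), or rather the direct observation that $\tS$ being left adjoint to the fully faithful $\tI$ gives, for $F\in\PSh\ba$ and $G\in\Sh(\ba,\cT)$, the natural isomorphism $\Sh(\ba,\cT)(\tS F,G)\simeq\PSh\ba(F,\tI G)$.

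\emph{Part (i).} Fix $X\in\ba$ and let $J:\bj\to\Sh(\ba,\cT)$ be a $\kappa$-filtered diagram with colimit $G=\colim J$ (computed in $\Sh(\ba,\cT)$; note $\tI G$ need not be $\colim(\tI\circ J)$ in $\PSh\ba$). Write $R'\in\cT(X)$ for a $\kappa$-compact sub-sieve of $\ba(-,X)$; such exists by hypothesis. Since $\tS$ sends $R'\hookrightarrow\Yon(X)$ to an isomorphism, $\tZ(X)=\tS\Yon(X)\simeq\tS R'$, so it suffices to show $\tS R'$ is $\kappa$-compact. Using the adjunction we compute
\[
\Sh(\ba,\cT)(\tS R',G)\;\simeq\;\PSh\ba(R',\tI G).
\]
Now I want to move the colimit outside. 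The subtlety is that $\tI G$ is the sheafification-colimit, not the presheaf colimit $G':=\colim(\tI\circ J)$; but there is a canonical map $G'\to\tI G$ which becomes an isomorphism after applying $\tS$ (as $\tS$ is cocontinuous and $\tS\tI\simeq\id$). Since $R'$ is $\kappa$-compact in $\PSh\ba$, $\PSh\ba(R',G')\simeq\colim\PSh\ba(R',\tI J(-))\simeq\colim\Sh(\ba,\cT)(\tS R',J(-))$. The remaining point is that $\PSh\ba(R',G')\to\PSh\ba(R',\tI G)$ is a bijection. For this I use that $R'$ is a subobject of the sheaf $\Yon(X)$ with sheaf quotient $0$ (i.e.\ $\tS$ kills $\Yon(X)/R'$), together with left-exactness of $\tS$ and the fact that a map out of $R'$ into a sheaf factors uniquely through $\tS R'\simeq\tS\Yon(X)$; unravelling, maps $R'\to\tI G$ are the same as maps $\tZ(X)\to G$. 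Assembling these isomorphisms gives $\Sh(\ba,\cT)(\tZ(X),\colim J)\simeq\colim\Sh(\ba,\cT)(\tZ(X),J)$, i.e.\ $\tZ(X)$ is $\kappa$-compact, so $\cT$ is $\kappa$-noetherian.

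\emph{Part (ii).} For (a)$\Rightarrow$(b): given $X$, pick $\kappa$-generated $R'\in\cT(X)$, so by Lemma~\ref{LemCompact}(iv) there is an epimorphism $\oplus_{\beta\in B}\Yon(Y_\beta)\tto R'$ in $\PSh\ba$ with $|B|<\kappa$. Composing with $R'\hookrightarrow\Yon(X)$ and applying the exact functor $\tS$: since $\tS(R'\hookrightarrow\Yon(X))$ is an isomorphism, we get an epimorphism $\oplus_{\beta\in B}\tZ(Y_\beta)\tto\tZ(X)$ in $\Sh(\ba,\cT)$ with $|B|<\kappa$, hence $\tZ(X)$ is $\kappa$-generated by Lemma~\ref{LemCompact}(iv) applied in the Grothendieck category $\Sh(\ba,\cT)$ (using that $\{\tZ(A)\}_{A\in\ba}$ generates). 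Conversely (b)$\Rightarrow$(a): if $\tZ(X)$ is $\kappa$-generated, Lemma~\ref{LemCompact}(i) applied to any presentation shows there is $B$ with $|B|<\kappa$ and an epimorphism $e:\oplus_{\beta\in B}\tZ(Y_\beta)\tto\tZ(X)$; pick morphisms $Y_\beta\to X$ lifting the components (possible since $\Sh(\ba,\cT)(\tZ(Y_\beta),\tZ(X))\simeq\PSh\ba(\Yon(Y_\beta),\tI\tZ(X))$ and every element of the latter, post-composed with $\eta_X$, is the image of a morphism in $\ba$ after refining---more carefully, use that $\tZ(Y_\beta)\to\tZ(X)$ need not come from $\ba$, but the composite $\Yon(Y_\beta)\to\tI\tZ(X)$ does factor, after pulling back along $\eta_X$, through a sieve; this is the technical heart). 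Let $v:\amalg_{\beta\in B}Y_\beta\to X$ assemble these and let $R'=R_v\subset\ba(-,X)$, which is $\kappa$-generated. Then $\tS R_v\to\tZ(X)$ equals $e$ up to the identifications, hence is an epimorphism; combined with $\tS R_v\hookrightarrow\tS\Yon(X)=\tZ(X)$ being a monomorphism (left-exactness of $\tS$), it is an isomorphism, so by Theorem~\ref{ThmBQ}(ii)(b), $R_v\in\cT(X)$. Thus (a) holds.

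\emph{Main obstacle.} The delicate point in both parts is the interaction between colimits/epimorphisms in $\PSh\ba$ and in $\Sh(\ba,\cT)$: $\tI$ is not cocontinuous, so I cannot naively transport a $\kappa$-bounded presentation back and forth. The clean way around this is to always push forward along the \emph{exact cocontinuous} $\tS$ and to exploit that $\tS$ inverts the sieve inclusions $R\hookrightarrow\Yon(X)$; conversely, to go from data in $\Sh(\ba,\cT)$ back to a sieve in $\cT(X)$, the key is Theorem~\ref{ThmBQ}(ii): a sub-sieve lies in $\cT(X)$ iff $\tS$ maps its inclusion into $\Yon(X)$ to an isomorphism, which reduces the converse direction to checking that $\tS R_v\to\tZ(X)$ is epi (it is automatically mono). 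I expect the fiddliest bookkeeping to be in (b)$\Rightarrow$(a), lifting the components of the epimorphism $e$ to honest morphisms $Y_\beta\to X$ in $\ba$ so that $R_v$ is literally a sieve on $X$ in $\PSh\ba$; this is handled by the standard fact that $\Yon(Y_\beta)$ is tiny/representable and that $\Sh(\ba,\cT)(\tZ(Y_\beta),\tZ(X))\simeq(\tI\tZ(X))(Y_\beta)=\Sigma\Sigma\Yon(X)(Y_\beta)=\varinjlim_{S\in\cT(Y_\beta)}\Nat(S,\Sigma\Yon(X))$, so each component is represented, on some refinement, by an actual morphism, and passing to the intersection of finitely many such refinements (using (T5), and $|B|<\kappa$ with $\kappa$ infinite) yields the desired $v$.
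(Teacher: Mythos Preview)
Your argument for part (i) has a genuine gap. You select a single $\kappa$-compact sieve $R'\in\cT(X)$ (contained in the maximal sieve $\ba(-,X)$) and try to deduce $\kappa$-compactness of $\tZ(X)\simeq\tS R'$ from $\kappa$-compactness of $R'$. But $\Yon(X)$ itself is always $\kappa$-compact in $\PSh\ba$, so your choice of $R'$ is available for \emph{every} topology, independently of the hypothesis; if your argument worked it would show every topology is $\kappa$-noetherian. Concretely, the ``remaining point'' that $\PSh\ba(R',G')\to\PSh\ba(R',\tI G)$ is a bijection is unjustified: with $G'=\colim(\tI\circ J)$ and $\tI G=\tI\tS G'$, this map is induced by the unit $G'\to\tI\tS G'$, and asking it to be an isomorphism on $\Nat(R',-)$ is essentially the statement you are trying to prove. (Also, $\Yon(X)$ is not a sheaf unless $\cT$ is subcanonical, so the phrase ``$R'$ is a subobject of the sheaf $\Yon(X)$'' is not valid.) The paper's proof uses the hypothesis in full strength: the $\kappa$-compact covering sieves are \emph{cofinal} in $\cT(X)$, so the formula $\Sigma F(X)=\varinjlim_{R\in\cT(X)}\Nat(R,F)$ can be computed over $\kappa$-compact $R$ only, whence $\Sigma$, and therefore $\tI\tS=\Sigma^2$, commutes with $\kappa$-filtered colimits. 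The same issue infects your (a)$\Rightarrow$(b): having a single epimorphism $\bigoplus_{\beta\in B}\tZ(Y_\beta)\tto\tZ(X)$ with $|B|<\kappa$ does not give $\kappa$-generatedness of $\tZ(X)$ unless you already know each $\tZ(Y_\beta)$ is $\kappa$-generated, which is precisely the conclusion; Lemma~\ref{LemCompact}(iv) is stated for $\PSh\ba$ and relies on representables being compact.

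For (b)$\Rightarrow$(a) your sketch misses the requirement that the $\kappa$-generated $R'$ lie \emph{inside the given} $R\in\cT(X)$, and it creates an unnecessary lifting problem (the components $\tZ(Y_\beta)\to\tZ(X)$ need not come from $\ba$). The paper avoids both issues at once: start from a presentation $\bigoplus_b\Yon(J_b)\tto R$ of the given $R$ in $\PSh\ba$ (so the maps $J_b\to X$ are already in $\ba$), apply the exact cocontinuous $\tS$ to obtain an epimorphism $\bigoplus_b\tZ(J_b)\tto\tZ(X)$, use Lemma~\ref{LemCompact}(i) to cut this down to a $\kappa$-small subfamily $\{Y_\beta\to X\}\subset\{J_b\to X\}$ that is still epimorphic after $\tZ$, and let $R'\subset R$ be the sieve they generate; then $\tS(R'\hookrightarrow\Yon(X))$ is an isomorphism, so $R'\in\cT(X)$ by Theorem~\ref{ThmBQ}. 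Your final paragraph's ``intersection of finitely many refinements'' would in any case be inadequate since $|B|<\kappa$ need not be finite.
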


\begin{proof}
For part (i), we consider a functor $J:\bj\to \Sh(\ba,\cT)$ from a $\kappa$-filtered category $\bj$. By cocontinuity of $\tS$, the colimit of $J$ in $\Sh(\ba,\cT)$ is given by $\tS(\colim \tI\circ J)$. Together with the adjunction $\tS\dashv\tI$ this shows
$$\Sh(\ba,\cT)(\tZ(X),\colim J )\;\simeq\; \Nat(\Yon(X),\tI\circ \tS(\colim \tI\circ J)).$$

Recall that $\tI\circ\tS\simeq\Sigma\circ\Sigma$. We can rewrite the direct limit expression for $\Sigma$ in~\eqref{SigmaDL} by restricting to the cofinal (by assumption) subset of $\kappa$-compact sieves in the topology. 
Hence $\Sigma$ commutes with $\kappa$-filtered colimits and $\tZ(X)$ inherits $\kappa$-compactness from $\Yon(X)$.

That (ii)(a) implies (ii)(b) follows as in the proof of part (i), by investigating $\kappa$-filtered colimits of monomorphisms. We need to use additionally that $\tI$ and $\Sigma$ are left exact, see \ref{sheafif}(i).

Now assume that $\tZ(X)$ is $\kappa$-generated and consider $R\in\cT(X)$. Write $R\subset \Yon(X)$ as the image of $\oplus_{b}\Yon(J_b)\to \Yon(X)$ for a collection of morphisms $J_b\to X$. By Theorem~\ref{ThmBQ} and the fact that $\tS$ is cocontinuous, $\oplus_{b}\tZ(J_b)\to \tZ(X)$ is an epimorphism in $\Sh(\ba,\cT)$. By Lemma~\ref{LemCompact}(i), there exists a subset $\{Y_\beta\to X\}$ of $\{J_b\to X\}$ of cardinality $<\kappa$
such that $\oplus_{\beta}\tZ(Y_\beta)\to\tZ(X)$ remains an epimorphism. Denote by $R'\subset R$ the sieve on $X$ generated by the morphisms $Y_\beta \to X$. Since $\tS$ is exact, and again by Theorem~\ref{ThmBQ}, we have $R'\in \cT(X)$.
\end{proof}

It is impossible to improve \ref{PropNoe}(i) to the same form of \ref{PropNoe}(ii), as the following example shows. 

\begin{example}\label{ExNoe}For a commutative ring $K$, and $x\in K$, we have the pretopology on $\underline{K}$
$$\cS\;=\;\{\bullet\xrightarrow{0}\bullet\xrightarrow{x}\bullet\}.$$
The corresponding topology $\cT:=\topp(\cS)$ consists of all ideals in $K$ which contain $x^i$ for some $i\in\mN$.
By Theorem~\ref{ThmNoe} below, $\cT$ is noetherian (see also~\ref{ExamCommut}(i)). However, the premise of Proposition~\ref{PropNoe}(i) is not always satisfied. Indeed, let $k$ be a field and let $K$ be the quotient of the polynomial ring $k[x_i\,|\,i\in\mN]$ in countably many variables by the ideal $\langle x_ix_j\,|\, i\not= j\rangle$, and set $x:=x_0\in K$. No non-zero ideal contained in $Kx_0\in\cT$ is finitely presented.

\end{example}

\begin{theorem}\label{ThmNoe}
The following conditions are equivalent on a topology $\cT$:
\begin{enumerate}[label=(\roman*)] 
\item $\cT$ is $\kappa$-noetherian;
\item $\cT=\topp(\cS)$ for a $\kappa$-bounded pretopology $\cS$;
\item For every functor $J:\bj\to\Sh(\ba,\cT)$ from a $\kappa$-filtered category $\bj$, the canonical morphism $ \colim(\tI\circ J)\to\tI(\colim J)$ is an isomorphism.
\end{enumerate}
\end{theorem}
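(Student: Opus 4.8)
The plan is to prove the cyclic chain of implications $(ii)\Rightarrow(iii)\Rightarrow(i)\Rightarrow(ii)$, since each link is either already available or reduces to a result established earlier in the paper.

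The implication $(ii)\Rightarrow(iii)$ is essentially immediate from Lemma~\ref{LemReflComp}(i): if $\cT=\topp(\cS)$ for a $\kappa$-bounded pretopology $\cS$, then $\Sh(\ba,\cT)=\Sh_{\cS}\ba$ by Theorem~\ref{ThmSheaves}, and Lemma~\ref{LemReflComp}(i) says precisely that the inclusion $\tI:\Sh_{\cS}\ba\hookrightarrow\PSh\ba$ commutes with $\kappa$-filtered colimits, which is statement $(iii)$. The implication $(iii)\Rightarrow(i)$ is a short diagram chase: assuming $(iii)$, for a $\kappa$-filtered $J:\bj\to\Sh(\ba,\cT)$ we compute
$$\Sh(\ba,\cT)(\tZ(X),\colim J)\simeq\PSh\ba(\Yon(X),\tI(\colim J))\simeq\PSh\ba(\Yon(X),\colim(\tI\circ J)),$$
using the adjunction $\tS\dashv\tI$ and then $(iii)$; since $\Yon(X)$ is $\kappa$-compact in $\PSh\ba$ (it is even compact) and $\tI$ is fully faithful, the last term is $\colim\Sh(\ba,\cT)(\tZ(X),J)$, so $\tZ(X)$ is $\kappa$-compact, i.e.\ $\cT$ is $\kappa$-noetherian.

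The substantive implication is $(i)\Rightarrow(ii)$, and this is where I expect the main work to lie. The natural candidate for $\cS$ is a $\kappa$-bounded sub-pretopology of $\pre(\cT)$ (or of $\preo(\cT)$) that is still large enough to recover $\cT$ via $\topp$. Concretely: for each $X\in\ba$ and each $R\in\cT(X)$, I want to produce a $\kappa$-bounded presentation $\oplus_{\gamma\in C}\tZ(Z_\gamma)\to\oplus_{\beta\in B}\tZ(Y_\beta)\to\tZ(X)\to0$ with $|B|,|C|<\kappa$. Since $\cT$ is $\kappa$-noetherian, $\tZ(X)$ is $\kappa$-compact in $\Sh(\ba,\cT)$; starting from any presentation of $R$ by representables in $\PSh\ba$, applying the cocontinuous exact functor $\tS$ gives an exact sequence in $\Sh(\ba,\cT)$ (as in the proof of Proposition~\ref{PropNoe}(ii)), and then Lemma~\ref{LemCompact}(ii) lets me cut the index sets $B$ and $C$ down to size $<\kappa$ while preserving exactness. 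Let $\cS$ be the class of all formal sequences $\amalg_\gamma Z_\gamma\to\amalg_\beta Y_\beta\to X$ obtained this way, ranging over all $X$ and all $R\in\cT(X)$. By construction $\cS\subseteq\pre(\cT)$, so $\topp(\cS)\subseteq\cT$ by Theorem~\ref{ThmTPT}; and $\cS$ is $\kappa$-bounded by construction. It remains to check two things: that $\cS$ is actually a pretopology (verifying (PTa) and (PTb)), and that $\topp(\cS)=\cT$ rather than something smaller.

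For the reverse inclusion $\cT\subseteq\topp(\cS)$: given $R\in\cT(X)$, the chosen sequence in $\cS$ has right-hand morphism $q:\amalg_\beta Y_\beta\to X$ whose associated sieve $R_q$ satisfies $\tS(R_q\hookrightarrow\Yon(X))$ an isomorphism — indeed the epimorphism $\oplus_\beta\tZ(Y_\beta)\tto\tZ(X)$ forces $\tS(\Yon(X)/R_q)=0$ — so by Theorem~\ref{ThmBQ}(ii)(b) we get $R_q\in\cT(X)$; but $R_q\subseteq R$ need not hold on the nose, so I would instead arrange the presentation of $R$ to use generators $Y_\beta\to X$ that factor through $R$, which is automatic since $R$ is a sieve — then $R_q\subseteq R$ and $R_q\in\topp(\cS)(X)$, hence $R\in\topp(\cS)(X)$ by (T4). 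For the pretopology axioms, (PTa) and (PTb) for $\cS$ follow from the corresponding statements for $\pre(\cT)=\cS(\tS)$ established in Lemma~\ref{LemEpiLoc}(ii), together with one more application of $\kappa$-compactness of the relevant $\tZ$-objects to truncate the index sets in the diagrams provided by Lemma~\ref{LemEpiLoc} back down to $<\kappa$; the $\kappa$-filtered/$\kappa$-small bookkeeping here, ensuring that finitely many (or $<\kappa$ many) simultaneous truncations can be performed coherently, is the one genuinely fiddly point, and is handled exactly as in the inductive arguments of Lemma~\ref{LemPTa}. Finally, $(iii)\Rightarrow(iii)$ for $\Sh_{\cS}\ba=\Sh(\ba,\cT)$ is consistent with Lemma~\ref{LemReflComp}(i) applied to this $\cS$, closing the loop.
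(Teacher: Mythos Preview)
Your cyclic strategy $(ii)\Rightarrow(iii)\Rightarrow(i)\Rightarrow(ii)$ and the arguments for the first two implications are correct and match the paper's (both go through Lemma~\ref{LemReflComp} and Theorem~\ref{ThmSheaves}). For $(i)\Rightarrow(ii)$ the paper also truncates using Lemma~\ref{LemCompact}(ii), but makes a cleaner choice of pretopology: it takes $\cS=\pre_\kappa\cT$, the class of \emph{all} $\kappa$-bounded sequences in $\pre(\cT)$, rather than one selected sequence per pair $(X,R)$. This dissolves the fiddly point you anticipate. Since $\pre(\cT)$ is already a pretopology with $\widetilde{\Co}(\pre\cT)=\Co(\pre\cT)$ by Lemma~\ref{LemEpiLoc}(i), the witnesses $q'$ for (PTa) and (PTb) exist in $\Co(\pre\cT)$, and one further truncation lands them in $\Co(\pre_\kappa\cT)$, which \emph{is} $\Co(\cS)$ by construction; the equality $\topp(\pre_\kappa\cT)=\cT$ then drops out of Corollary~\ref{Cor227}(ii). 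Your narrower $\cS$ runs into trouble exactly here: the truncated $q'$ lands in $\pre_\kappa\cT$ but has no reason to lie in your chosen class or in $\widetilde{\Co}(\cS)$, so the verification of (PTa) and (PTb) you sketch does not close without further argument. Enlarging $\cS$ to all of $\pre_\kappa\cT$ is the natural repair, and is precisely what the paper does (packaged as Lemma~\ref{LemNoe}).
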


We start the proof with a lemma.
\begin{lemma}\label{LemNoe}
Consider a $\kappa$-noetherian topology $\cT$. Denote by $\pre_\kappa\cT\subset\pre\cT$ the subclass of all $\kappa$-bounded sequences ($\kappa$-bounded sequences \eqref{EqSeq} which become right exact in $\Sh(\ba,\cT)$). Then $\pre_\kappa\cT$ is a pretopology with $\topp(\pre_\kappa\cT)=\cT$.
\end{lemma}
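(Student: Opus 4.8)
The plan is to verify the two conditions (PTa) and (PTb) for $\pre_\kappa\cT$ directly, using $\kappa$-noetherianity to cut down the covering data to $\kappa$-bounded pieces, and then to check $\topp(\pre_\kappa\cT)=\cT$ via Lemma~\ref{LemTPT}. First I would recall that $\pre\cT=\cS(\tS)$ is already a pretopology by Theorem~\ref{ThmTPT}(i) (equivalently Lemma~\ref{LemEpiLoc}), so I only need to show that in verifying (PTa) and (PTb) for a $\kappa$-bounded sequence in $\pre\cT$, the witnessing element $q'\in\widetilde{\Co}_A(\cS)$ can itself be chosen $\kappa$-bounded, and that the resulting $q'$ lies not merely in $\widetilde{\Co}(\pre\cT)$ but can be taken to already sit at the first level $\Co(\pre_\kappa\cT)$ (which is harmless since $\widetilde{\Co}(\pre_\kappa\cT)\supset\Co(\pre_\kappa\cT)$).

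For (PTa): given $q:\amalg_\beta Y_\beta\to X$ in $\Co(\pre_\kappa\cT)$ (so $|\{\beta\}|<\kappa$ and $\oplus_\beta\tZ(Y_\beta)\tto\tZ(X)$ is epi) and $f:A\to X$, form the pullback $P$ of $\Yon(q)$ and $\Yon(f)$ in $\PSh\ba$; then $\tS$ sends $P\to\Yon(A)$ to an epimorphism onto $\tZ(A)$ since $\tS$ is exact. Now $\tZ(A)$ is $\kappa$-compact by hypothesis, so by Lemma~\ref{LemCompact}(ii) applied to a presentation of $\Sigma$-something — more carefully, choose any presentation $\oplus_\delta\Yon(C_\delta)\tto P$ and push it along the composite to $\tZ(A)$; using $\kappa$-compactness of $\tZ(A)$ together with Lemma~\ref{LemCompact}(i) we extract a subset $\{C_\delta\}_{\delta\in\Delta_0}$ with $|\Delta_0|<\kappa$ such that $\oplus_{\delta\in\Delta_0}\tZ(C_\delta)\tto\tZ(A)$ is still epi. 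The induced $q':\amalg_{\delta\in\Delta_0}C_\delta\to A$ is then $\kappa$-bounded and lies in $\Co(\pre_\kappa\cT)$, and the commutative square is inherited from the pullback. For (PTb): given a $\kappa$-bounded sequence \eqref{EqSeq} in $\pre\cT$ and $f:A\to\amalg_\beta Y_\beta$ with $q\circ f=0$, the point is that $\tS$ sends the $\kappa$-bounded exact sequence $\oplus_\gamma\tZ(Z_\gamma)\to\oplus_\beta\tZ(Y_\beta)\to\tZ(X)\to0$ to an exact sequence and hence $\tZ(f)$ factors through $\oplus_\gamma\tZ(Z_\gamma)$; again form the appropriate pullback/kernel object in $\PSh\ba$, take a presentation by representables, and use $\kappa$-compactness of $\tZ(A)$ plus Lemma~\ref{LemCompact}(i)--(ii) to shrink the indexing set of the witnessing $q':\amalg_\alpha B_\alpha\to A$ to cardinality $<\kappa$ while keeping $\oplus_\alpha\tZ(B_\alpha)\tto\tZ(A)$ epi and $\tZ(q')$ still factoring appropriately; this $q'$ is again $\kappa$-bounded and in $\Co(\pre_\kappa\cT)$.

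Having shown $\pre_\kappa\cT$ is a pretopology, I would finish with $\topp(\pre_\kappa\cT)=\cT$. Since $\pre_\kappa\cT\subset\pre\cT$, Theorem~\ref{ThmTPT}(iv) gives $\topp(\pre_\kappa\cT)\subset\topp(\pre\cT)=\cT$ (using Theorem~\ref{ThmTPT}(iii)). For the reverse inclusion, by Lemma~\ref{LemTPT} it suffices to show $R_q\in\cT(X)$ for every $q\in\Co_X(\pre\cT)$; but given such $q:\amalg_\beta Y_\beta\to X$, so $\oplus_\beta\tZ(Y_\beta)\tto\tZ(X)$ is epi, the $\kappa$-compactness of $\tZ(X)$ together with Lemma~\ref{LemCompact}(i) lets us pass to a $\kappa$-bounded subfamily $\{Y_\beta\to X\}_{\beta\in B_0}$ with $\oplus_{\beta\in B_0}\tZ(Y_\beta)\tto\tZ(X)$ still epi; this subfamily defines $q_0\in\Co_X(\pre_\kappa\cT)$ with $R_{q_0}\subset R_q$, and $R_{q_0}\in\topp(\pre_\kappa\cT)(X)\subset\cT(X)$, so $R_q\in\cT(X)$ by (T4). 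Alternatively, and perhaps cleaner, one can invoke the reverse direction of the argument in the proof of Proposition~\ref{PropNoe}(ii): every $R\in\cT(X)$ contains a $\kappa$-generated $R'\in\cT(X)$, which by Lemma~\ref{LemCompact}(iii) even has a $\kappa$-bounded presentation witnessing $R'=R_{q_0}$ for $q_0\in\Co_X(\pre_\kappa\cT)$, whence $R\in\topp(\pre_\kappa\cT)(X)$.

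I expect the main obstacle to be the bookkeeping in (PTb): one must be careful that after shrinking the index set of $\amalg_\alpha B_\alpha$ one still has \emph{both} the factorisation $\tZ(f)\colon\tZ(A)\to\oplus_\gamma\tZ(Z_\gamma)$ compatible with $\oplus_\alpha\tZ(B_\alpha)\to\tZ(A)$ \emph{and} epimorphicity of $\oplus_\alpha\tZ(B_\alpha)\to\tZ(A)$, which may require invoking Lemma~\ref{LemCompact}(ii) (the ``exact sequence'' version) rather than just part (i), and threading the pullback/kernel computations through the exact functor $\tS$. The analogous $\kappa$-truncation step already appears inside the proof of Theorem~\ref{ThmTPT} and Proposition~\ref{PropNoe}, so this should be a matter of assembling existing pieces rather than a genuinely new difficulty; everything else reduces to right exactness of $\tS$, $\kappa$-compactness of the $\tZ(X)$, and Lemma~\ref{LemCompact}.
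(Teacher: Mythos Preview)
Your approach is correct and essentially the same as the paper's; the paper just organises it more economically. Rather than re-running the pullback argument for (PTa)/(PTb) with truncation built in, the paper first isolates the single observation that any sequence in $\pre\cT$ can be truncated to one in $\pre_\kappa\cT$ (Lemma~\ref{LemCompact}(ii) applied in $\Sh(\ba,\cT)$), and then uses that $\pre\cT$ is already a pretopology with $\Co(\pre\cT)=\widetilde{\Co}(\pre\cT)$: the witnesses $q'\in\Co(\pre\cT)$ from (PTa)/(PTb) are simply truncated, with the accompanying $f'$ restricted componentwise, so your (PTb) bookkeeping worry evaporates. For $\topp(\pre_\kappa\cT)=\cT$ the paper invokes Corollary~\ref{Cor227}(ii) in one line.

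One slip in your write-up: in the Lemma~\ref{LemTPT} step you say it suffices to show $R_q\in\cT(X)$ and then conclude $R_q\in\cT(X)$, which is circular. What you need (and what your argument with $R_{q_0}\subset R_q$ and (T4) actually establishes) is $R_q\in\topp(\pre_\kappa\cT)(X)$; replace $\cT$ by $\topp(\pre_\kappa\cT)$ throughout that paragraph.
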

\begin{proof}

By applying Lemma~\ref{LemCompact}(ii) for $\Sh(\ba,\cT)$ to any sequence in $\pre(\cT)$, we can cut it down to a $\kappa$-bounded sequence which is still in $\pre(\cT)$.

Since $\pre(\cT)$ is a pretopology, it follows that for $\pre_\kappa(\cT)$ the analogues of (PTa) and (PTb) are satisfied where we only require $q'$ to be in $\Co_A(\pre\cT)=\widetilde{\Co}_A(\pre\cT)$. By the first paragraph, we can replace $q'$ by an element of $\Co(\pre_\kappa\cT)$, which yields the requested commutative diagram. That $\topp(\pre_\kappa(\cT))=\cT$ follows from Corollary~\ref{Cor227}(ii) and the first paragraph. 
\end{proof}

\begin{proof}[Proof of Theorem~\ref{ThmNoe}]
Lemma~\ref{LemNoe} shows that (i) implies (ii). By Theorem~\ref{ThmSheaves}, that (ii) implies (iii) is a special case of Lemma~\ref{LemReflComp}(i). By the same reason, that (iii) implies (i) is already observed in the proof of Lemma~\ref{LemReflComp}.
\end{proof}

\begin{corollary}\label{CorNoe}
For a family $\{\cT_i\,|\,i\in I\}$ of $\kappa$-noetherian topologies, the topology $\vee_{i\in I}\cT_i$ is also $\kappa$-noetherian.
\end{corollary}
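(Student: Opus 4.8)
The plan is to combine Theorem~\ref{ThmNoe} with Theorem~\ref{ThmTPT}(v). Since each $\cT_i$ is $\kappa$-noetherian, Theorem~\ref{ThmNoe} (the implication (i)$\Rightarrow$(ii)) provides, for every $i\in I$, a $\kappa$-bounded pretopology $\cS_i$ with $\topp(\cS_i)=\cT_i$; concretely one may take $\cS_i=\pre_\kappa(\cT_i)$ as in Lemma~\ref{LemNoe}.

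Now form the class of sequences $\cS:=\cup_{i\in I}\cS_i$. By the remark following Lemma~\ref{LemPTa}, the union of a family of pretopologies is again a pretopology, so $\cS$ is a pretopology. Moreover $\cS$ is $\kappa$-bounded, since every sequence in $\cS$ lies in some $\cS_i$ and is therefore $\kappa$-bounded. By Theorem~\ref{ThmTPT}(v) we have $\topp(\cS)=\topp(\cup_i\cS_i)=\vee_{i\in I}\cT_i$. Thus $\vee_{i\in I}\cT_i$ is of the form $\topp(\cS)$ for a $\kappa$-bounded pretopology $\cS$, so by the implication (ii)$\Rightarrow$(i) of Theorem~\ref{ThmNoe} it is $\kappa$-noetherian.

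There is essentially no obstacle here: the whole point of developing the pretopology formalism — in particular the fact that unions of pretopologies are pretopologies (Lemma~\ref{LemPTa} ff.) together with the characterisation $\vee_i\cT_i=\topp(\cup_i\cS_i)$ of Theorem~\ref{ThmTPT}(v) — is precisely to make statements like this immediate, in contrast to the difficulty of working with topologies directly. The only point requiring a moment's care is that $\kappa$-boundedness is preserved under taking unions, which is clear. One could note as an aside that the case $|I|=2$ already shows why working at the level of topologies alone would be awkward, since $\cT_1\vee\cT_2$ need not equal $\cup$ of anything canonical, whereas $\cS_1\cup\cS_2$ is transparent.
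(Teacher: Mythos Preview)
Your proof is correct and is precisely the argument the paper has in mind: the paper's proof is the single sentence ``By Theorem~\ref{ThmTPT}(v), this follows from Theorem~\ref{ThmNoe}'', and you have simply unpacked this by choosing $\kappa$-bounded pretopologies $\cS_i$ via Theorem~\ref{ThmNoe}, taking their union, and applying Theorem~\ref{ThmTPT}(v) and Theorem~\ref{ThmNoe} again.
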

\begin{proof}
By Theorem~\ref{ThmTPT}(v), this follows from Theorem~\ref{ThmNoe}
\end{proof}

\begin{corollary}
Consider a $\kappa$-compact creator $u:\ba\to\bC$ of a Grothendieck category $\bC$. Then $\bC$ is equivalent to the full subcategory of $F\in\PSh\ba$ for which
$$0\;\to\; F(X)\;\to\;\prod_\beta F(Y_\beta)\;\to \; \prod_\gamma F(Z_\gamma)$$
is exact for every $\kappa$-bounded sequence \eqref{EqSeq} for which the following is exact:
$$\bigoplus_\gamma u(Z_\gamma)\;\to\;\bigoplus_\beta u(Y_\beta)\;\to\; u(X)\;\to\;0.$$
\end{corollary}
\begin{proof}
This is an immediate application of Lemma~\ref{LemNoe} and Theorem~\ref{ThmSheaves}.
\end{proof}
In case $\kappa=\aleph_0$ and $\ba$ is additive, we have the following obvious simplification.

\begin{lemma}\label{Lem2bound}
Assume that $\ba$ is additive. 
\begin{enumerate}[label=(\roman*)]
\item For a topology $\cT$ on $\ba$, the following are equivalent:
\begin{enumerate}
\item $\cT$ is noetherian;
\item  $\cT=\topp(\cS)$ for a pretopology $\cS$ consisting of sequences $Z\to Y\to X$ in $\ba$;
\item $\pre_2\cT$ is a pretopology and $\cT=\topp(\pre_2\cT)$.
\end{enumerate}
\item Consider a compact creator $u:\ba\to\bC$ of a Grothendieck category $\bC$. Then $\bC$ is equivalent to the category of functors $F:\ba^{\op}\to \Mod$ such that $0\to F(X)\to F(Y)\to F(Z)$ is exact for every sequence $Z\to Y\to X$ in $\ba$ for which $u(Z)\to u(Y)\to u(X)\to0$ is exact in $\bC$.
\end{enumerate}
\end{lemma}

In case $u$ is fully faithful, Lemma~\ref{Lem2bound}(ii) was proved in \cite[Theorem~3.3.1]{Sch2}.


\subsection{Subcanonical topologies}

\begin{definition}\label{defsub}
A topology $\cT$ is {\bf subcanonical} if every representable presheaf is a $\cT$-sheaf.
\end{definition}

\begin{theorem}\label{ThmSub}
The following are equivalent.
\begin{enumerate}[label=(\alph*)]
\item $\cT$ is subcanonical.
\item $\cT=\topp(\cS)$ for some pretopology $\cS$ comprising only right exact sequences.
\item If $\cT=\topp(\cS)$ for some pretopology $\cS$, then $\cS$ consists of right exact sequences.
\item $\pre(\cT)$ consists of right exact sequences.
\item $\tZ:\ba\to\Sh(\ba,\cT)$ is fully faithful.
\end{enumerate}
\end{theorem}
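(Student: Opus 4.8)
The plan is to establish the cycle of implications (a) $\Rightarrow$ (d) $\Rightarrow$ (c) $\Rightarrow$ (b) $\Rightarrow$ (e) $\Rightarrow$ (a), leaning throughout on the dictionary between sheaves and right exact sequences supplied by Theorems~\ref{ThmSheaves} and~\ref{ThmTPT}, together with the characterisation of $\pre(\cT)=\cS(\tS)$ from Lemma~\ref{LemEpiLoc}. The key reformulation to keep in mind is that a formal sequence \eqref{EqSeq} is \emph{right exact} precisely when $0\to\ba(X,A)\to\prod_\beta\ba(Y_\beta,A)\to\prod_\gamma\ba(Z_\gamma,A)$ is exact for every $A$, i.e.\ when every representable $\ba(-,A)$ satisfies the sheaf-type condition of Theorem~\ref{ThmSheaves} for that single sequence.

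First I would prove (a) $\Rightarrow$ (d). Suppose $\cT$ is subcanonical, so every $\ba(-,A)\in\Sh(\ba,\cT)$. By Theorem~\ref{ThmSheaves} applied to the pretopology $\pre(\cT)$, which has $\topp(\pre(\cT))=\cT$ by Theorem~\ref{ThmTPT}(iii), each representable presheaf $F=\ba(-,A)$ satisfies that $0\to F(X)\to\prod_\beta F(Y_\beta)\to\prod_\gamma F(Z_\gamma)$ is exact for every sequence \eqref{EqSeq} in $\pre(\cT)$; this is exactly the statement that every such sequence is right exact. The implications (d) $\Rightarrow$ (c) and (c) $\Rightarrow$ (b) are then formal: (d) $\Rightarrow$ (c) because any pretopology $\cS$ with $\topp(\cS)=\cT$ satisfies $\cS\subseteq\pre(\topp\cS)=\pre(\cT)$ by Corollary~\ref{CorTriv}, so its members are right exact once those of $\pre(\cT)$ are; and (c) $\Rightarrow$ (b) because $\cT=\topp(\pre(\cT))$ exhibits one pretopology presenting $\cT$, to which (c) applies. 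For (b) $\Rightarrow$ (e): if $\cT=\topp(\cS)$ with $\cS$ consisting of right exact sequences, then by Theorem~\ref{ThmSheaves} we have $\Sh(\ba,\cT)=\Sh_{\cS}\ba$, and right exactness of each sequence in $\cS$ says precisely that every representable presheaf lies in $\Sh_{\cS}\ba$; combined with fully faithfulness of the Yoneda embedding $\Yon:\ba\to\PSh\ba$ and the fact that $\tZ=\tS\circ\Yon$ agrees with $\Yon$ on the subcategory $\Sh_{\cS}\ba$ of $\PSh\ba$ (since $\tS$ restricted to sheaves is the identity), we conclude $\tZ$ is fully faithful. Finally (e) $\Rightarrow$ (a): if $\tZ$ is fully faithful, then for each $A\in\ba$ the sheafification $\tS(\ba(-,A))=\tZ(A)$ satisfies $\Nat(\Yon(X),\tZ(A))\simeq\Sh(\ba,\cT)(\tZ(X),\tZ(A))\simeq\ba(X,A)\simeq\Nat(\Yon(X),\ba(-,A))$ naturally in $X$, so the unit $\ba(-,A)\to\tI\tS(\ba(-,A))$ is an isomorphism, i.e.\ $\ba(-,A)$ is a sheaf.

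The main obstacle I anticipate is the careful bookkeeping in (b) $\Rightarrow$ (e) and (e) $\Rightarrow$ (a): one must be precise about the distinction between the presheaf $\ba(-,A)$, its image $\Yon(A)\in\PSh\ba$, its sheafification $\tZ(A)\in\Sh(\ba,\cT)$, and the unit map connecting them, and about why "every representable is a sheaf" is equivalent to "the sheafification functor does not change representables" which is in turn equivalent to "$\tZ$ is fully faithful". Everything else is a repackaging of Theorems~\ref{ThmSheaves} and~\ref{ThmTPT} and Corollary~\ref{CorTriv}, so the real content is organising these equivalences cleanly rather than proving anything genuinely new; once the correspondence between right exact formal sequences and the sheaf condition for representables is made explicit, the cycle closes with only routine adjunction manipulations.
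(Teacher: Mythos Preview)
Your proposal is correct and uses the same ingredients as the paper's proof (Theorem~\ref{ThmSheaves}, Theorem~\ref{ThmTPT}(iii), Corollary~\ref{CorTriv}, and the unit of $\tS\dashv\tI$), only with a different ordering of the cycle: the paper runs (d)$\Rightarrow$(c)$\Rightarrow$(b)$\Rightarrow$(a)$\Rightarrow$(e)$\Rightarrow$(d), whereas you run (a)$\Rightarrow$(d)$\Rightarrow$(c)$\Rightarrow$(b)$\Rightarrow$(e)$\Rightarrow$(a). The only substantive difference is that the paper closes the loop via (e)$\Rightarrow$(d) by applying $\Sh(\ba,\cT)(-,\tZ(A))$ to the defining exact sequence of $\pre(\cT)$ and invoking full faithfulness, which is marginally cleaner than your (e)$\Rightarrow$(a) since it avoids having to verify that the chain of natural isomorphisms you write down really is induced by the unit $\eta_{\Yon(A)}$; but your argument there is also fine once naturality of $\eta$ is used.
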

\begin{proof}
By Corollary~\ref{CorTriv}, we have $\cS\subset\pre(\cT)$ when $\cT=\topp(\cS)$. Hence (d) implies (c). That (c) implies (b) is obvious. That (b) implies (a) follows from Theorem~\ref{ThmSheaves}.
We have $\tI\circ \tZ\simeq\Yon$ when the image of $\Yon$ consists of sheaves, see~\ref{sheafif}(ii), so (a) implies (e).
To prove that (e) implies (d), consider a sequence \eqref{EqSeq} such that 
$$\bigoplus_\gamma\tZ(Z_\gamma)\to \bigoplus_\beta\tZ(Y_\beta)\to\tZ(X)\to0$$
is exact in $\Sh(\ba,\cT)$. This means in particular that application of $\Sh(\ba,\cT)(-,\tZ(A))$ yields an exact sequence for each $A\in\ba$. Since $\tZ$ is fully faithful, we find that
$$0\to \ba(X,A)\to\prod_\beta\ba(Y_\beta,A)\to\prod_\gamma\ba(Z_\gamma,A)$$
is indeed exact. 
\end{proof}

\begin{corollary}\label{CorCan}For a family $\{\cT_i\,|\,i\in I\}$ of subcanonical topologies, the topology $\vee_{i\in I}\cT_i$ is also subcanonical.

\end{corollary}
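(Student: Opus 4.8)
The plan is to deduce Corollary~\ref{CorCan} directly from Theorem~\ref{ThmSub} together with Theorem~\ref{ThmTPT}(v), exactly in the spirit of how Corollary~\ref{CorNoe} follows from Theorem~\ref{ThmNoe}. First I would pick, for each $i\in I$, a pretopology $\cS_i$ with $\topp(\cS_i)=\cT_i$ and $\cS_i$ consisting only of right exact sequences; such a choice exists by the equivalence (a)$\Leftrightarrow$(b) in Theorem~\ref{ThmSub}, since each $\cT_i$ is subcanonical. (One may simply take $\cS_i=\preo(\cT_i)$, which consists of right exact sequences by Theorem~\ref{ThmSub}(d) and the inclusion $\preo(\cT_i)\subset\pre(\cT_i)$ noted in the proof of the preceding corollary.)

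Next I would form $\cS:=\bigcup_{i\in I}\cS_i$. This is again a pretopology, as observed right after Lemma~\ref{LemPTa}, and it visibly still consists of right exact sequences, since each of its members lies in some $\cS_i$. By Theorem~\ref{ThmTPT}(v) we have $\topp(\cS)=\topp(\bigcup_i\cS_i)=\vee_{i\in I}\cT_i$. Hence $\vee_{i\in I}\cT_i$ is of the form $\topp(\cS)$ for a pretopology $\cS$ comprising only right exact sequences, which is precisely condition (b) of Theorem~\ref{ThmSub}. Applying the implication (b)$\Rightarrow$(a) of that theorem gives that $\vee_{i\in I}\cT_i$ is subcanonical, completing the proof.

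There is essentially no obstacle here: the entire content is packaged in the two cited results, and the argument is a verbatim analogue of the noetherian case. The only point requiring the slightest care is the observation that a union of pretopologies whose members are all right exact sequences is again a pretopology all of whose members are right exact sequences — but this is immediate, since right-exactness of a sequence \eqref{EqSeq} is a property of the individual sequence and membership in the union is membership in some $\cS_i$. So I would write the proof in two or three sentences.

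\begin{proof}
For each $i\in I$, by Theorem~\ref{ThmSub} (equivalence of (a) and (b)) there is a pretopology $\cS_i$ consisting only of right exact sequences with $\topp(\cS_i)=\cT_i$; one may take $\cS_i=\preo(\cT_i)$. Then $\cS:=\bigcup_{i\in I}\cS_i$ is a pretopology, and every sequence in $\cS$ is right exact. By Theorem~\ref{ThmTPT}(v) we have $\topp(\cS)=\vee_{i\in I}\cT_i$, so $\vee_{i\in I}\cT_i$ satisfies condition (b) of Theorem~\ref{ThmSub} and is therefore subcanonical.
\end{proof}
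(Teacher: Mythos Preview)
Your proof is correct and follows exactly the same approach as the paper, which simply states that the result follows from Theorem~\ref{ThmSub} via Theorem~\ref{ThmTPT}(v). Your version spells out the details that the paper leaves implicit, but the underlying argument is identical.
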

\begin{proof}
By Theorem~\ref{ThmTPT}(v), this follows from Theorem~\ref{ThmSub}
\end{proof}


\subsection{The canonical topology}

\begin{theorem}\label{ThmCanTop}
\begin{enumerate}[label=(\roman*)]
\item There exists a unique finest subcanonical topology on $\ba$, the {\bf canonical} topology. It is given by $\vee_i\cT_i$, for $\cT_i$ ranging over all subcanonical topologies. 
\item There exists a unique finest subcanonical noetherian topology $\ba$, the {\bf canonical noetherian} topology. It is given by $\vee_i\cT_i$, for $\cT_i$ ranging over all subcanonical noetherian topologies. \end{enumerate}
\end{theorem}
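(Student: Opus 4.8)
The plan is to deduce both parts from the closure results for the join operation $\vee$ established earlier, together with the general fact (\Cref{DefMeet}) that for any family of topologies there is a well-defined coarsest topology $\vee_i\cT_i$ refining all of them. For part (i), I would first observe that the collection of subcanonical topologies on $\ba$ is nonempty: the trivial (discrete) topology, whose only covering sieve on $X$ is $\ba(-,X)$ itself, is subcanonical, since the sheaf condition against it is vacuous. Hence the join $\cT_{\mathrm{can}}:=\vee_i\cT_i$ over all subcanonical $\cT_i$ is defined. By \Cref{CorCan}, this join is again subcanonical, and by construction it refines every subcanonical topology, so it is the unique finest one. This is essentially a one-line argument once \Cref{CorCan} is in hand.

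For part (ii) I would run the identical argument with ``subcanonical'' replaced by ``subcanonical and noetherian''. Again the family is nonempty, since the trivial topology is noetherian as well: $\tZ=\Yon$ in that case, and each $\Yon(X)$ is compact in $\PSh\ba$ (equivalently, apply \Cref{ThmNoe} with the empty/trivial pretopology, which is visibly $\aleph_0$-bounded). The join $\vee_i\cT_i$ over all subcanonical noetherian $\cT_i$ is then subcanonical by \Cref{CorCan} \emph{and} noetherian by \Cref{CorNoe} (with $\kappa=\aleph_0$); since it refines all members of the family, it is the unique finest subcanonical noetherian topology.

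The only point requiring a little care — and the step I would flag as the main (very mild) obstacle — is the nonemptiness of the two families, i.e. checking that the trivial topology is both subcanonical and noetherian, so that the joins are taken over nonempty index sets and the phrase ``finest'' is meaningful. Subcanonicity of the trivial topology is immediate from the definition of a sheaf (only the maximal sieve must be tested, and $F(X)\simeq\Nat(\ba(-,X),F)$ always). Noetherianity follows since for the trivial topology $\Sh(\ba,\cT)=\PSh\ba$ and $\tZ=\Yon$, and representable presheaves are compact in $\PSh\ba$ by \Cref{LemCompact}(iii); alternatively, the trivial topology equals $\topp(\cS)$ for the empty pretopology $\cS=\varnothing$, which is $\aleph_0$-bounded, so \Cref{ThmNoe} applies. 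Everything else is a formal consequence of \Cref{CorCan} and \Cref{CorNoe}.
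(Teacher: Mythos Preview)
Your proof is correct and follows essentially the same approach as the paper: the paper's proof simply observes that the join of subcanonical (resp.\ noetherian) topologies is again subcanonical (resp.\ noetherian), citing Theorem~\ref{ThmSub} (resp.\ Theorem~\ref{ThmNoe}) together with Theorem~\ref{ThmTPT}(v) --- which is exactly the content of the corollaries \ref{CorCan} and \ref{CorNoe} you invoke. Your additional verification that the families are nonempty (via the trivial topology) is a nice point of rigour that the paper leaves implicit.
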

\begin{proof}
This is a direct consequence of Corollories~\ref{CorNoe} and~\ref{CorCan}.

\end{proof}


\begin{example}\label{ExInd}
If $\ba$ is abelian, the canonical noetherian topology $\cT$ is the one for which $\Ind\ba\simeq\Sh(\ba,\cT)$. This is a direct consequence of Theorems~\ref{ThmNoe} and~\ref{ThmSub}, see the following theorem for a complete proof of a generalisation.
\end{example}


\begin{theorem}\label{ThmTensorEnv}
Let $\ba$ be an essentially small full additive subcategory of a $k$-linear Grothendieck category $\bC$ such that $\ba$ is a compact generator and every compact object in $\bC$ is a subobject of one in $\ba$.
Then the following is true:
\begin{enumerate}[label=(\roman*)]
\item The class $\cS$ of all right exact sequences
$Z\to Y\to X$ in $\ba$ (that is sequences such that $X=\coker(Z\to Y)$ )
 constitutes a pretopology on $\ba$.
 \item The topology $\topp(\cS)$ is the canonical noetherian topology on $\ba$.
 \item We have $\bC\simeq\Sh(\ba,\topp(\cS))$ and $\bC$ is equivalent to the category of functors $F:\ba^{\op}\to \Mod$, for which $F(\coker f)\stackrel{\sim}{\to}\ker F(f)$ for every morphism $f$ which has a cokernel.
\end{enumerate}
\end{theorem}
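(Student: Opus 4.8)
The plan is to verify the pretopology axioms for $\cS$ directly, then identify $\topp(\cS)$ with the canonical noetherian topology via the characterizations in Theorems~\ref{ThmNoe} and~\ref{ThmSub}, and finally recover $\bC$ using Proposition~\ref{PropUni} together with the Gabriel--Popescu theorem. For part (i), let $\Theta := \tS : \PSh\ba \to \bC$ be the sheafification for the topology $\topp(\cS')$ where $\cS'$ is \emph{some} provisional $2$-bounded pretopology; more efficiently, since $\ba$ is a compact generator of the Grothendieck category $\bC$, the functor $M \mapsto \bC(-,M)|_{\ba}$ is a creator by Theorem~\ref{ThmGP}, and it is a \emph{compact} creator because every object of $\ba$ is compact in $\bC$. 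So $\bC = \Sh_{\cS(\Theta)}\ba$ where $\Theta : \PSh\ba \to \bC$ is the exact cocontinuous extension of the inclusion $\ba \hookrightarrow \bC$. By Lemma~\ref{LemEpiLoc}(ii), $\cS(\Theta)$ is a pretopology; and by Lemma~\ref{Lem2bound}(i) (applied to the compact creator) the topology $\topp(\cS(\Theta))$ is noetherian, so by Lemma~\ref{LemNoe} the $2$-bounded subclass $\pre_2(\topp\cS(\Theta))$ is already a pretopology generating the same topology. Thus I obtain a $2$-bounded pretopology; the remaining content of (i) is to show it coincides with the class of \emph{all} right exact sequences $Z \to Y \to X$ in $\ba$, i.e. those with $X = \coker(Z \to Y)$ computed in $\ba$.

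The key point for (i) is the hypothesis that every compact object of $\bC$ is a subobject of one in $\ba$. I would argue: a sequence $Z \xrightarrow{p} Y \xrightarrow{q} X$ in $\ba$ is right exact (in the sense of the excerpt, i.e.\ $X = \coker p$ in $\ba$) if and only if $\Theta$ sends $\Yon(Z) \to \Yon(Y) \to \Yon(X) \to 0$ to an exact sequence in $\bC$, i.e.\ iff it lies in $\cS(\Theta)$. One direction: if $\Theta$-exact, then applying $\bC(-,A) = \ba(-,A)$ for $A \in \ba$ and using that $\ba \hookrightarrow \bC$ is fully faithful gives $0 \to \ba(X,A) \to \ba(Y,A) \to \ba(Z,A)$ exact, so $X = \coker p$ in $\ba$. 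Conversely, suppose $X = \coker p$ in $\ba$. Let $C = \coker(\Yon Z \to \Yon Y)$ computed in $\bC$ (via $\Theta$), so there is a canonical epimorphism $C \twoheadrightarrow \tZ(X)$; I must show it is an isomorphism, equivalently (since both are compact, being quotients of compacts in a locally noetherian-style situation — here compact because $Y$ is compact) that $C \hookrightarrow \tZ(X)$, and then that the map is epi, which it already is. To get injectivity: $C$ is a quotient of the compact object $Y \in \ba$, hence compact, hence by hypothesis a subobject of some $A \in \ba$; composing $C \hookrightarrow A$ with... — here one uses the universal property of $\coker$ in $\ba$ applied to the composite $Y \to C \hookrightarrow A$, which kills $p$, hence factors through $X = \coker_{\ba} p$, giving $X \to A$ through which $Y \to C \hookrightarrow A$ factors; chasing this shows the map $Y \to C$ already factors set-theoretically as $Y \to X \to C$, forcing $C = \tZ(X)$. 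This embedding hypothesis is exactly what makes "cokernel in $\ba$" agree with "cokernel in $\bC$", and it is the crux of the whole theorem.

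For part (ii): by construction $\topp(\cS)$ is noetherian (Theorem~\ref{ThmNoe}, since $\cS$ is $2$-bounded) and subcanonical (Theorem~\ref{ThmSub}(b), since $\cS$ consists of right exact sequences). It is the \emph{canonical} noetherian one because any subcanonical noetherian topology $\cT'$ satisfies $\cT' = \topp(\cS')$ for a $2$-bounded pretopology $\cS'$ of right exact sequences by Lemma~\ref{Lem2bound} and Theorem~\ref{ThmSub}; then $\cS' \subset \cS$ since every right exact sequence in $\ba$ lies in $\cS$, whence $\cT' = \topp(\cS') \subset \topp(\cS)$ by Theorem~\ref{ThmTPT}(iv). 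For part (iii): $\topp(\cS) = \topp(\cS(\Theta))$ would give $\Sh(\ba,\topp\cS) = \Sh_{\cS(\Theta)}\ba = \bC$ — so I actually need $\cS$ and $\cS(\Theta)$ to generate the same topology, which follows once (i) identifies $\cS(\Theta)$'s $2$-bounded part with all right exact sequences, combined with $\cS(\Theta) \subset \pre(\topp\cS(\Theta))$ and Corollary~\ref{Cor227}(ii). Finally the functor description: $\bC = \Sh_{\cS}\ba$ by Theorem~\ref{ThmSheaves}, and $\Sh_{\cS}\ba$ is exactly the category of $F : \ba^{\op} \to \Mod$ with $0 \to F(X) \to F(Y) \to F(Z)$ exact for all right exact $Z \to Y \to X$; reindexing by "$f$ with a cokernel", setting $X = \coker f$, $Z$ the source, $Y$ the target of $f$, this is the stated condition $F(\coker f) \xrightarrow{\sim} \ker F(f)$. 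The main obstacle is the argument in part (i) that the hypothesis "every compact object of $\bC$ embeds in an object of $\ba$" upgrades cokernels in $\ba$ to cokernels in $\bC$; everything else is assembling already-established results.
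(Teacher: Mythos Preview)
Your overall architecture matches the paper's: realise $\bC$ as $\Sh(\ba,\cT)$ via Gabriel--Popescu, observe that $\cT$ is subcanonical and noetherian so that $\pre_2\cT$ is a pretopology of right exact sequences generating $\cT$, and then show $\pre_2\cT$ comprises \emph{all} right exact sequences in $\ba$; parts (ii) and (iii) follow as you indicate. The gap is in the key step. You assert at the outset that the canonical map $C := \coker_{\bC}(p) \to X$ is an epimorphism (``which it already is''), and then spend the embedding hypothesis solely on showing it is a monomorphism. But the epimorphism claim amounts to $q\colon Y \to X$ being epi in $\bC$, and this is \emph{not} automatic from $q$ being epi in $\ba$ --- it too requires the hypothesis. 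A fix in your own style: the quotient $X/C$ in $\bC$ is compact, hence embeds in some $B \in \ba$; the composite $X \to X/C \hookrightarrow B$ lies in $\ba(X,B)$ and is killed by $q$, hence vanishes since $q$ is epi in $\ba$, forcing $X/C = 0$. So both halves of the isomorphism consume the hypothesis, not just one.

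The paper sidesteps this asymmetry by arguing dually: it checks directly that $0 \to \bC(X,V) \to \bC(Y,V) \to \bC(Z,V)$ is exact for every compact $V \in \bC$. The hypothesis produces a copresentation $0 \to V \to A \to B$ with $A,B \in \ba$ (embed $V$ in some $A$, then embed the compact quotient $A/V$ in some $B$), so $\bC(-,V)|_{\ba} = \ker\bigl(\ba(-,A) \to \ba(-,B)\bigr)$, and left-exactness of kernels transports the exactness of the two representable sequences to the $V$-sequence. One then passes to arbitrary objects by writing them as filtered colimits of compacts and using that $X,Y,Z \in \ba$ are themselves compact. This handles surjectivity of $q$ in $\bC$ and exactness in the middle simultaneously, rather than as two separate embedding arguments.
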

\begin{proof}
By Theorems~\ref{ThmGP} and~\ref{ThmBQ}, $\bC\simeq\Sh(\ba,\cT)$ for some Grothendieck topology $\cT$. By Theorem~\ref{ThmSub}, $\pre(\cT)$ consists of formal right exact sequences. Hence $\pre_2\cT\subset\pre\cT$ consists of right exact sequences in $\ba$ and by Lemma~\ref{Lem2bound}(i), $\pre_2\cT$ is a pretopology and $\cT=\topp(\pre_2\cT)$.

Part (i) thus follows if $\pre_2\cT$ comprises {\em all} right exact sequences in $\ba$. Consider therefore an exact sequence $Z\to Y\to X\to 0$ in $\ba$. We claim it is also exact in $\bC$. Since $\bC$ is locally finitely presentable, every object is a filtered colimit of compact objects. Since filtered colimits of short exact sequences in $\Mod$ are exact, to prove the claim it thus suffices to prove that 
$$0\to \bC(X,V)\to\bC(Y,V)\to\bC(Z,V)$$
is exact, for every compact object $V\in \bC$. By taking a copresentation of $V$ by objects in $\ba$ (which exists by assumption and the fact that a quotient of two compact objects is again compact), it follows that the above sequence is exact, which proves the claim.

For part (ii), we need to show that $\topp(\cS)$ contains every noetherian subcanonical topology. Clearly, $\cS$ contains every pretopology consisting of $2$-bounded right exact sequences. Since $\topp$ is inclusion preserving, the claim follows from Lemma~\ref{Lem2bound}(i) and Theorem~\ref{ThmSub}.

Part (iii) follows from the first paragraph and Theorem~\ref{ThmSheaves}.
\end{proof}

\subsection{Some results on non-subcanonical topologies}

\begin{prop}\label{PropKer}
For a pretopology $\cS$, set $\cT=\topp(\cS)$. For $X,X'\in\ba$ and $\tZ:\ba\to\Sh(\ba,\cT)$, the kernel of $\ba(X,X')\to \Sh(\ba,\cT)(\tZ X,\tZ X')$ is given by
$$J(X,X')\;=\; \bigcup_{r:\amalg_\alpha V_\alpha\to X\,\in\, \widetilde{\Co}(\cS)}\ker\left(\ba(X,X')\to\prod_\alpha\ba(V_\alpha,X')\right).$$
In particular, $\tZ$ is faithful if and only if every $q:\amalg_\beta Y_\beta\to X$ in $\Co(\cS)$ is epimorphic.
\end{prop}
\begin{proof}
We use the adjoint pair $(\tS,\tI)$ so that the composite
$$\ba(X,X')\;\to\; \Sh(\ba,\cT)(\tZ X,\tZ X')\;\xrightarrow{\sim}\; (\tI\tS \ba(-,X'))(X)$$
is precisely the double evaluation of the unit $\id\to\tI \tS$ at $\Yon(X')$ and $X$. 
By \ref{sheafif}(iii) and (iv), $\sigma_{\Sigma F} :\Sigma F\to \Sigma\Sigma F$ is a monomorphism for every presheaf $F$. The kernel $J(X,X')$ is therefore the kernel of
$$\ba(X,X')\;\to\; (\Sigma \ba(-,X'))(X)=\varinjlim_{R\in\cT(X)}\Nat(R,\Yon(X')).$$
 By definition, every $R\in\cT(X)$ contains some $R_r\in\cT(X)$ with $r:\amalg V_\alpha\to X$ in $\widetilde{\Co}(\cS)$. Moreover, by the exact sequence~\eqref{NatPres} applied to $F=\Yon(X')$, the kernel of $\ba(X,X')\to \Nat(R_r,\Yon(X'))$ is the kernel of $\ba(X,X')\to\prod_\alpha \ba(V_\alpha, X')$. This leads to the description of the kernel.
\end{proof}

\begin{remark}
For the specific case $\cS=\pre(\cT)$, Proposition~\ref{PropKer} is~\cite[Lemma~3.6]{Lo}.
\end{remark}

\begin{lemma}\label{lemlim}
Consider a pretopology $\cS$ for which every $q\in \Co(\cS)$ is an epimorphism. Assume we have $X,X'\in \ba$ such that $\Co_X(\cS)=\widetilde{\Co}_X(\cS)$. If for every $\amalg_\gamma Z_\gamma\to \amalg_\beta Y_\beta\to X$ in $\cS$, the sequence
$$0\to \ba(X,X')\to\prod_\beta \ba(Y_\beta, X')\to\prod_\gamma\ba(Z_\gamma,X') $$
is exact, then $\tZ$ induces an isomorphism $\ba(X,X')\to \Sh_{\cS}(\tZ(X),\tZ(X')).$
\end{lemma}
\begin{proof}
As in the proof of Proposition~\ref{PropKer}, we have to prove that sheafification induces an isomorphism
$$\Yon(X')(X)\;\to\; (\tI\tS \Yon(X'))(X).$$
Moreover, by Lemma~\ref{LemSep}, the presheaf $\Yon(X')$ is separated. 

We continue by considering an arbitrary separated presheaf $F$, for which we have $\Sigma F\simeq \tI\tS F$ by \ref{sheafif}(ii) and (v). By the definition of $\topp(\cS)$ we also find that we can restrict the limit in \eqref{SigmaDL} to the sieves in $\topp(\cS)(X)$ of the form $R_q, q\in \Co_X(\cS)$:
$$\Sigma F(X)=\varinjlim \Nat(R_q,F).$$
Now, by exact sequence~\eqref{NatPres} and Lemma~\ref{LemChiant}(i), we have an exact sequence
$$0\to \Nat(R_q, F)\to \prod_\beta F(Y_\beta)\to \prod_\gamma F(Z_\gamma),$$
which shows that, for $F=\Yon(X')$, the morphism $F(X)\to\Nat(R_q,F)$ is actually an isomorphism, which completes the proof.
\end{proof}

\subsection{Monoidal topologies}\label{MonTop}

By a `monoidal category' we will always understand a $k$-linear and monoidal category for which the tensor product is $k$-linear in each variable.

\subsubsection{Biclosed categories}A monoidal category $(\bB,\otimes,\unit)$ is {\bf biclosed} if for every $X\in\bB$ the endofunctors $X\otimes-$ and $-\otimes X$ of $\bB$ have right adjoints $[X,-]_l$ and $[X,-]_r$. We will abbreviate `biclosed monoidal' to `biclosed'. If $\bB$ is a Grothendieck category, then $(\bB,\otimes,\unit)$ is biclosed if and only if the tensor product is cocontinuous in each variable. 

\subsubsection{Day convolution}\label{DayConv} Let $(\ba,\otimes,\unit)$ be an essentially small monoidal category. Then $\PSh\ba$ is a biclosed Grothendieck category for Day convolution, which defines the tensor product via left Kan extensions, see e.g.~\cite{IK, Sch2}. Concretely, the tensor product of $F,G\in\PSh\ba$ is
$$F\ast G\;=\; \int^{X,Y\in\ba}F(X)\otimes_k G(Y)\otimes_k \ba(-,X\otimes Y)$$
and for a third $H\in\PSh\ba$ we have
$$[F,H]_l\;=\;\int_{X\in\ba}\Hom_k(F(X),H(X\otimes -)),\;\mbox{ or }\; [F,H]_l(Z)=\Nat(F,H(-\otimes Z)),$$
and
$$[G,H]_r\;=\;\int_{Y\in\ba}\Hom_k(G(Y),H(-\otimes Y)),\;\mbox{ or }\; [F,H]_r(Z)=\Nat(G,H(Z\otimes -)).$$
The Yoneda embedding $\Yon:\ba\to\PSh\ba$ is canonically monoidal and, by \cite[Theorem~5.1]{IK}, composition with $\Yon$ yields an equivalence
\begin{equation}\label{UniIK}
[\PSh\ba,\bB]^{\otimes}_{cc}\;\stackrel{\sim}{\to}\; [\ba,\bB]^{\otimes},
\end{equation}
for any $k$-linear cocomplete monoidal category with cocontinuous tensor product $\bB$. Here and below, for any  category $C$ of functors between monoidal categories, we denote by $C^\otimes$ the category of monoidal functors (and monoidal natural transformations) for which the underlying functors lie in $C$.

\subsubsection{}Now consider a biclosed Grothendieck category $\bC$ with an essentially small full monoidal subcategory $\bc\subset\bC$ which is also a generator. Clearly, we can always construct such a $\bc$.
By \eqref{UniIK}, the sheafification $\tS:\PSh\bc\to \bC$ is monoidal. More generally, for $\ba$ monoidal, we call a localisation $\bC$ of $\PSh\ba$,  for which $\bC$ and $\tS:\PSh\ba\to\bC$ (or equivalently $\tZ:\ba\to\bC$) have monoidal structures, a {\bf monoidal localisation}.
Theorem~\ref{ThmGP} and the above show that all biclosed Grothendieck categories are such monoidal localisations of presheaf categories. 

This motivates the notion of `monoidal topologies' introduced below. 

\subsubsection{}\label{A*} For a formal sequence $\sigma$ as in \eqref{EqSeq} and $A\in\ba$, we denote by $A\otimes \sigma$ the obvious formal sequence
$$\amalg_\gamma(A\otimes Z_\gamma)\;\to\; \amalg_\beta (A\otimes Y_\beta)\;\to\; A\otimes X,$$
and similarly for $\sigma\otimes A$. For a sieve $R\subset\ba(-,X)$ and $A\in \ba$, we denote by $A\otimes R$ the sieve on $A\otimes X$ for which $(A\otimes R)(V)$ is generated by all composites $V\to A\otimes Y\xrightarrow{A\otimes f} A\otimes X$
with $f\in R(Y)$ and $Y\in\ba$. We define $R\otimes A$ similarly. We can observe that $A\otimes R$ is the image in $\PSh\ba$ of the canonical morphism
$$\Yon(A)\ast  R\;\to\; \Yon(A)\ast \Yon(X)\simeq \Yon(A\otimes X),$$
which, by cocontinuity of $\ast$, is indeed the image of $\oplus_\beta \Yon(A\otimes Y_\beta)\to \Yon(A\otimes X)$ for a choice of generators $Y_\beta\to X$ of $R$.

\begin{theorem}\label{ThmBiclosed}
Let $(\ba,\otimes,\unit)$ be an essentially small monoidal category. The following conditions are equivalent on a Grothendieck topology $\cT$ on $\ba$.
\begin{enumerate}[label=(\alph*)]
\item The class $\pre(\cT)$ is closed under the operations $A\otimes-$ and $-\otimes A$, for every $A\in\ba$.
\item We have $\cT=\topp(\cS)$ for some pretopology $\cS$ closed under $A\otimes-$ and $-\otimes A$, for every $A\in\ba$.
\item For every $X\in\ba$ and $R\in\cT(X)$, we have $R\otimes A\in \cT(X\otimes A)$ and $A\otimes R\in \cT(A\otimes X)$.
\item For every $F\in \Sh(\ba,\cT)$ and $A\in\ba$, the presheaves $F(A\otimes-)$ and $F(-\otimes A)$ are $\cT$-sheaves.
\item There exists a (automatically essentially unique) biclosed structure on $\Sh(\ba,\cT)$ for which $\tZ$ admits a monoidal structure.
\end{enumerate}
A topology which satisfies one of these conditions is {\bf monoidal}.
\end{theorem}
\begin{proof} We start by proving the cycle $a\Rightarrow b \Rightarrow d\Rightarrow e\Rightarrow a$.
That (a) implies (b) follows from Theorem~\ref{ThmTPT}(iii). That (b) implies (d) is a consequence of Theorem~\ref{ThmSheaves}.

That (d) implies (e) is a consequence of Day's reflection theorem. Concretely, by \cite[Theorem~1.2(ii)]{Day} applied to the generating subcategory $\ba\subset\PSh\ba$ and the localisation $\Sh(\ba,\cT)\subset\PSh\ba$, the condition that $[\Yon(A),F]_l\simeq F(A\otimes-)$ and $[\Yon(A),F]_r\simeq F(-\otimes A)$ be $\cT$-sheaves, for every $\cT$-sheaf $F$ and $A\in \ba$, implies there exists a biclosed structure on $\Sh(\ba,\cT)$ for which $\tS:\PSh\ba\to\Sh(\ba,\cT)$ is monoidal. Such a monoidal structure must be essentially unique. By equivalence~\eqref{UniIK}, uniqueness of this monoidal structure is still imposed by demanding that $\tZ$ has a monoidal structure. In principle, \cite[Theorem~1.2(ii)]{Day} is only concerned with {\em non-enriched} closed {\em symmetric} monoidal categories. However, as also pointed out in \cite[\S 0]{Day}, the methods extend trivially to enriched and biclosed monoidal categories.

That (e) implies (a) follows from the assumption that the tensor product on $\Sh(\ba,\cT)$ is cocontinuous in each variable. 


To prove that (e) implies (c), we use the characterisation the of sieves in a topology from Theorem~\ref{ThmBQ}.
By~\ref{A*}, we can decompose $\Yon(A)\ast i$, with $i$ the inclusion $R\subset \Yon(X)$, as follows:
\begin{equation}\label{compA}\Yon(A)\ast i\;:\; \Yon(A)\ast R \tto A\otimes R \hookrightarrow \Yon(A\otimes X) \stackrel{\sim}{\to} \Yon(A)\ast\Yon(X).\end{equation}
Under assumption (e), $\tS$ is monoidal by \eqref{UniIK}. That $\tS(\Yon(A)\ast i)$ is an isomorphism thus follows since $\tS(i)$ is an isomorphism. Since $\tS$ sends epimorphisms to epimorphisms, it follows that $\tS$ sends $A\otimes R \hookrightarrow \Yon(A\otimes X)$ to an isomorphism, so $A\otimes R\in \cT(A\otimes X)$. The same observation for $R\otimes A$ shows that (c) follows.

Finally, we prove that (c) implies (d). Let $F$ be a $\cT$-sheaf. We will only prove that $F(A\otimes X)\to \Nat(R,F(A\otimes-))$ is an isomorphism for all $X,A\in\ba$ and $R\in \cT(X)$, as the case $-\otimes A$ is done identically.
By adjunction, it suffices to prove that $F(A\otimes X)\to \Nat(\Yon(A)\ast R,F)$ is an isomorphism. The composite
$$F(A\otimes X)\to \Nat(\Yon(A)\ast R,F)\hookrightarrow \Nat(A\otimes R ,F),$$
where the monomorphism is induced from the epimorphism in \eqref{compA}, is an isomorphism by assumption (c). Hence the left morphism in the composite is an isomorphism too.
This concludes the proof.
\end{proof}

\begin{theorem}\label{ThmUniMon}
Consider an essentially small monoidal category $(\ba,\otimes,\unit)$ with monoidal topology $\cT$ and a pretopology (not necessarily closed under tensor product) $\cS$ with $\cT=\topp(\cS)$. Consider also a subclass $\cS_0\subset \cS$ such that each $\gamma\in\cS$ is of the form $A\otimes\gamma'\otimes B$ for some $A,B\in\ba$ and $\gamma'\in\cS_0$.
 Let $\bB$ be a $k$-linear cocomplete monoidal category with cocontinuous tensor product.
Composition with the monoidal functor $\tZ:\ba\to\Sh(\ba,\cT)$ yields an equivalence
$$[\Sh(\ba,\cT),\bB]_{cc}^{\otimes}\;\stackrel{\sim}{\to}\; [\ba,\bB]_{\cS_0}^{\otimes}$$ 
\end{theorem}
\begin{proof}
As the tensor product in $\bB$ is cocontinuous, it follows that $[\ba,\bB]_{\cS_0}^{\otimes}=[\ba,\bB]_{\cS}^{\otimes}$. The statement thus becomes the monoidal version of Proposition~\ref{PropUni} and is a standard consequence of the latter, see e.g. \cite[\S 3]{Sch2}. We sketch an argument below.

As $\tZ$ is monoidal, the functor $[\Sh(\ba,\cT),\bB]_{cc}^{\otimes}\to [\ba,\bB]_{\cS}^{\otimes}$ is well-defined and also faithfulness is inherited from \ref{PropUni}. If for a natural transformation $\eta: F\to G$ for $F,G\in [\Sh(\ba,\cT),\bB]_{cc}^{\otimes}$, we have that $\eta_{\tZ}:F\circ \tZ\to G\circ \tZ$ is monoidal, $\eta$ is itself monoidal, by cocontinuity of $F,G,-\otimes-$ and an application of Lemma~\ref{LemRefl}(ii). Consequently, $[\Sh(\ba,\cT),\bB]_{cc}^{\otimes}\to [\ba,\bB]_{\cS}^{\otimes}$ is full.

Finally, that any monoidal structure on $F\circ \tZ$ for $F\in [\Sh(\ba,\cT),\bB]_{cc}$ extends to one on $F$, follows from \eqref{UniIK}. An alternative approach uses the results of Section~\ref{SecKelly} as follows.  A monoidal structure on $F\circ \tZ$ yields a natural isomorphism between composite functors
$$
\xymatrix{
\ba\otimes\ba\ar[r]\ar[dr]& \ba\ar[dr]\\
& \bB\otimes \bB\ar[r]&\bB,
}
$$
where the horizontal arrows represent the tensor product and diagonal arrows correspond to $F\circ \tZ$. By Proposition~\ref{PropUni} and Theorem~\ref{ThmLRS},
this gives a natural isomorphism between corresponding cocontinuous functors $\bA\boxtimes\bA\rightrightarrows \bB$, for $\bA=\Sh(\ba,\cT)$. The universal property of $\bA\boxtimes\bA$ thus yields a binatural isomorphism of bifunctors $\bA\times\bA\rightrightarrows \bB$. One can then verify that this isomorphism represents the sought after monoidal structure on $F$.
\end{proof}

When a category is braided monoidal, the conditions of being left or right closed are clearly equivalent, so we simply speak of `closed categories'. The following theorems are immediate analogues of the above results.
\begin{theorem}\label{ThmSym}
Let $(\ba,\otimes,\unit,c)$ be an essentially small braided monoidal $k$-linear category. A Grothendieck topology $\cT$ on $\ba$ is monoidal if and only if one of the following is satisfied.
\begin{enumerate}[label=(\alph*)]
\item The class $\pre(\cT)$ is closed under the operation $A\otimes-$, for every $A\in\ba$.
\item We have $\cT=\topp(\cS)$ for some pretopology $\cS$ closed under $A\otimes-$, for every $A\in\ba$.
\item For every $X\in\ba$ and $R\in\cT(X)$, we have $R\otimes A\in \cT(X\otimes A)$.
\item For every $F\in \Sh(\ba,\cT)$ and $A\in\ba$, the presheaf $F(A\otimes-)$ is a $\cT$-sheaf.
\item There exists a closed structure on $\Sh(\ba,\cT)$ for which $\tZ$ admits a braided monoidal structure.
\end{enumerate}

\end{theorem}

\begin{theorem}
Let $(\ba,\otimes,\unit,c)$ be an essentially small braided monoidal category and $\bB$ a $k$-linear cocomplete braided monoidal category with cocontinuous tensor product.
Consider a class $\cS_0$ of formal sequences \eqref{EqSeq} in $\ba$ such that the class $\cS$ of sequences of the form $A\otimes\gamma$ with $A\in\ba$ and $\gamma\in\cS_0$ is a pretopology. Set $\cT=\topp\cS$. Composition with the braided monoidal functor $\tZ:\ba\to\Sh(\ba,\cT)$ yields an equivalence
$$[\Sh(\ba,\cT),\bB]_{cc}^{\otimes,b}\;\stackrel{\sim}{\to}\; [\ba,\bB]_{\cS_0}^{\otimes,b}$$ 
\end{theorem}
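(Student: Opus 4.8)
The plan is to reduce this braided monoidal statement to Theorem~\ref{ThmUniMon} by checking that its hypotheses are met in the braided setting. First I would verify that the class $\cS$ of sequences $A\otimes\gamma$ with $\gamma\in\cS_0$, which is assumed to be a pretopology, yields a \emph{monoidal} topology $\cT=\topp\cS$: since $\cS$ is by construction closed under $A\otimes-$ for every $A\in\ba$, condition (b) of Theorem~\ref{ThmSym} applies (using $\cT=\pre(\topp\cS)$ via Theorem~\ref{ThmTPT}(iii), or directly that $\cS\subset\pre(\cT)$ by Corollary~\ref{CorTriv}), so $\cT$ is monoidal and, by the braided refinement, $\Sh(\ba,\cT)$ carries a closed structure for which $\tZ$ is \emph{braided} monoidal. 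One subtlety: Theorem~\ref{ThmSym}(b) is stated with $\cT=\pre(\cS)$, whereas here $\cT=\topp(\cS)$; I would note these agree in the sense needed, since closure of $\pre(\cT)$ under $A\otimes-$ is what matters and $\cS\subset\pre(\cT)$ is closed under $A\otimes-$, which already suffices by condition (b). Then I would observe that, because the tensor product in $\bB$ is cocontinuous (hence right exact), any sequence of the form $A\otimes\gamma$ is sent to a cokernel diagram by a functor $h:\ba\to\bB$ as soon as $\gamma$ is, so $[\ba,\bB]^{\otimes,b}_{\cS_0}=[\ba,\bB]^{\otimes,b}_{\cS}$ as subcategories of $[\ba,\bB]^{\otimes,b}$.

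Next I would apply Theorem~\ref{ThmUniMon} with its $\cS_0$ taken to be our $\cS$ itself (every $\gamma\in\cS$ is trivially $\unit\otimes\gamma\otimes\unit$), giving the equivalence
$$[\Sh(\ba,\cT),\bB]^{\otimes}_{cc}\;\stackrel{\sim}{\to}\;[\ba,\bB]^{\otimes}_{\cS}$$
for the underlying (non-braided) monoidal structures. To upgrade this to the braided statement, I would argue that under this equivalence a cocontinuous monoidal functor $H:\Sh(\ba,\cT)\to\bB$ is braided if and only if $H\circ\tZ$ is braided. The ``only if'' direction is immediate since $\tZ$ is braided monoidal. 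For ``if'', the hexagon/compatibility condition expressing that $H$ respects the braiding is a natural transformation between two cocontinuous functors $\Sh(\ba,\cT)\otimes\Sh(\ba,\cT)\to\bB$ (built from $H$, the braidings, and the monoidal constraints, all of which are cocontinuous in each variable because $\otimes$ is cocontinuous on both sides and $H$ is cocontinuous); Lemma~\ref{LemRefl}(ii), applied to the creator $\tZ\boxtimes\tZ:\ba\otimes\ba\to\Sh(\ba,\cT)\otimes\Sh(\ba,\cT)$ (or an iteration of Lemma~\ref{LemRefl}(i) in one variable at a time), shows this natural transformation is an isomorphism as soon as it is one after restriction along $\tZ$ in each slot, i.e.\ as soon as $H\circ\tZ$ is braided. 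The same Lemma~\ref{LemRefl}(i) argument, one variable at a time, shows that a braided monoidal structure on $H\circ\tZ$ extends uniquely to one on $H$, exactly as in the proof of Theorem~\ref{ThmUniMon}.

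Assembling these pieces: the functor $-\circ\tZ:[\Sh(\ba,\cT),\bB]^{\otimes,b}_{cc}\to[\ba,\bB]^{\otimes,b}_{\cS_0}$ is well-defined (composition of braided monoidal functors, and landing in $[\ba,\bB]_{\cS}=[\ba,\bB]_{\cS_0}$ by Corollary~\ref{CorTriv} plus cocontinuity of $\otimes$ in $\bB$), it is faithful and full by inheritance from Theorem~\ref{ThmUniMon} together with the Lemma~\ref{LemRefl}(ii) observation about braided natural transformations, and it is essentially surjective because any $h\in[\ba,\bB]^{\otimes,b}_{\cS_0}$ extends by $\Lan_{\tZ}$ to a cocontinuous monoidal functor whose monoidal structure is braided by the extension argument above.

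The main obstacle I anticipate is the bookkeeping needed to make the Lemma~\ref{LemRefl}(i) argument work in \emph{two} variables simultaneously: one must present $\Sh(\ba,\cT)\otimes\Sh(\ba,\cT)$ (or just each $F\in\Sh(\ba,\cT)$) as a suitable colimit of objects in the image of $\tZ$ and check that the hexagon diagrams, which involve the monoidal constraints $H_2(X\otimes Y)\cong H(X)\otimes H(Y)$ and the braidings on both $\Sh(\ba,\cT)$ and $\bB$, are genuinely natural and cocontinuous in each of the two arguments so that the colimit comparison applies. This is exactly the kind of verification that is described as ``a standard consequence'' in the proof of Theorem~\ref{ThmUniMon}, so I would be content to cite that proof and \cite[\S 3]{Sch2}, indicating only that the braiding axioms are encoded by a natural transformation of cocontinuous bifunctors and hence are detected on the generating subcategory $\ba$.
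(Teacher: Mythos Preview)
Your approach is correct and matches the paper's intended argument: the paper states this theorem as an ``immediate analogue'' of Theorem~\ref{ThmUniMon} without giving a separate proof, and your reduction to Theorem~\ref{ThmUniMon} followed by a braided upgrade is precisely that analogue spelled out. The observations that $\cT$ is monoidal via Theorem~\ref{ThmSym}(b) (noting the evident typo $\pre(\cS)$ for $\topp(\cS)$ there), that $[\ba,\bB]^{\otimes,b}_{\cS_0}=[\ba,\bB]^{\otimes,b}_{\cS}$ by cocontinuity of $\otimes$ in $\bB$, and that the braiding compatibility of $H$ is detected on the image of $\tZ$ via Lemma~\ref{LemRefl}(i) in each variable, are all sound.

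One minor imprecision: the braiding axiom for $H$ asks that two natural transformations between cocontinuous bifunctors $\Sh(\ba,\cT)\times\Sh(\ba,\cT)\to\bB$ be \emph{equal}, not that a single natural transformation be an isomorphism, so Lemma~\ref{LemRefl}(ii) as stated does not apply directly. Your parenthetical alternative via Lemma~\ref{LemRefl}(i), writing each object as a colimit over the image of $\tZ$ and arguing one variable at a time, is the correct mechanism and suffices; this is also how the analogous step in the proof of Theorem~\ref{ThmUniMon} (extending the monoidal structure) works.
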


\subsection{Kelly product}\label{SecKelly}

We show that the Kelly tensor product, as defined below, of two Grothendieck categories as cocomplete categories is again a Grothendieck category. This recovers the notion of `tensor products of Grothendieck categories' from \cite{LRS}. 
\subsubsection{} 

Kelly proved, see \cite[\S 6.5]{Ke} for a more general result, that there is a canonical notion of a tensor product on the class of $\kappa$-cocomplete $k$-linear categories (which always exists). We will employ the term `Kelly tensor product' for the same notion applied to the class of cocomplete $k$-linear categories (which contrary to {\it loc. cit.} we do not prove exists in general).

Consider cocomplete $k$-linear categories $\bA,\bB$ and $\bC$ and a bifunctor $\bA\times\bB\to\bC$, cocontinuous and $k$-linear in each variable. This data is the Kelly tensor product of $\bA$ and $\bB$ if, for every cocomplete $k$-linear category $\bD$, it yields an equivalence between $[\bC,\bD]_{cc}$ and the category of functors $\bA\times\bB\to\bD$ which are $k$-linear and cocontinuous in each variable. We identify the latter category of functors with $[\bA,[\bB,\bD]_{cc}]_{cc}$. Here, and below, we freely use that colimits in functor categories are computed pointwise in the target category.

For two $k$-linear categories $\ba,\bb$, we denote their ordinary tensor product over $k$ by $\ba\otimes\bb=\ba\otimes_k\bb$. Its objects are pairs $(X,Y)$ with $X\in\ba$ and $Y\in\bb$ and 
$$\ba\otimes\bb((X,Y),(X',Y'))\;:=\;\ba(X,X')\otimes_k\bb(Y,Y').$$ For any $k$-linear category $\bd$ we have an equivalence
\begin{equation}\label{EqObv}[\ba\otimes\bb,\bd]\;\simeq\;[\ba,[\bb,\bd]].\end{equation}

The following theorem mainly recovers results from \cite{LRS}. 
\begin{theorem}\label{ThmLRS}
For $k$-linear Grothendieck categories $\bA$ and $\bB$, choose realisations as $\bA\simeq\Sh_{\cS_1}\ba$ and $\bB\simeq\Sh_{\cS_2}\bb$. Denote by $\bA\boxtimes\bB=\bA\boxtimes_k\bB$ the subcategory of $\PSh(\ba\otimes\bb)$ of functors
$F$ for which the sequences
\begin{eqnarray*}
&&0\to F(X,A)\to \prod_\beta F(Y_\beta, A)\to\prod_\gamma F(Z_\gamma,A),\\
&&0\to F(X,A)\to \prod_\beta F(X,B_\beta)\to\prod_\gamma F(X,C_\gamma)
\end{eqnarray*}
are exact, for all $(X,A)\in\ba\otimes\bb$, all $\amalg_\gamma Z_\gamma\to\amalg_\beta Y_\beta\to X$ in $\cS_1$ and $\amalg_\gamma C_\gamma\to\amalg_\beta B_\beta\to A$ in $\cS_2$. 
\begin{enumerate}[label=(\roman*)]
\item $\bA\boxtimes\bB$ is a localisation of $\PSh(\ba\otimes\bb)$, so in particular it
is a Grothendieck category.  
\item $\bA\boxtimes\bB$ is the Kelly product of $\bA$ and $\bB$, so in particular independent of our choices of $(\ba,\cS_1)$ and $(\bb,\cS_2)$.
\item  If $\ba\to\bA$ and $\bb\to\bB$ are $\kappa$-compact creators for an infinite regular cardinality $\kappa$, then the same is true for $\ba\otimes\bb\to\bA\boxtimes\bB$.
\end{enumerate}
\end{theorem}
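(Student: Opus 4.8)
The plan is to build everything on top of the machinery already developed for a single class of sequences and then feed a suitable combined class into Theorem~\ref{ThmSheaves} and Proposition~\ref{PropUni}. First I would observe that $\bA\boxtimes\bB$ is exactly $\Sh_{\cS}(\ba\otimes\bb)$ for the class $\cS$ consisting of the sequences $(\amalg_\gamma Z_\gamma\to\amalg_\beta Y_\beta\to X)\otimes A$ for $(\amalg_\gamma Z_\gamma\to\amalg_\beta Y_\beta\to X)\in\cS_1$ and $A\in\bb$, together with the sequences $X\otimes(\amalg_\gamma C_\gamma\to\amalg_\beta B_\beta\to A)$ for $(\amalg_\gamma C_\gamma\to\amalg_\beta B_\beta\to A)\in\cS_2$ and $X\in\ba$; here I use that $\PSh(\ba\otimes\bb)\simeq[\ba^{\op},\PSh\bb]$ via \eqref{EqObv}, so that the exactness of the two displayed sequences is precisely the sheaf condition for these formal sequences in $\ba\otimes\bb$. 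For part (i), if each of $\cS_1,\cS_2$ is taken to be a \emph{pretopology} — which we may assume, since by Theorem~\ref{ThmTPT} any Giraud subcategory is $\Sh(\ba,\topp\cS)$ for a pretopology, and $\Sh_{\cS}\ba=\Sh(\ba,\topp\cS)$ by Theorem~\ref{ThmSheaves} — then I claim $\cS$ is again a pretopology on $\ba\otimes\bb$. This is the verification of (PTa) and (PTb): given $f:(A',B')\to(X,A)$, one factors $f$ through its two `coordinate' pieces and applies (PTa)/(PTb) for $\cS_1$ in the first variable and for $\cS_2$ in the second, noting that tensoring a diagram witnessing (PTa) for $\cS_1$ by a fixed object of $\bb$ again witnesses (PTa) for the tensored sequence; one must be a little careful because a morphism in $\ba\otimes\bb$ is a $k$-linear combination of decomposables, so an iterative argument in the style of the proof of Lemma~\ref{LemPTa}(i) is needed. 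Once $\cS$ is known to be a pretopology, Theorem~\ref{ThmSheaves} gives that $\bA\boxtimes\bB=\Sh_{\cS}(\ba\otimes\bb)$ is a localisation of $\PSh(\ba\otimes\bb)$ and hence a Grothendieck category, settling (i).

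For part (ii) I would use the universal property of Proposition~\ref{PropUni}. By that proposition, precomposition with $\tZ:\ba\otimes\bb\to\bA\boxtimes\bB$ gives an equivalence $[\bA\boxtimes\bB,\bD]_{cc}\xrightarrow{\sim}[\ba\otimes\bb,\bD]_{\cS}$ for every cocomplete $k$-linear $\bD$. The task is then to identify $[\ba\otimes\bb,\bD]_{\cS}$ with the category of bifunctors $\bA\times\bB\to\bD$ cocontinuous in each variable, equivalently with $[\bA,[\bB,\bD]_{cc}]_{cc}$. Using \eqref{EqObv}, a $k$-linear functor $\ba\otimes\bb\to\bD$ is the same as a $k$-linear functor $\ba\to[\bb,\bD]$; a short computation (colimits in $[\bb,\bD]$ are pointwise, so right-exactness of the $\cS_1$-sequences after composing is detected pointwise) shows that the condition of killing the $\cS_1$-part of $\cS$ says exactly that this functor lands in $[\bb,\bD]_{\cS_2\text{-agnostic}}$... more precisely, it factors through $[\ba,[\bb,\bD]]$ and sends $\cS_1$-sequences to cokernel diagrams, while the $\cS_2$-part of $\cS$ says the $\bb$-valued functor at each object of $\ba$ sends $\cS_2$-sequences to cokernels. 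Applying Proposition~\ref{PropUni} once in the $\bb$-variable (with target $\bD$) and once in the $\ba$-variable (with target $[\bB,\bD]_{cc}$) then gives $[\ba\otimes\bb,\bD]_{\cS}\simeq[\ba,[\bB,\bD]_{cc}]_{\cS_1}\simeq[\bA,[\bB,\bD]_{cc}]_{cc}$, which is the desired universal property. Independence of the choices of presentations is then automatic from the universal property.

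For part (iii), assume $\ba\to\bA$ and $\bb\to\bB$ are $\kappa$-compact creators; by Lemma~\ref{Lem2bound}-type reasoning (or directly by Theorem~\ref{ThmNoe} and Lemma~\ref{LemNoe}) we may choose $\cS_1$ and $\cS_2$ to be $\kappa$-bounded pretopologies. Then every sequence in the combined class $\cS$ is $\kappa$-bounded, since tensoring by a single object does not change the cardinality of the index sets. Hence $\cS$ is a $\kappa$-bounded pretopology, and Lemma~\ref{LemReflComp}(iii) (or Lemma~\ref{LemPTa} together with the $\kappa=\aleph_0$ discussion) shows that $\tZ(X,A)$ is $\kappa$-compact in $\Sh_{\cS}(\ba\otimes\bb)=\bA\boxtimes\bB$ for every $(X,A)$; combined with part (ii), which identifies $\bA\boxtimes\bB$ as the Kelly product and $\tZ$ as the canonical functor, this says $\ba\otimes\bb\to\bA\boxtimes\bB$ is a $\kappa$-compact creator.

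The main obstacle I expect is the verification in part (i) that the combined class $\cS$ is a pretopology — specifically checking (PTb) for a mixed sequence when the test morphism $f:(A',B')\to(\amalg_\beta Y_\beta,A)$ in $\ba\otimes\bb$ is a genuine $k$-linear combination of decomposable morphisms rather than a single decomposable, which forces the same kind of inductive bookkeeping as in Lemma~\ref{LemPTa}(i) and Section~\ref{SecLinearise}. The identification of the functor categories in (ii) is conceptually routine once one is comfortable moving colimits through $[\bb,\bD]$, but needs to be stated carefully so that the two applications of Proposition~\ref{PropUni} compose correctly.
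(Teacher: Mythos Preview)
Your approach to parts (ii) and (iii) is essentially the paper's: the chain
\[
[\bA\boxtimes\bB,\bD]_{cc}\simeq[\ba\otimes\bb,\bD]_{\cS}\simeq[\ba,[\bb,\bD]_{\cS_2}]_{\cS_1}\simeq[\ba,[\bB,\bD]_{cc}]_{\cS_1}\simeq[\bA,[\bB,\bD]_{cc}]_{cc}
\]
via two applications of Proposition~\ref{PropUni}, and the $\kappa$-compactness from Lemma~\ref{LemReflComp}(iii) applied to a $\kappa$-bounded $\cS$, are exactly what the paper does.

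The gap is in part (i). The obstacle you flag --- verifying (PTb) for the combined class $\cS$ when the test morphism is a genuine $k$-linear combination of decomposables --- is real, and the inductive bookkeeping you propose does not go through over an arbitrary commutative ring $k$. The paper's Remark immediately following the theorem makes this explicit: (PTb) for $\cS$ is only established when $k$ is a field. Concretely, given $f=\sum_i g_i\otimes h_i:(A',B')\to(\amalg_\beta Y_\beta,A)$ with $(q\otimes A)\circ f=0$, the relation $\sum_i (q\circ g_i)\otimes h_i=0$ in $\ba(A',X)\otimes_k\bb(B',A)$ does not let you arrange that each $q\circ g_i$ vanishes (or factor through a common refinement) unless you can rewrite the sum against a $k$-basis of $\bb(B',A)$ --- which is exactly what fails over a general ring.

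The paper avoids this entirely. Instead of proving $\cS=\cS_l\cup\cS_r$ is a pretopology, it shows directly that each of $\Sh_{\cS_l}(\ba\otimes\bb)$ and $\Sh_{\cS_r}(\ba\otimes\bb)$ is a Giraud subcategory: under $\PSh(\ba\otimes\bb)\simeq[\ba^{\op},\PSh\bb]$ the subcategory $\Sh_{\cS_r}(\ba\otimes\bb)$ identifies with $[\ba^{\op},\bB]$, and postcomposition with the sheafification $\tS:\PSh\bb\to\bB$ supplies an exact left adjoint (exactness being checked pointwise). Then Corollary~\ref{IntGiraud} --- the intersection of Giraud subcategories is Giraud --- yields that $\bA\boxtimes\bB=\Sh_{\cS_l}(\ba\otimes\bb)\cap\Sh_{\cS_r}(\ba\otimes\bb)$ is a localisation. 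Since Proposition~\ref{PropUni} and Lemma~\ref{LemReflComp} require only reflectivity (respectively $\kappa$-boundedness) of $\cS$, not that $\cS$ be a pretopology, your arguments for (ii) and (iii) survive unchanged once (i) is established this way.
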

\begin{proof}
Denote by $\cS_l$ the class of all formal sequences $\amalg_\gamma (Z_\gamma,A)\to\amalg_\beta (Y_\beta,A)\to (X,A)$ in $\ba\otimes \bb$ induced from $\cS_1$; and by $\cS_r$ the class of all sequences $\amalg_\gamma (X,C_\gamma)\to\amalg_\beta (X,B_\beta)\to (X,A)$ induced from $\cS_2$. We also set $\cS=\cS_l\cup\cS_r$.
The equivalence, coming from~\eqref{EqObv},
$$\PSh(\ba\otimes\bb)\;\simeq\; [\ba^{\op},\PSh\bb]$$
allows us to identify $\Sh_{\cS_r}(\ba\otimes\bb)$ with $[\ba^{\op},\bB]$. The inclusion $[\ba^{\op},\bB]\hookrightarrow [\ba^{\op},\PSh\bb]$ has left adjoint given by $F\mapsto  \tS\circ F$ for $F\in[\ba^{\op},\PSh\bb]$, with $\tS$ the sheafification for $\bB$.
Since we can compute limits pointwise it follows that $[\ba^{\op},\bB]$ is a localisation of $[\ba^{\op},\PSh\bb]$. Similarly, $\Sh_{\cS_l}(\ba\otimes\bb)$ is a localisation of $\PSh(\ba\otimes\bb)$. It thus follows from Corollary~\ref{IntGiraud} that 
$$\bA\boxtimes\bB\;=\; \Sh_{\cS}(\ba\otimes\bb)\;=\; \Sh_{\cS_l}(\ba\otimes\bb)\cap\Sh_{\cS_r}(\ba\otimes\bb)$$
is a localisation of $\PSh(\ba\otimes\bb)$, proving part (i).

Let $\bD$ be any $k$-linear cocomplete category. By Proposition~\ref{PropUni} and \eqref{EqObv}, we have
$$[\bA\boxtimes\bB,\bD]_{cc}\;\simeq\; [\ba\otimes\bb,\bD]_{\cS}\;\simeq\;[\ba,[\bb,\bD]_{\cS_2}]_{\cS_1}\;\simeq\;[\ba,[\bB,\bD]_{cc}]_{\cS_1}\;\simeq\;[\bA,[\bB,\bD]_{cc}]_{cc},$$
which proves part (ii).
Part (iii) follows from Theorem~\ref{ThmNoe} and Lemma~\ref{LemReflComp}(iii).
\end{proof}

\begin{remark}
Using Lemma~\ref{LemPTa}(i) one can show that the class $\cS$ in the proof of Theorem~\ref{ThmLRS} satisfies (PTa). If $k$ is a field one can similarly show that (PTb) is satisfied and hence $\cS$ is a pretopology on $\ba\otimes\bb$. For arbitrary commutative ring~$k$, Remark~\ref{RemTPT}(i) shows that $\cT:=\topp(\cS)$ is a topology. In \cite{LRS} it is actually proved that $\bA\boxtimes\bB=\Sh(\ba\otimes\bb,\cT)$. This is thus only clear from our approach when $k$ is a field.
\end{remark}

\begin{example}
As observed in the proof, we have $\PSh\ba\boxtimes\bB\simeq[\ba^{\op},\bB]$.
\end{example}


\section{Presentations of tensor categories}\label{SecTensor}

For the entire section, $k$ is a field. We denote $\Mod$ by $\Vecc$.

\subsection{Definitions and aims}

Part of the motivation for the previous sections comes from applications to the theory of tensor categories (in the sense of \cite{Del90, EGNO}).

\subsubsection{} A monoidal category $(\ba,\otimes,\unit)$ (following the convention in Subsection \ref{MonTop}) is {\bf rigid} if every object $X\in\ba$ has a left dual $X^\ast$ and a right dual ${}^\ast X$, see \cite[\S 2.10]{EGNO}. We say that $\ba$ is `a monoidal category over $k$' if $k\to\ba(\unit,\unit)$ is an isomorphism. For the rest of the paper, {\em we assume that $\ba$ is an additive rigid monoidal category over $k$}.
 An essentially small rigid monoidal category $\bT$ over $k$ is a {\bf tensor category over $k$} if $\bT$ is abelian. Henceforth, when we say `tensor category' it is understood to mean `tensor category over some field extension $K$ of $k$' (possibly $K=k$) and these are considered as $k$-linear categories.

We will use freely that $(-)^\ast$ has an action on morphisms via an isomorphism
$$\ba(X,Y)\,\xrightarrow{\sim}\,\ba(Y^\ast, X^\ast),\quad f\mapsto f^\ast.$$
This thus yields an equivalence $\ba\to\ba^{\op}$ and in particular sends short exact sequences to short exact sequences if $\ba$ is abelian, see~\cite[Proposition~4.2.9]{EGNO}.
 We refer to \cite{Ideals} for the notion of tensor ideals.

\subsubsection{} The ind-completion of a tensor category is a biclosed Grothendieck category, as follows from \cite[\S 7]{Del90} or Theorem~\ref{ThmBiclosed} and Example~\ref{ExInd}. 
 In this section, we investigate the possibilities to present this ind-completion via a monoidal Grothendieck topology on a rigid monoidal category. 
 With slight abuse of notation we will refer to a monoidal functor $\ba\to\bT$ to a tensor category $\bT$ as a {\bf rigid monoidal creator} when the composite $\ba\to\Ind\bT$ is a creator (so when $\Ind\bT\to\PSh\ba$ is a localisation).

We specify the explicit characterisation of creators in \cite[Theorem~1.2]{Lo} to this particular setting. A simplification occurs because, by compactness of the objects in $\bT$ we can restrict to {\em finite} `epimorphic collections', and subsequently by additivity of $\ba$ to single epimorphisms.
\begin{lemma}\label{LemLow}
A functor $u:\ba\to\bT$ to a tensor category $\bT$ is a creator if and only if the following three conditions are satisfied:
\begin{enumerate}[label=(\roman*)] 
\item[(G)] Every object in $\bT$ is a quotient of some $u(X)$, with $X\in\ba$.
\item[(F)] For every morphism $a:u(X)\to u(Y)$ there exists a morphism $q:X'\to X$ such that $u(q)$ is an epimorphism and $a\circ u( q)$ is in the image of $u$.
\item[(FF)] If $u(f)=0$ for  $f\in\ba(X,Y)$, there exists a morphism $q:X'\to X$ such that $u(q)$ is an epimorphism and $f\circ q=0$.
\end{enumerate}
\end{lemma}

\begin{example}
The conditions in Lemma~\ref{LemLow} can be satisfied when $\bT$ is a tensor category over some non-trivial field extension $K/k=\ba(\unit,\unit)$. An example is given for $\ba$ the category `$\bC$' in \cite[\S 2.3.3]{AbEnv}, with char$(k)=0$, and $\bT$ the abelian envelope of $[\mathrm{GL}_0,K]$ from \cite[Theorem~4.2.1]{AbEnv} or \cite{EHS}.
\end{example}
 
\begin{prop}
Assume $\bT$ is a tensor category with full additive rigid monoidal subcategory $\ba$ such that every object in $\bT$ is a quotient of an object in $\ba$.
 Then $\Ind\bT$ is the localisation of $\PSh\ba$ with respect to the canonical noetherian topology on $\ba$, and $\Ind\bT$ is monoidally equivalent to the category of functors $\ba^{\op}\to\Vecc$ which send every cokernel in $\ba$ to the corresponding kernel in $\Vecc$.
\end{prop}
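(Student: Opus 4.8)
The plan is to obtain both assertions from Theorem~\ref{ThmTensorEnv} applied to $\bC=\Ind\bT$. By \cite[\S 7]{Del90} (or Theorem~\ref{ThmBiclosed}) the category $\Ind\bT$ is a $k$-linear biclosed Grothendieck category, so the real work is to verify the three hypotheses of Theorem~\ref{ThmTensorEnv} for the inclusion $\ba\subseteq\Ind\bT$: that $\ba$ is an essentially small full additive subcategory, that it is a compact generator, and that every compact object of $\Ind\bT$ is a subobject of one in $\ba$. Essential smallness is immediate, since $\bT$ is essentially small and $\ba\subseteq\bT$ is full and additive.

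For the compact generator condition I would first recall that the objects of $\bT$, hence those of $\ba$, are compact ($\aleph_0$-presentable) in $\Ind\bT$, and that the compact objects of $\Ind\bT$ are exactly the retracts of objects of $\bT$. That $\ba$ is a generator then follows directly from the hypothesis: every object of $\Ind\bT$ is a filtered colimit of objects of $\bT$, hence admits an epimorphism from a coproduct of such, and each object of $\bT$ is in turn a quotient of an object of $\ba$; composing shows every object of $\Ind\bT$ is a quotient of a coproduct of objects of $\ba$, which is to say $\prod_{A\in\ba}\Ind\bT(A,-)$ is faithful, as in~\ref{DefGen}.

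The heart of the argument — and the only step using rigidity — is the subobject condition. Given a compact object $C\in\Ind\bT$, it is a retract, hence a subobject, of some $V\in\bT$, so it suffices to embed $V$ into an object of $\ba$. By hypothesis the object ${}^{\ast}V\in\bT$ is a quotient of some $A\in\ba$, and applying the contravariant duality functor $(-)^{\ast}$, which is exact since $\bT$ is abelian rigid \cite[Proposition~4.2.9]{EGNO}, turns the epimorphism $A\tto{}^{\ast}V$ into a monomorphism $({}^{\ast}V)^{\ast}\hookrightarrow A^{\ast}$. Since $({}^{\ast}V)^{\ast}\cong V$ and $A^{\ast}\in\ba$ (a full rigid monoidal subcategory is closed under duals, which coincide with those of $\bT$), we obtain $C\hookrightarrow V\hookrightarrow A^{\ast}$ with $A^{\ast}\in\ba$, as required.

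With the hypotheses in place, Theorem~\ref{ThmTensorEnv} yields that the right exact sequences $Z\to Y\to X$ in $\ba$ form a pretopology $\cS$, that $\cT:=\topp(\cS)$ is the canonical noetherian topology, and that $\Ind\bT\simeq\Sh(\ba,\cT)$ is equivalent to the category of functors $\ba^{\op}\to\Vecc$ sending each cokernel in $\ba$ to the corresponding kernel in $\Vecc$. It then remains to make this equivalence monoidal. Since $\Ind\bT$ is biclosed Grothendieck and $\ba\subseteq\Ind\bT$ is a full monoidal subcategory that is a generator, the discussion in~\ref{MonTop} (via Theorems~\ref{ThmGP} and~\ref{ThmBQ} and equivalence~\eqref{UniIK}) shows that $\tS\colon\PSh\ba\to\Ind\bT$ carries a monoidal structure; equivalently $\cT$ is a monoidal topology and $\tZ$ is monoidal. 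As $\ba\hookrightarrow\Ind\bT$ and $\tZ$ are both monoidal and the equivalence $\Sh(\ba,\cT)\simeq\Ind\bT$ intertwines them, the universal property of Day convolution (equivalence~\eqref{UniIK}, or Theorem~\ref{ThmUniMon}) promotes it to a monoidal equivalence, and hence so is the induced equivalence with the functor category. I expect the subobject condition to be the main obstacle: it is precisely the point where rigidity must be invoked, dualizing an epimorphism onto ${}^{\ast}V$ to produce a monomorphism out of $V$, whereas the generator condition is essentially a restatement of the hypothesis.
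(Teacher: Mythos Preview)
Your proposal is correct and follows essentially the same route as the paper: verify the hypotheses of Theorem~\ref{ThmTensorEnv} for $\bC=\Ind\bT$ (compactness is clear, the generator condition is a restatement of the hypothesis, and the subobject condition is obtained by dualising an epimorphism onto ${}^{\ast}V$), then invoke Theorem~\ref{ThmUniMon} for the monoidal upgrade. The paper's proof is terser but structurally identical; your expanded treatment of the duality step and of the monoidal equivalence is accurate.
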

\begin{proof}
Clearly the subcategory $\ba\subset\bT\subset\Ind\bT$ consists of compact objects. Moreover, by duality $(-)^\ast$, every compact object in $\Ind\bT$ (meaning every object in $\bT$) is a subobject of an object in $\ba$. We can then apply Theorem~\ref{ThmTensorEnv}. That the equivalence between $\Ind\bT$ and the category of sheaves on $\ba$ is monoidal follows for instance from Theorem~\ref{ThmUniMon}.
\end{proof}

We also record the following basic observations for future use.

\begin{lemma}\label{LemInj0}
Consider a rigid monoidal creator $h:\ba\to\bT$. 
\begin{enumerate}[label=(\roman*)]
\item For every morphism $f:X\to Y$ in $\ba$, there exists $g:Y\to Z$ such that $g\circ f=0$ and 
$$h(X)\xrightarrow{h(f)}h(Y)\xrightarrow{h(g)}h(Z)$$
is acyclic in $\bT$.
\item If $I$ is an injective object in $\Ind\bT$, then there exists a functor $F:\bj\to\ba$, with $\bj$ filtered, such that $\colim (h\circ F)\simeq I$.
\end{enumerate}
\end{lemma}
\begin{proof}
We start from a morphism $a:B\to A$ in $\ba$ (which will be ${}^\ast f$). Inside $\PSh\ba$, the collection of all morphisms $b:C\to B$ for which $a\circ b=0$ is jointly epimorphic onto the kernel of $a$ in $\PSh\ba$. Applying the exact and cocontinuous sheafification $\PSh\ba\to\Ind\bT$ thus yields an acyclic sequence
$$\bigoplus_{b:C\to B,a\circ b=0}h(C)\,\to\,h(B)\,\to\,h(A).$$
Since the kernel of $h(a)$ is in $\bT$ and thus compact, additivity of $\ba$ implies that we can pick one morphism $b:C\to B$ for which the sequence remains acyclic, see Lemma~\ref{LemCompact}(i).
Using the duality $(-)^\ast$ on $\ba$ then yields the sequence in (i).

For part (ii), by Lemma~\ref{LemRefl}(i), it suffices to show that $\bj_I$ is filtered. This is an immediate application of part (i) since $I$ is injective.
\end{proof}

\subsection{Morphisms to the tensor unit} 

\subsubsection{} We denote by $\cU=\cU(\ba)$ the class of all non-zero morphisms $U\to\unit$ in $\ba$, and by $\cU^0$ the class of all morphisms $U\to\unit$. We consider two potential properties of such $u:U\to \unit$:
\begin{enumerate}[label=(\roman*)] 
\item[(Ep)] The morphism $u:U\to\unit$ is an epimorphism in $\ba$.
\item[(Ex)] The following diagram is a coequaliser in $\ba$:
 $$\xymatrix{U\otimes U\ar@<0.5ex>[r]^-{u\otimes U}\ar@<-0.5ex>[r]_-{U\otimes u}&U\ar[r]^-{u}&\unit}.$$
\end{enumerate}
We will often regard (Ex) via the equivalent formulation that the sequence 
$$\sigma_u:\quad U\otimes U\xrightarrow{u\otimes U-U\otimes u}U\xrightarrow{u}\unit\to 0$$
is exact. We have inclusions $\cU^{ex}\subset\cU^{ep}\subset \cU\subset \cU^0$, for the subclasses of morphisms which satisfy (Ex) and (Ep).

\begin{theorem}\label{ThmNew}
\begin{enumerate}[label=(\roman*)] 
\item For $u\in\cU$, we have $u\in \cU^{ex}$ if and only if $u$ is a strict epimorphism.
\item If $\bT$ is a tensor category, $\cU(\bT)=\cU^{ex}(\bT)$. 
\end{enumerate}
\end{theorem}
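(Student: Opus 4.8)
\textbf{Proof plan for Theorem~\ref{ThmNew}.}

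The plan is to analyse the two parts separately, since (i) is a general statement about the class $\cU$ in any additive rigid monoidal category, while (ii) uses that $\bT$ is abelian.

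For part (i): recall that a \emph{strict epimorphism} $u:U\to\unit$ is one that is the coequaliser of its kernel pair. I would first observe that, since $\ba$ is rigid and $\unit$ is the monoidal unit, the kernel pair of $u:U\to\unit$ (the pullback of $u$ against itself) is computed by $U\otimes U$ equipped with the two maps $u\otimes U$ and $U\otimes u$ --- this uses that $-\otimes U$ is exact (it has both adjoints, being rigid), so it preserves the pullback $\unit\times_{\unit}\unit=\unit$, giving $U\otimes U=U\times_{\unit}U$, and one checks the two projections are exactly $u\otimes U$ and $U\otimes u$ by rigidity/duality. Once the kernel pair is identified with $(U\otimes U\rightrightarrows U)$, the condition (Ex), namely that $U\otimes U\rightrightarrows U\xrightarrow{u}\unit$ is a coequaliser, is literally the assertion that $u$ is the coequaliser of its kernel pair, i.e. that $u$ is a strict epimorphism. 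The one subtlety: the displayed coequaliser in (Ex) uses the single difference map $u\otimes U - U\otimes u$ rather than a parallel pair, but in an additive category a fork $A\xrightarrow{f} B\xrightarrow{g} C$ with $g$ the cokernel of $f$ is the same data as the coequaliser of the parallel pair $(f,0)$, and the coequaliser of $(u\otimes U,\,U\otimes u)$ is the coequaliser of $(u\otimes U - U\otimes u,\,0)$; so the two formulations agree. That gives the equivalence in (i).

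For part (ii): let $u\in\cU(\bT)$, so $u:U\to\unit$ is a nonzero morphism in the \emph{abelian} category $\bT$. I need to show (Ex) holds, equivalently (by part (i)) that $u$ is a strict epimorphism. In an abelian category every morphism factors as epi followed by mono, and a strict epimorphism is the same as an epimorphism; so it suffices to show $u$ is an epimorphism, i.e. $\mathrm{im}(u)=\unit$. The image of $u$ is a subobject of $\unit$. Now I would invoke the standard fact about tensor categories that $\unit$ is a simple object --- more precisely, since $\End_{\bT}(\unit)=K$ is a field and $\bT$ is a tensor category, $\unit$ has no proper nonzero subobjects (see \cite[\S 4.3]{EGNO}); this is where $\bT$ being a genuine tensor category, not merely additive rigid, is used. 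Since $u\neq 0$, its image is a nonzero subobject of $\unit$, hence equals $\unit$, so $u$ is epi, hence strict epi, hence satisfies (Ex). Therefore $\cU(\bT)\subset\cU^{ex}(\bT)$, and the reverse inclusion is automatic, giving equality.

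The main obstacle I anticipate is pinning down part (i) cleanly: one must be careful that the kernel pair of $u$ really is $(u\otimes U, U\otimes u)$ on $U\otimes U$ and not some other object, and that rigidity (exactness of $-\otimes U$, which here only needs that the functor has a right adjoint, namely $-\otimes U^\ast$, and a left adjoint $-\otimes{}^\ast U$, so it preserves both limits and colimits) is exactly what makes the pullback $\unit\times_\unit\unit$ be $\unit$ and hence $U\times_\unit U = (U\otimes U)$. The translation between the single-difference-map formulation of (Ex) and the parallel-pair formulation of "coequaliser of the kernel pair" is routine additive-category bookkeeping. Part (ii) is then short given the simplicity of $\unit$ in a tensor category.
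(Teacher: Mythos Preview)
Your argument for part~(ii), and for the easy direction of~(i) (that $u\in\cU^{ex}$ is a cokernel, hence a strict epimorphism), are correct and essentially the paper's.

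The hard direction of~(i) has a genuine gap. You claim that $(U\otimes U,\,u\otimes U,\,U\otimes u)$ is the kernel pair of $u$, arguing that the limit-preserving functor $-\otimes U$ takes the trivial pullback $\unit\times_\unit\unit=\unit$ to $U\times_\unit U$. But applying $-\otimes U$ to that square only gives $U\times_U U=U$; nothing here produces $U\times_\unit U$. In fact the claim is false already in $\Vecc$: for $U=k^2$ and any nonzero $u:U\to k$, the kernel pair $U\times_k U$ is $3$-dimensional while $U\otimes U$ is $4$-dimensional. So $(u\otimes U,U\otimes u)$ is strictly coarser than the kernel pair, and knowing that a strict epimorphism coequalises its kernel pair does not immediately tell you it coequalises this coarser pair.

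The paper bypasses kernel pairs entirely. Assuming $u$ is a strict epimorphism, take $f:U\to V$ with $f\circ(u\otimes U)=f\circ(U\otimes u)$; by bifunctoriality this reads $u\otimes f=f\otimes u$ as maps $U\otimes U\to V$. To factor $f$ through $u$ it suffices, by the additive characterisation of strict epimorphisms, to show $f\circ g=0$ for every $g:X\to U$ with $u\circ g=0$. One computes
\[
(f\circ g)\circ(u\otimes X)\;=\;u\otimes(f\circ g)\;=\;(u\otimes f)\circ(U\otimes g)\;=\;(f\otimes u)\circ(U\otimes g)\;=\;f\otimes(u\circ g)\;=\;0,
\]
and since $u$ is epi and $-\otimes X$ is a left adjoint, $u\otimes X$ is epi, forcing $f\circ g=0$. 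Rigidity thus enters not to identify a kernel pair, but to guarantee that $u\otimes X$ remains an epimorphism.
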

\begin{proof}
For part (i), if $u\in\cU^{ex}$ then it is a cokernel, so in particular a strict epimorphism. Now assume that $u:U\to \unit$ is a strict epimorphism and consider $f:U\to V$ in $\ba$ which equalises $U\otimes U\rightrightarrows U$, in other words, $$u\otimes f = f\otimes u.$$
We need to show that $f$ composes to zero with every morphism $g:X\to U$ in $\ba$ with $u\circ g=0$, since the latter then implies that $f$ factors (uniquely) via $u$. By the displayed equation, $u\circ g=0$ implies 
$$0=f\otimes (u\circ g)=u\otimes (f\circ g)=f\circ g\circ (u\otimes X).$$
By adjunction $u\otimes X$ is an epimorphism, so indeed $f\circ g=0$.

Part (ii) follows from part (i). Indeed, since $\unit$ is known to be simple in any tensor category (\cite[Proposition~1.17]{DM}), $\cU=\cU^{ep}$. Since every epimorphism in an abelian category is strict, we have $\cU^{ep}=\cU^{ex}$.
\end{proof}

\begin{remark}\label{RemOX}
For any quasi-coherent sheaf $\cM$ on a scheme $\mX$ with epimorphism $\cM\tto\cO_{\mX}$, the sequence 
$$\cM\otimes_{\cO_{\mX}}\cM\to\cM\to\cO_{\mX}\to 0$$
is exact in $\QCoh\mX$. Indeed, this is clearly the case for affine schemes, since for $\mX=\Spec R$ every epimorphism $M\tto R$ splits. The general claim then follows by taking the stalk at each $x\in\mX$.

\end{remark}

\subsubsection{} For a subclass $\cV\subset\cU^{0}$, we denote by $\bar{\cV}\subset \cU^0$ the closure of $\cV$ under tensor products. Concretely, the elements of $\bar{\cV}$ are the morphisms of the form
$$u_1\otimes\cdots\otimes u_n\;:\; U_1\otimes\cdots \otimes U_n\,\to\,\unit,$$
with $u_i\in\cV$. We then have the sieve $\ann_\cV=\ann_{\bar{\cV}}$ on $\unit$, defined by
$$\ann_{\cV}(X)\;=\;\{u\in \ba(X,\unit)\,|\, u^\ast\circ v=0 \mbox{ for some $v\in\bar{\cV}$}\}$$

We have $\ann_{\cV}=0$ if and only if $\cV\subset\cU^{ep}$. Note also that $\cV_1\subset\cV_2$ implies $\ann_{\cV_1}\subset\ann_{\cV_2}$. 

An {\bf ideal} $J$ in $\ba$ is a subfunctor of $\ba(-,-):\ba^{\op}\times\ba\to \Mod$ and, for $X\in\ba$, we use the notation $J(-,X)$ and $J(X,-)$ correspondingly for the functors on $\ba^{\op}$ and $\ba$. To a functor we can associate an ideal, its {\bf kernel}, comprising of all morphisms sent to zero. For a left tensor ideal $J$, adjunction restricts to an isomorphism
\begin{equation}\label{adjIde}
J(X,Y)\;\xrightarrow{\sim}\; J(Y^\ast\otimes X,\unit).
\end{equation}
In particular, it is well-known, see for instance \cite[Theorem~3.1.1]{Ideals}, that $J\mapsto J(-,\unit)$ yields an inclusion preserving bijection between left tensor ideals in $\ba$ and sieves on $\unit$, where the inverse map sends a sieve $R$ to the ideal $(X,Y)\mapsto R(Y^\ast\otimes X)$. 

We denote by $J_{\cV}$ the unique left tensor ideal in $\ba$ with $J_{\cV}(-,\unit)=\ann_{\cV}$.

\begin{lemma}\label{LemEpiU}
Consider a monoidal creator $h:\ba\to\bC$ with $\bC$ a biclosed Grothendieck category in which $\unit$ is finitely generated.
Let $f: Y\to X$ be a morphism in $\ba$ for which $h(f)$ is an epimorphism. Then there exist $u: U\to\unit$ and $g:U\otimes X\to Y$ in $\ba$ such that $h(u)$ is an epimorphism and $u\otimes X=f\circ g$.
\end{lemma}
\begin{proof}
We know that $h(f\otimes X^\ast)$ is also an epimorphism. We consider $\co_X:\unit\to X\otimes X^\ast$. By Lemma~\ref{LemEpiLoc} and the fact that $\unit$ is finitely generated, there exists $u:U\to\unit$ in $\ba$ such that $h(u)$ is an epimorphism and there exists a commutative diagram
$$\xymatrix{
Y\otimes X^\ast\ar[rr]^{f\otimes X^\ast}&& X\otimes X^\ast\\
U\ar[u]\ar[rr]^{u}&&\unit \ar[u]^{\co_X}.
}$$
The claim now follows by applying the adjunction $-\otimes X\dashv -\otimes X^\ast$.
\end{proof}

\subsection{A set of Grothendieck topologies}

\begin{lemma}\label{PropsTV} Consider a subclass $\cV\subset\cU^{0}$.
\begin{enumerate}[label=(\roman*)] 
\item The class of sequences 
$$\{\sigma_u\otimes X\;:\;\,U\otimes U\otimes X\;\to\; U\otimes X\;\to X\,|\, u\in\cV\},$$
is a pretopology on $\ba$, which we denote by $\cS_{\cV}$. We have $\widetilde{\Co}_{\unit}(\cS_{\cV})=\bar{\cV}$.
 \item The topology $\cT_{\cV}:=\topp(\cS_{\cV})=\cT_{\bar{\cV}}$ is noetherian. The topology is subcanonical if and only if $\cV\subset\cU^{ex}$, and monoidal if $\ba$ is braided.
 \item The functor $\tZ:\ba\to\Sh(\ba,\cT_{\cV})$ is faithful if and only if $\cV\subset\cU^{ep}$ and fully faithful if and only if $\cV\subset\cU^{ex}$. If $\cT_{\cV}$ is monoidal, the kernel of $\tZ$ is $J_{\cV}$.
 \item Every object in $\Sh(\ba,\cT_{\cV})$ is a quotient of a direct sum of objects in the image of $\tZ$. For every morphism $a: \tZ(X)\to \tZ(Y)$ there exists a morphism $q:X'\to X$ in $\ba$ such that $\tZ(q)$ is an epimorphism and $a\circ \tZ(q)$ is in the image of~$\tZ$.
 \end{enumerate}
\end{lemma}
\begin{proof}
For part (i), Condition (PTa) follows from the fact that for any $u\in\cV$ and $f:A\to X$ in $\ba$, there is a commutative diagram
$$\xymatrix{
U\otimes X\ar[rr]^{u\otimes X}&& X\\
U\otimes A\ar@{-->}[rr]^{u\otimes A}\ar@{-->}[u]^{U\otimes f}&& A\ar[u]^f.
}$$
Condition (PTb) follows since for every $u:U\to\unit $ in $\cV$ and $f:A\to U\otimes X$ with $(u\otimes X)\circ f=0$, there is a commutative diagram
$$\xymatrix{
U\otimes U\otimes X\ar[rrr]^-{u\otimes U\otimes X-U\otimes u\otimes X}&&& U\otimes X\\
U\otimes A\ar@{-->}[rrr]^{u\otimes A}\ar@{-->}[u]^{U\otimes f}&&& A\ar[u]^f.
}$$

For part (ii), that $\cT_{\cV}$ is noetherian follows from Lemma~\ref{Lem2bound}(i). That $\cT_{\cV}$ is subcanonical if and only if $\cV\subset\cU^{ex}$ follows from Theorem~\ref{ThmSub}. That $\cT_{\cV}$ is monoidal when $\ba$ is braided follows from Theorem~\ref{ThmSym}.

The first sentence in part (iii) follows from Proposition~\ref{PropKer} and Theorem~\ref{ThmSub}. Now assume that $\cT_{\cV}$ is monoidal and let $J$ denote the tensor ideal in $\ba$ which is the kernel of $\tZ$. By Proposition~\ref{PropKer}, for each $W\in \ba$ we have
$$J(\unit,W)\;=\;\{w:\unit\to W\,|\, w\circ v=0\mbox{ for some $v\in\bar{\cV}$}\}.$$
By~\eqref{adjIde}, this implies $J(U,\unit)=\ann_{\cV}(U)= J_{\cV}(U,\unit)$, so consequently $J=J_{\cV}$.

Part (iv) is just an instance of the general theory of localisations, see \cite[Theorem~1.2]{Lo}.
\end{proof}

\begin{example}\label{ExAE1}
In \cite{AbEnv}, an object $X\in\ba$ is called `strongly faithful' if $\ev_X:X^\ast\otimes X\to\unit$ satisfies (Ex). Note that $\ev_X$ satisfies (Ep) if and only if $X\otimes -$ is faithful.
For $\cV$ the class of $\ev_X$ in $\cU^{ex}$, the topology $\cT_{\cV}$ is studied in \cite[\S 3]{AbEnv}.
\end{example}

\begin{prop}\label{PropTQC}
Consider a noetherian monoidal topology $\cT$ on $\ba$ such that for every epimorphism $N\tto\unit$ in $\Sh(\ba,\cT)$ from a rigid object $N$, the sequence $N\otimes N\to N\to\unit\to0$ is exact. Then $\cT=\cT_{\cV}$, for $\cV$ the class of morphisms $u:U\to\unit$ in $\ba$ for which $\tZ(u)$ is an epimorphism in $\Sh(\ba,\cT)$.
\end{prop}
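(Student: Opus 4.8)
The plan is to show both inclusions $\cT\subseteq\cT_{\cV}$ and $\cT_{\cV}\subseteq\cT$ by using the characterisations of noetherian topologies in terms of $2$-bounded pretopologies together with the monoidality hypothesis. First I would note that by Lemma~\ref{Lem2bound}(i), since $\cT$ is noetherian we have $\cT=\topp(\pre_2\cT)$ and $\pre_2\cT$ consists of sequences $Z\to Y\to X$ in $\ba$ which become right exact after applying $\tZ:\ba\to\Sh(\ba,\cT)$. The strategy is to compare this pretopology with $\cS_{\cV}$ via Corollary~\ref{Cor227}(i), which reduces each inclusion $\topp(\cS_i)\subseteq\topp(\cS_j)$ to a factorisation condition on morphisms in $\Co(\cS_i)$ through elements of $\widetilde{\Co}(\cS_j)$.

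For $\cT_{\cV}\subseteq\cT$: by Lemma~\ref{PropsTV}(i) the pretopology $\cS_{\cV}$ has $\Co(\cS_{\cV})$ consisting of the morphisms $u\otimes X:U\otimes X\to X$ for $u\in\cV$, and $\widetilde{\Co}_{\unit}(\cS_{\cV})=\bar{\cV}$. By definition of $\cV$, for $u\in\cV$ the morphism $\tZ(u)$ is an epimorphism in $\Sh(\ba,\cT)$, hence so is $\tZ(u)\otimes\tZ(X)=\tZ(u\otimes X)$ since the tensor product on $\Sh(\ba,\cT)$ (which exists by monoidality of $\cT$, Theorem~\ref{ThmBiclosed}) is cocontinuous, hence preserves epimorphisms. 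By Lemma~\ref{LemEpiLoc}(i) applied to $\Theta=\tS$, this means $u\otimes X\in\widetilde{\Co}(\pre\cT)$, and one then cuts down to $\pre_2\cT$ using Lemma~\ref{LemCompact}(ii) / Lemma~\ref{LemNoe}. By Corollary~\ref{Cor227}(i) (with $\cS_2=\cS_{\cV}$, $\cS_1=\pre_2\cT$, noting the required commutative triangle is trivially realised by $u\otimes X$ itself) we get $\topp(\cS_{\cV})\subseteq\topp(\pre_2\cT)=\cT$.

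For $\cT\subseteq\cT_{\cV}$: I must show that for each right exact sequence $Z\xrightarrow{p} Y\xrightarrow{q} X$ in $\pre_2\cT$, the morphism $q$ factors, in the sense of Corollary~\ref{Cor227}(i), through some $r\in\widetilde{\Co}_X(\cS_{\cV})$. Applying $\tZ$, we get an epimorphism $\tZ(q):\tZ(Y)\tto\tZ(X)$ in $\Sh(\ba,\cT)$. Tensoring with $\tZ(X)^\ast$ (available because $\ba$ is rigid, so $\tZ(X)$ is a rigid object of $\Sh(\ba,\cT)$) and precomposing with the coevaluation $\unit\to\tZ(X)\otimes\tZ(X)^\ast$, one produces an epimorphism onto $\unit$ from a rigid object, namely from $\tZ(Y\otimes X^\ast)$; here I would invoke Lemma~\ref{LemEpiU} (with $\bC=\Sh(\ba,\cT)$, noting $\unit$ is finitely generated there since $\tZ(\unit)$ is compact by noetherianity) to pull this back to a morphism $u:U\to\unit$ in $\ba$ with $\tZ(u)$ epimorphic — i.e. $u\in\cV$ — together with a morphism $g:U\otimes X\to Y$ in $\ba$ with $u\otimes X=q\circ g$. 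This is exactly the factorisation of $q$ through $u\otimes X\in\Co_X(\cS_{\cV})$ required by Corollary~\ref{Cor227}(i), whence $\cT=\topp(\pre_2\cT)\subseteq\topp(\cS_{\cV})=\cT_{\cV}$.

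The hypothesis that $N\otimes N\to N\to\unit\to 0$ is exact for every epimorphism $N\tto\unit$ from a rigid $N$ in $\Sh(\ba,\cT)$ is what I expect to be the real point: it guarantees, via Lemma~\ref{LemEpiU} and the fact that $\cS_{\cV}$ is built precisely from the sequences $\sigma_u\otimes X$, that the factorising morphism $u$ actually lies in the class of morphisms whose $\sigma_u$ survives in $\pre\cT$ (so that $u\otimes X$ is a legitimate element of $\widetilde{\Co}(\cS_{\cV})$ rather than merely a morphism with $\tZ(u)$ epi). The main obstacle will be bookkeeping the passage between the rigid object $N=\tZ(Y\otimes X^\ast)$ in $\Sh(\ba,\cT)$ and an honest $u:U\to\unit$ in $\ba$, and checking that the hypothesis indeed forces $\sigma_u\in\pre\cT$ so that $u\in\cV$ in the precise sense used to define $\cS_{\cV}$; everything else is an application of Corollary~\ref{Cor227}, Lemma~\ref{Lem2bound} and Lemma~\ref{LemEpiU}.
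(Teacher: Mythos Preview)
Your core argument is correct and matches the paper's approach: both hinge on Lemma~\ref{LemEpiU} applied to $h=\tZ$, combined with Corollary~\ref{Cor227}. The paper invokes Corollary~\ref{Cor227}(ii) once, whereas you use Corollary~\ref{Cor227}(i) for each inclusion separately; the key input (factoring an arbitrary $q$ with $\tZ(q)$ epi through some $u\otimes X$ with $u\in\cV$) is identical.

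Your final paragraph, however, misplaces the role of the exactness hypothesis. Membership $u\in\cV$ means \emph{precisely} that $\tZ(u)$ is an epimorphism --- nothing more --- and Lemma~\ref{LemEpiU} hands you this directly; there is no further condition ``$\sigma_u\in\pre\cT$'' needed for $u\otimes X$ to lie in $\Co(\cS_{\cV})$. In the paper's route the hypothesis enters at the very first step, and for the \emph{other} inclusion: for $u\in\cV$ the object $\tZ(U)$ is rigid with $\tZ(u)\tto\unit$ an epimorphism, so the assumption gives exactness of $\tZ(\sigma_u)$, hence of $\tZ(\sigma_u\otimes X)$ by right exactness of the tensor product. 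This is exactly the statement $\cS_{\cV}\subset\pre\cT$, which is the standing premise required to invoke Corollary~\ref{Cor227}(ii). Your argument via Corollary~\ref{Cor227}(i) in fact bypasses this premise, since for $\cT_{\cV}\subset\cT$ you only need $R_{u\otimes X}\in\cT$, and that already follows from $\tZ(u\otimes X)$ being an epimorphism (Lemma~\ref{LemTPT} and Theorem~\ref{ThmBQ}(ii)(b)).
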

\begin{proof}
By assumption and since the tensor product in $\Sh(\ba,\cT)$ is right exact, the sequence $\tZ(\sigma_u\otimes X)$ is exact for every $u\in\cV$ and $X\in\ba$. 
That $\cT=\cT_{\cV}$ is an instance of Corollary~\ref{Cor227}(ii), by application of Lemma~\ref{LemEpiU} to $h=\tZ$. 
\end{proof}

\begin{lemma}\label{LemTMon}
For $\cV\subset\cU^0$, the topology $\cT_{\cV}$ is monoidal if and only if for every $A\in\ba$ and $u:U\to\unit$ in $\cV$, there exists $v:V\to\unit$ in $\bar{\cV}$ and a morphism $f:V\otimes A\to A\otimes U$ such that the following diagram is commutative
$$
\xymatrix{
A\otimes U\ar[rr]^{A\otimes u}&&A.\\
&V\otimes A\ar[lu]^{f}\ar[ru]_-{v\otimes A}
}$$
\end{lemma}
\begin{proof}
For the `only if' direction, we apply Theorem~\ref{ThmBiclosed}(c) for the case $X=\unit$. 
For fixed $A,u$, we can reformulate the existence of $v,f$ as in the lemma as demanding that $A\otimes R_u$ is in $\cT_{\cV}(A)$. Hence the existence of such $v,f$ are necessary for $\cT_{\cV}$ to be monoidal.

Conversely, assume that we always have $v,f$ as in the lemma. For arbitrary $R\in\cT_{\cV}(X)$, by definition there exists $u\in \cV$ with $u\otimes X\in R$. It follows that $v\otimes A\otimes X$ is in $A\otimes R$ and hence $A\otimes R\in \cT_{\cV}(A\otimes X)$. Since by the very construction of $\cT_{\cV}$ we always have $R\otimes A\in \cT_{\cV}(X\otimes A)$, this concludes the proof, again by Theorem~\ref{ThmBiclosed}(c).
\end{proof}

\begin{remark}
\begin{enumerate}[label=(\roman*)] 
\item If for every $u:U\to\unit$ in $\cV$, there exists an object $(U,\gamma: -\otimes U\stackrel{\sim}{\Rightarrow} U\otimes -)$ of the Drinfeld centre of $\ba$, then $\cT_{\cV}$ is monoidal.
\item For fixed $A\in\ba$ and $u: U\to\unit$ in $\cV$, the condition in Lemma~\ref{LemTMon} is equivalent with the condition that we can complete the following commutative square
$$\xymatrix{
\unit\ar[rr]^{\co_A}&&A\otimes A^\ast\\
V\ar@{-->}[u]\ar@{-->}[rr]&&A\otimes U\otimes A^\ast,\ar[u]_{A\otimes u \otimes A^\ast}
}$$
in a way that the left vertical arrow is in $\bar{\cV}$.\end{enumerate}
\end{remark}

\begin{prop}\label{PropUniMon}
Consider a subclass $\cV\subset\cU^0$ and assume that $\cT_{\cV}$ is monoidal, so that $\Sh(\ba,\cT_{\cV})$ is a biclosed Grothendieck category and $\tZ:\ba\to\Sh(\ba,\cT_{\cV})$ canonically monoidal. 
\begin{enumerate}[label=(\roman*)] 
\item For  another biclosed Grothendieck category $\bC$, composition with $\tZ$ yields an equivalence
$$[\Sh(\ba,\cT_{\cV}), \bC]^{\otimes}_{cc}\;\stackrel{\sim}{\to}\; [\ba,\bC]^\otimes_{\cV},$$
where $[\ba,\bC]^\otimes_{\cV}$ stands for the category of monoidal functors $F$ for which $F(\sigma_u)$ is exact in $\bC$, for every $u\in\cV$.
\item The sequence $\tZ(\sigma_u)$ is exact in $\Sh(\ba,\cT_{\cV})$ for every $u\in\cV$.
\item If $\mX$ is a scheme over $k$, then $[\Sh(\ba,\cT_{\cV}),\QCoh\mX]^{\otimes}_{cc}$ is equivalent to the category of monoidal functors $F:\ba\to\QCoh\mX$ for which $F(u)$ is an epimorphism, for every $u\in\cV$.
\end{enumerate}
\end{prop}
\begin{proof}
Part (i) is a special case of Theorem~\ref{ThmUniMon}. Part (ii) follows from Corollary~\ref{CorTriv}. Part (iii)  follows from part~(i) and Remark~\ref{RemOX}. 
\end{proof}





\begin{lemma}
For $\cV\subset\cU$, the following are equivalent:
\begin{enumerate}[label=(\alph*)]
\item $\cT_{\cV}=\cT_{\cU}$.
\item For every non-zero $u:U\to\unit$, there exists $v:V\to\unit$ in $\bar{\cV}$ and $f:V\to U$ with $v=u\circ f$.
\end{enumerate}
In that case we say that $\cV$ is {\bf dense}.
\end{lemma}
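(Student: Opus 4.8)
The plan is to deduce the equivalence from Corollary~\ref{Cor227}(i), which converts a comparison of topologies $\topp(\cS_1)\subset\topp(\cS_2)$ into an elementary factorisation condition on the morphisms in $\Co(\cS_2)$. Since $\cV\subset\cU$ forces $\cS_{\cV}\subset\cS_{\cU}$, Theorem~\ref{ThmTPT}(iv) already gives $\cT_{\cV}\subset\cT_{\cU}$, so (a) is equivalent to the single inclusion $\cT_{\cU}\subset\cT_{\cV}$. Applying Corollary~\ref{Cor227}(i) with $\cS_1=\cS_{\cV}$ and $\cS_2=\cS_{\cU}$, this inclusion holds precisely when every $q\in\Co(\cS_{\cU})$ admits a commutative triangle $r=q\circ g$ with $r\in\widetilde{\Co}_X(\cS_{\cV})$ and $X$ the target of $q$.

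I would then unwind the two classes involved. By construction $\Co(\cS_{\cU})=\{\,u\otimes X\colon U\otimes X\to X\mid u\in\cU,\ X\in\ba\,\}$, while Lemma~\ref{PropsTV}(i) identifies $\widetilde{\Co}_{\unit}(\cS_{\cV})$ with $\bar{\cV}$. I also need the extension $v\otimes X\in\widetilde{\Co}_X(\cS_{\cV})$ for all $v\in\bar{\cV}$, $X\in\ba$, which follows since $\cS_{\cV}$ is stable under $-\otimes X$, so tensoring with $X$ the iterated composition of coverings from $\cS_{\cV}$ that realises $v$ transports it from target $\unit$ to target $X$. With this, (b)$\Rightarrow$(a) is immediate: for $q=u\otimes X$, condition (b) gives $v\in\bar{\cV}$ and $f\colon V\to U$ with $u\circ f=v$, and then $r:=v\otimes X\in\widetilde{\Co}_X(\cS_{\cV})$ together with $g:=f\otimes X$ satisfies $q\circ g=(u\otimes X)\circ(f\otimes X)=v\otimes X=r$.

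For (a)$\Rightarrow$(b) I would specialise the factorisation criterion to $X=\unit$: given a non-zero $u\colon U\to\unit$, the morphism $u\otimes\unit$ (identified with $u$ via the unit constraint) lies in $\Co(\cS_{\cU})$, so the criterion produces $r\in\widetilde{\Co}_{\unit}(\cS_{\cV})=\bar{\cV}$ and a morphism $g$ with $u\circ g=r$; since the elements of $\bar{\cV}$ have a single non-formal source object $V$, this $g$ is a genuine morphism $V\to U$, and taking $v:=r$ yields (b). The conceptual content is carried entirely by Corollary~\ref{Cor227}(i) together with the identification $\widetilde{\Co}_{\unit}(\cS_{\cV})=\bar{\cV}$, so I expect the only mild obstacle to be the bookkeeping around the operation $-\otimes X$ and the coherence isomorphisms; if preferred one can replace $\ba$ by a strict monoidal model at the outset to suppress the latter.
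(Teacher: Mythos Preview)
Your proposal is correct and follows essentially the same approach as the paper: reduce to the inclusion $\cT_{\cU}\subset\cT_{\cV}$ (since $\cS_{\cV}\subset\cS_{\cU}$ gives the other inclusion) and apply Corollary~\ref{Cor227}(i), using $\widetilde{\Co}_{\unit}(\cS_{\cV})=\bar{\cV}$ from Lemma~\ref{PropsTV}(i). The paper's proof is a single sentence invoking Corollary~\ref{Cor227}(i); you have simply spelled out the bookkeeping (the stability of $\cS_{\cV}$ under $-\otimes X$ and the specialisation to $X=\unit$) that the paper leaves implicit.
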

\begin{proof}
Since $\cT_{\cV}\subset\cT_{\cU}$, this is an almost immediate application of Corollary~\ref{Cor227}(i).
\end{proof}

\begin{remark}
In \cite[Analogy~3.2.3]{AbEnv}, a simplistic non-enriched analogue is given of $\Sh(\bD,\cT_{\cV})$ for $\cV\subset\cU^{ex}$.

\end{remark}

\subsection{Universal properties}

\begin{theorem}\label{Thm3case}
Let $\cT$ be a monoidal Grothendieck topology on $\ba$ for which $\Sh(\ba,\cT)\simeq\Ind\bT$ for a tensor category $\bT$.
Then we have $\cT=\cT_{\cV}$, with $\cV=\{u\in\cU\,|\, \tZ(u)\not=0\}$, and $J_{\cV}$ is the kernel of $\tZ$, so in particular $\ann_{\cV}(U)=\{u:U\to\unit\,|\, \tZ(u)=0\}$.
 Furthermore,  for any other tensor category $\bT_1$, composing with $\ba\to\bT$ induces an equivalence 
 $$[\bT,\bT_1]^{\otimes}_{ex}\;\,\stackrel{\sim}{\to}\,\;[\ba/J_{\cV},\bT_1]^{\otimes}_{faith}$$
 between the respective categories of monoidal functors which are exact or faithful.
\end{theorem}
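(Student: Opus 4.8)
The plan is to split the statement into three parts: the identity $\cT=\cT_\cV$, the identification $J_\cV=\ker\tZ$, and the universal property equivalence.

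For $\cT=\cT_\cV$: since $\cT$ is monoidal, $\tZ$ carries a monoidal structure, so (as $\ba$ is rigid) each $\tZ(X)$ is a rigid object of $\Ind\bT$; a rigid object of $\Ind\bT$ is compact (its corepresented functor commutes with filtered colimits, via the dual), hence lies in $\bT$. Thus $\tZ$ factors through $\bT$ and $\cT$ is noetherian. I would then apply Proposition~\ref{PropTQC}: its remaining hypothesis requires that every epimorphism $N\tto\unit$ in $\Ind\bT$ from a rigid $N$ have $N\otimes N\to N\to\unit\to0$ exact; such an $N$ lies in $\bT$, the map is epimorphic already in $\bT$ (its image, computed in $\Ind\bT$, lies in $\bT$), it is nonzero and $\unit$ is simple in $\bT$, so it belongs to $\cU(\bT)=\cU^{ex}(\bT)$ by Theorem~\ref{ThmNew}(ii), which is precisely the required exactness. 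Proposition~\ref{PropTQC} then gives $\cT=\cT_{\cV'}$ with $\cV'=\{u:U\to\unit\mid\tZ(u)\text{ an epimorphism}\}$, and since $\unit$ is simple in $\bT$, $\tZ(u)$ is an epimorphism exactly when it is nonzero, so $\cV'=\cV$. Part two, that $J_\cV=\ker\tZ$ and hence the asserted description of $\ann_\cV$, is then immediate from Lemma~\ref{PropsTV}(iii), using that $\cT_\cV$ is monoidal.

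For the universal property I would apply Proposition~\ref{PropUniMon}(i) with the biclosed Grothendieck category $\bC=\Ind\bT_1$, obtaining $[\Ind\bT,\Ind\bT_1]^{\otimes}_{cc}\simeq[\ba,\Ind\bT_1]^{\otimes}_{\cV}$, and then identify both sides with the categories in the statement. On the left: a cocontinuous monoidal $H:\Ind\bT\to\Ind\bT_1$ preserves duals, hence sends $\bT$ to the rigid (equivalently compact) objects of $\Ind\bT_1$, i.e.\ into $\bT_1$; its restriction $H|_\bT:\bT\to\bT_1$ is right exact by cocontinuity and left exact by applying right exactness to the dual of a short exact sequence (duality is exact in a tensor category and $H$ commutes with it), hence exact; and $H\mapsto H|_\bT$, $G\mapsto\Ind G$ are mutually inverse on $1$- and $2$-cells by the standard $\Ind$-extension argument, yielding $[\Ind\bT,\Ind\bT_1]^{\otimes}_{cc}\simeq[\bT,\bT_1]^{\otimes}_{ex}$. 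On the right: a monoidal $F:\ba\to\Ind\bT_1$ likewise lands in $\bT_1$ (duals again); by Theorem~\ref{ThmNew}(i) and simplicity of $\unit_1$, requiring $F(\sigma_u)$ exact for all $u\in\cV$ is the same as requiring $F(u)\neq0$ for all $u\in\cV$, which (since $\cV$ consists precisely of the morphisms into $\unit$ not lying in $J_\cV$) is exactly the condition that the functor induced by $F$ on $\ba/J_\cV$ be faithful; writing $F\simeq H\circ\tZ$ through Proposition~\ref{PropUniMon}(i) and using that $H|_\bT$ is faithful then shows $\ker F=\ker\tZ=J_\cV$, so $F$ factors as $\ba\to\ba/J_\cV\to\bT_1$ with the second functor faithful. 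This gives $[\ba,\Ind\bT_1]^{\otimes}_{\cV}\simeq[\ba/J_\cV,\bT_1]^{\otimes}_{faith}$, and chasing an object $G$ through the three equivalences --- using $\tZ=\iota_\bT\circ u_\bT$ with $u_\bT:\ba\to\bT$ and $\Ind G\circ\iota_\bT=\iota_{\bT_1}\circ G$ --- identifies the composite with precomposition by $\ba/J_\cV\to\bT$, i.e.\ by $\ba\to\bT$.

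The first two parts and the $2$-categorical bookkeeping are routine. The main obstacle is the right-hand identification in the third step: establishing $\ker F=J_\cV$ exactly --- not merely $\supseteq J_\cV$ --- for $F\in[\ba,\Ind\bT_1]^{\otimes}_{\cV}$. The clean route, as above, is to write $F\simeq H\circ\tZ$ and invoke that the exact tensor functor $H|_\bT:\bT\to\bT_1$ is faithful, which itself follows because if $H|_\bT$ killed some $X\neq0$ it would kill $F(X)^\ast\otimes F(X)$ and the epimorphism $\ev_X:X^\ast\otimes X\to\unit$, forcing $\unit_1=0$ and contradicting that $\bT_1$ is a tensor category.
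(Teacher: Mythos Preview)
Your proof is correct and follows the same overall structure as the paper: Proposition~\ref{PropTQC} together with Theorem~\ref{ThmNew}(ii) for $\cT=\cT_\cV$, Lemma~\ref{PropsTV}(iii) for $\ker\tZ=J_\cV$, and Proposition~\ref{PropUniMon}(i) combined with identifications of both sides for the universal property. You are more explicit than the paper about why $\cT$ is noetherian and about why right exact monoidal functors between tensor categories are exact (the paper simply cites \cite[Corollaire~2.10(i)]{Del90} and \cite[Lemma~1.3.7]{AbEnv} for these points).

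The one substantive difference is in establishing $\ker F = J_\cV$ for $F\in[\ba,\bT_1]^\otimes_\cV$. You factor $F\simeq H\circ\tZ$ and invoke faithfulness of the exact tensor functor $H|_\bT$ to conclude $\ker F=\ker\tZ$. The paper instead argues directly: since $F(v)$ is surjective for every $v\in\bar\cV=\cV$, the defining relation $w^\ast\circ v=0$ for $w\in\ann_\cV$ forces $F(w)=0$; combined with the dichotomy that every morphism to $\unit$ lies in $\cV$ or in $\ann_\cV$ (already established in the first part of the theorem), this yields $\ker F(-,\unit)=\ann_\cV$ and hence $\ker F=J_\cV$ via the bijection between tensor ideals and sieves on $\unit$. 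The paper's route is self-contained and avoids the separate appeal to faithfulness of exact tensor functors; your route is more structural and makes the role of the factorisation through $\bT$ explicit. Both are valid.
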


\begin{proof}
By Theorem~\ref{ThmNew}(ii), that $\cT=\cT_{\cV}$ is an instance of Proposition~\ref{PropTQC}. That $J_{\cV}$ is the kernel of $\tZ$ follows from Lemma~\ref{PropsTV}(iii).


Composition with the relevant monoidal functors yield equivalences
$$[\bT,\bT_1]^{\otimes}_{ex}\;\simeq\;[\Ind\bT,\Ind\bT_1]^{\otimes}_{cc}\;\simeq\; [\ba,\Ind\bT_1]^\otimes_{\cV}.$$
Indeed, the right equivalence is Proposition~\ref{PropUniMon}(i). Furthermore, Theorem~\ref{ThmUniMon} establishes an equivalence between the category in the middle and the category of right exact monoidal functors $\bT\to\Ind\bT_1$. By \cite[Corollaire~2.10(i)]{Del90}, right exact monoidal functors $\bT\to \Ind\bT_1$ are automatically exact. Since the category of rigid objects in $\Ind\bT_1$ is equivalent to $\bT_1$, see~\cite[Lemma~1.3.7]{AbEnv}, the left equivalence follows.

The latter observation also shows that $[\ba,\Ind\bT_1]^\otimes_{\cV}$ is equivalent to the category of functors in $[\ba,\bT_1]^\otimes$ which send every sequence $\sigma_u, u\in\cV$, to an exact sequence.
By Theorem~\ref{ThmNew}(ii) the latter equals the subcategory of $F\in [\ba,\bT_1]^\otimes$ such that $F(v)\not=0$ (or equivalently $F(v)$ is surjective) for each $v\in\cV=\bar{\cV}$. By definition of $\ann_{\cV}$, the latter condition on $F$ also implies that $F(w)=0$ for each $w\in \ann_{\cV}$. By the above, we know that every morphism to $\unit$ in $\ba$ is either in $\cV$ or $\ann_{\cV}$. The condition on $F$ is thus that its kernel is precisely $J_{\cV}$. This concludes the proof.
\end{proof}

\begin{corollary}\label{CorEx}
The following are equivalent for $\cV\subset\cU^{ex}$ with $\cT_{\cV}$ monoidal.
\begin{enumerate}[label=(\roman*)] 
\item  $\ba$ is a full monoidal subcategory of a tensor category such that every object in the tensor category is a quotient of an object in $\ba$, and $\cV$ is dense.
\item $\Sh(\ba,\cT_{\cV})$ is the ind-completion of a tensor category.
\end{enumerate}
Furthermore, either condition implies $\cU=\cU^{ex}$.
\end{corollary}

\begin{remark}
By Theorem~\ref{Thm3case}, if for a topology $\cT$ we have $\Sh(\ba,\cT)\simeq\Ind\bT$, for a tensor category $\bT$, such that $\ba\to\bT$ is faithful and monoidal, then $\cT=\cT_{\cU}$. Furthermore, a necessary condition is $\cU=\cU^{ep}$ by Lemma~\ref{PropsTV}(iii). Finally, by Lemma~\ref{lemlim}, $\bT$ will be a tensor category over $k$ if $\ba(\sigma_u,\unit)$ is exact for every non-zero $u:U\to\unit$ in $\ba$, {\it i.e.} when
$$0\to\ba(\unit,\unit)\to\ba(U,\unit)\to\ba(U\otimes U,\unit)$$
is always exact.
\end{remark}

\begin{corollary}\label{CorLow}
Consider a monoidal functor $u:\ba\to\bT$ to a tensor category $\bT$ and denote its kernel by $J$. Assume that $u$ satisfies (F) and (G) in Lemma~\ref{LemLow}.
Then composition with $\ba/J\to\bT$ induced from $u$ yields, for every tensor category $\bT_1$, an equivalence
$$[\bT,\bT_1]^{\otimes}_{ex}\;\stackrel{\sim}{\to}\;[\ba/J,\bT_1]^{\otimes}_{faith}.$$
Assume that $u$ is faithful (or replace $\ba$ by $\ba/J$), then $\Ind\bT\simeq \Sh(\ba,\cT_{\cU})$ and for any biclosed Grothendieck category $\bC$, the functor
$$[\bT,\bC]_{ex}^{\otimes}\;\to\; [\ba,\bC]^{\otimes}$$
is fully faithful.
\end{corollary}
\begin{proof}
By Lemma~\ref{LemLow}, $\ba/J\to\bT$ is a creator. The result is thus an immediate application of Theorem~\ref{Thm3case} and Proposition~\ref{PropUniMon}(i).
\end{proof}

\begin{corollary}
Consider monoidal creators $u_i:\ba\to\bT_i$, for tensor categories $\bT_i$, with $i\in\{1,2\}$.
The kernels of $u_1$ and $u_2$ are either equal or incomparable for the inclusion order.
\end{corollary}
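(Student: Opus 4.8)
The statement is symmetric in $u_1,u_2$, so it suffices to prove that $J_1\subseteq J_2$ implies $J_1=J_2$, where $J_i$ denotes the kernel of $u_i$. The plan is to convert the inclusion $J_1\subseteq J_2$ into an exact monoidal comparison functor $\bT_2\to\bT_1$ and then to force that functor to be faithful using rigidity.

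First I would record the topological translation. Since $u_i\colon\ba\to\bT_i$ is a monoidal creator, $\Ind\bT_i$ is a localisation of $\PSh\ba$ (Theorems~\ref{ThmGP} and~\ref{ThmBQ}), corresponding to a Grothendieck topology $\cT_i$ with $\Sh(\ba,\cT_i)\simeq\Ind\bT_i$ and $\tZ_i$ identified with $u_i$; as $\Ind\bT_i$ is biclosed and $u_i$ is monoidal, $\cT_i$ is monoidal (Theorem~\ref{ThmBiclosed}). By Theorem~\ref{Thm3case}, $\cT_i=\cT_{\cV_i}$ where $\cV_i$ is the class of nonzero $u\colon U\to\unit$ with $u_i(u)\neq 0$, and $J_i=J_{\cV_i}$ with $J_i(-,\unit)=\ann_{\cV_i}$. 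Since a tensor ideal of $\ba$ is determined by its value on $\unit$, the hypothesis $J_1\subseteq J_2$ is equivalent to $\ann_{\cV_1}\subseteq\ann_{\cV_2}$, which forces $\cV_2\subseteq\cV_1$, hence $\cS_{\cV_2}\subseteq\cS_{\cV_1}$, and finally $\cT_2=\topp(\cS_{\cV_2})\subseteq\topp(\cS_{\cV_1})=\cT_1$ because $\topp$ preserves inclusions (Theorem~\ref{ThmTPT}(iv)).

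Next I would build the comparison functor. From $\cT_2\subseteq\cT_1$ and Theorem~\ref{ThmBQ}, $\Sh(\ba,\cT_1)$ is a Giraud subcategory of $\Sh(\ba,\cT_2)$; let $L\colon\Sh(\ba,\cT_2)\to\Sh(\ba,\cT_1)$ be the associated exact cocontinuous sheafification, so $L\circ\tZ_2\simeq\tZ_1$, and observe that $L$ is monoidal by the universal property of monoidal localisations (equation~\eqref{UniIK}). Both $\cT_{\cV_i}$ are noetherian by Lemma~\ref{PropsTV}, so the inclusions $\Sh(\ba,\cT_i)\hookrightarrow\PSh\ba$ preserve filtered colimits (Theorem~\ref{ThmNoe}); since $\Sh(\ba,\cT_1)\hookrightarrow\PSh\ba$ factors through $\Sh(\ba,\cT_2)\hookrightarrow\PSh\ba$, the inclusion $\Sh(\ba,\cT_1)\hookrightarrow\Sh(\ba,\cT_2)$ also preserves filtered colimits, so its left adjoint $L$ preserves compact objects. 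Because the compact objects of $\Ind\bT_i$ are precisely the objects of $\bT_i$, $L$ restricts to an exact monoidal functor $\ell\colon\bT_2\to\bT_1$ with $\ell\circ u_2\simeq u_1$.

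Finally I would show $\ell$ is faithful and conclude. As $\ell$ is an exact functor between abelian categories, faithfulness amounts to $\ell(N)\neq 0$ for every $N\neq 0$ in $\bT_2$. For such $N$ the evaluation $\ev_N\colon N^\ast\otimes N\to\unit$ is nonzero by rigidity, so its image is a nonzero subobject of the simple unit of $\bT_2$, hence $\ev_N$ is an epimorphism; applying the exact monoidal $\ell$ exhibits the unit of $\bT_1$ as a quotient of $\ell(N^\ast)\otimes\ell(N)$, and $\ell(N)=0$ would then give $\unit=0$ in $\bT_1$, a contradiction. Thus $\ell$ is faithful, and from $u_1\simeq\ell\circ u_2$ we get $u_1(f)=0\iff u_2(f)=0$ for every morphism $f$ of $\ba$, that is $J_1=J_2$. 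The step I expect to be the crux is showing that $L$ preserves compact objects, so that it restricts to $\bT_2\to\bT_1$ at all: this is exactly where the noetherianity of the topologies $\cT_{\cV}$ enters, and it requires the mild care of checking that the inclusion $\Sh(\ba,\cT_1)\hookrightarrow\Sh(\ba,\cT_2)$ --- and not merely the inclusion into $\PSh\ba$ --- commutes with filtered colimits.
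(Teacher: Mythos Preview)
Your proof is correct but takes a much longer route than the paper's. Both arguments begin identically: from Theorem~\ref{Thm3case} one has $J_i(-,\unit)=\ann_{\cV_i}$ with $\cV_i$ the complement of $\ann_{\cV_i}$ in $\cU^0$, so $J_1\subseteq J_2$ gives $\ann_{\cV_1}\subseteq\ann_{\cV_2}$ and hence $\cV_2\subseteq\cV_1$. At this point the paper simply applies the order-\emph{reversing} property of $\cV\mapsto\ann_{\cV}$ to conclude $\ann_{\cV_2}\subseteq\ann_{\cV_1}$, i.e.\ $J_2\subseteq J_1$, and is done in one more line. You instead upgrade $\cV_2\subseteq\cV_1$ to $\cT_2\subseteq\cT_1$, construct the localisation comparison $L$, verify via noetherianity that it preserves compacts and so restricts to an exact monoidal $\ell\colon\bT_2\to\bT_1$, and finally use rigidity and simplicity of $\unit$ to force $\ell$ faithful. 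This is all correct and has the merit of actually exhibiting the comparison functor between the two tensor categories, but the machinery you flag as the ``crux'' (compact-preservation of $L$) is entirely avoidable: the corollary follows from pure combinatorics on $\cV$ and $\ann_{\cV}$ once Theorem~\ref{Thm3case} is in hand.
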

\begin{proof}
By adjunction, the kernels of $u_1,u_2$ are determined by the kernels of $\ba(-,\unit)\to\bT_i(-,\unit)$. Denote by $\cV_i$ the class of morphisms $U\to\unit$ which are {\em not} in the kernel of $\ba(-,\unit)\to\bT_i(-,\unit)$.  Assume the kernel of $u_1$ is contained in the kernel of $u_2$. This means that $\cV_1\supset \cV_2$, so also $\ann_{\cV_1} \supset \ann_{\cV_2}$. By Theorem~\ref{Thm3case} the kernels are given (up to the above adjuntion) by $\ann_{\cV_1}$ and $\ann_{\cV_2}$, so by assumption we also have $\ann_{\cV_1}\subset\ann_{\cV_2}$, whence the kernels of $u_1$ and $u_2$ are equal.
\end{proof}

%
%

\subsection{Graded modules and projective space}

\subsubsection{}\label{DefsGraded}
Let $S=\bigoplus_{i\in\mZ}S_i$ be a commutative and $\mZ$-graded $k$-algebra, such that $k\to S_0$ is an isomorphism, $S_1$ is finite dimensional and $S$ is generated in degree $1$. We exclude the trivial case $k=S$.

We consider the category $S\mbox{-gMod}$ of all $\mZ$-graded $S$-modules with homogeneous morphisms of degree zero, which is symmetric monoidal for $\otimes_S$. Then the full subcategory of finitely generated free modules constitutes a symmetric rigid monoidal category $\ba$ over $k$ with finite dimensional morphism spaces. The indecomposable objects in $\ba$ are the modules $S\langle j\rangle$, for $j\in\mZ$, which satisfy $S\langle j\rangle_i=S_{i-j}.$ We denote by $\cV\subset\cU(\ba)$ the morphisms to the unit~$S$ for which the cokernel in $S\mbox{-gMod}$ is finite dimensional.

\begin{prop}\label{PropQCoh1}
\begin{enumerate}[label=(\roman*)]
\item There are no fully faithful monoidal functors from $\ba$ to any tensor category.
\item Let $\mY$ be the projective $k$-scheme $\Proj S$, then $\QCoh\mY\simeq\Sh(\ba,\cT_{\cV})$.
\end{enumerate}
\end{prop}
\begin{proof}
 Clearly, every indecomposable object $M$ in $\ba$ is invertible (that is $M^\ast \otimes M \to \unit$ is an isomorphism). However there exist non-zero morphisms between non-isomorphic indecomposable objects. As invertible objects in tensor categories are necessarily simple, part (i) follows.
 
 It is known, see \cite[Chapter 3]{Se}, that $\QCoh\mY$ is the localisation of $S\mbox{-gMod}\simeq\PSh\ba$ with respect to the subcategory of torsion modules (unions of modules finite dimensional over $k$). By Theorem~\ref{ThmBQ} (and using Remark~\ref{RemEnv}), the topology $\cT$ corresponding to the localisation is given by all graded submodules $R\subset S\langle i\rangle$ for which $S\langle i\rangle/R$ is a torsion module, {\it i.e.} finite dimensional. Whence $\cT=\cT_{\cV}$, proving part (ii) by Remark~\ref{RemEnv}.
 \end{proof}
 
 \begin{prop}\label{PropP1}
 Consider  $S=k[x_0,x_1,\cdots, x_n]$ for some $n\in\mN$, with grading defined by $\deg x_i=1$. 
 \begin{enumerate}[label=(\roman*)]
 \item We have $\cV\subset\cU^{ex}\subset\cU^{ep}=\cU$, where the inclusion $\cU^{ex}\subset\cU^{ep}$ is strict.
 \item If $n=1$, then $\cV=\cU^{ex}$.
 \end{enumerate}
 \end{prop}
 

\begin{proof}
It is clear that every non-zero morphism $S\to ?$ in $\ba$ is a monomorphism (it is even a monomorphism in $S$-gMod).
It follows that $\cU^{ep}=\cU$.
Next, we claim that a non-zero morphism $u:U\to S$ is in $\cU^{ex}$ if and only if $\im u$ is not included in a non-trivial principal homogeneous ideal. Both parts then follow easily.

An arbitrary $u:U\to S$ can be written as follows. Consider a finite collection of natural numbers $\{n_i\}$ and define
$$u:U:=\bigoplus_{i}S\langle n_i\rangle\;\to\; S,\quad (p_i)_i\mapsto\sum_i a_ip_i,$$
for some $a_i\in S_{n_i}$. Then $\im u$ is not contained in any non-trivial principal ideal if and only if the greatest common denominator of $\{a_i\}$ is $1$.

Fix a morphism 
$$f:U\to S\langle j\rangle,\quad (p_i)_i\mapsto \sum_ib_ip_i$$ with $j\in\mZ$, defined by $b_i\in S_{n_i-j}$. Then $f\circ (U\otimes u-u\otimes U)=0$ if and only if $b_ia_l=b_la_i$ for all $i,l$. The latter is equivalent to the condition that there exists a rational function $q$ such that $q=b_i/a_i$ for all $i$.
On the other hand, we have that $f$ factors as $U\xrightarrow{u}S\to S\langle j\rangle$ if and only if $b_i/a_i$ is a polynomial, independent of $i$.

Now assume that the $a_i$ have greatest common denominator $a\not=1$ and set $b_i:=a_i/a$, so $b_i/a_i=1/a$. By the above paragraph, this yields a morphism $f:U\to S\langle \deg a\rangle$ which contradicts exactness of $\sigma_u$.
On the other hand, if the greatest common denominator of the $a_i$ is $1$, the above paragraph shows similarly that the sequence is exact. \end{proof}

\begin{lemma}\label{Lem1mor}
Let $S$ be as in Proposition~\ref{PropP1} and let $v:S\langle 1\rangle^{\oplus n+1}\to S$ be the morphism corresponding to $\{S\langle 1\rangle\to S:1\mapsto x_i\,|\,0\le i\le n\}$. Then $\cT_{\cV}=\cT_{\{v\}}$.
\end{lemma}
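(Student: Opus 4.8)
The plan is to apply Corollary~\ref{Cor227}(i) to the pretopologies $\cS_1=\cS_{\{v\}}$ and $\cS_2=\cS_{\cV}$. One inclusion is immediate: since $\coker v\cong k$ is finite dimensional we have $v\in\cV$, hence $\cS_{\{v\}}\subset\cS_{\cV}$ and therefore $\cT_{\{v\}}=\topp(\cS_{\{v\}})\subset\topp(\cS_{\cV})=\cT_{\cV}$ by Theorem~\ref{ThmTPT}(iv). For the reverse inclusion it suffices, by Corollary~\ref{Cor227}(i), to produce for every morphism $u\otimes X$ in $\Co(\cS_{\cV})$ (with $u\colon U\to\unit$ in $\cV$ and $X\in\ba$) some $r\in\widetilde{\Co}_X(\cS_{\{v\}})$ factoring through $u\otimes X$.

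Write $W:=S\langle1\rangle^{\oplus n+1}$ for the source of $v$. For each $m\ge1$ the morphism $v^{\otimes m}\colon W^{\otimes m}\to\unit$ arises as an $m$-fold composite of morphisms in $\Co(\cS_{\{v\}})$, so $v^{\otimes m}\in\widetilde{\Co}_{\unit}(\cS_{\{v\}})=\overline{\{v\}}$ as in Lemma~\ref{PropsTV}(i); by the same reasoning (the pretopology $\cS_{\{v\}}$ contains $\sigma_v\otimes Y$ for every $Y\in\ba$) one has $v^{\otimes m}\otimes X\in\widetilde{\Co}_X(\cS_{\{v\}})$. It therefore suffices to prove: for every $u\in\cV$ there exist $m\ge1$ and a morphism $g\colon W^{\otimes m}\to U$ in $\ba$ with $v^{\otimes m}=u\circ g$. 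Tensoring this identity with $X$ exhibits the desired commutative triangle $v^{\otimes m}\otimes X=(u\otimes X)\circ(g\otimes X)$, and Corollary~\ref{Cor227}(i) then gives $\cT_{\cV}\subset\cT_{\{v\}}$.

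To prove the displayed claim, identify $W^{\otimes m}$ with the free graded module $S\langle m\rangle^{\oplus(n+1)^m}$; then $v^{\otimes m}$ sends the basis vector labelled by $(i_1,\dots,i_m)\in\{0,\dots,n\}^m$ to the monomial $x_{i_1}\cdots x_{i_m}$, so $\im(v^{\otimes m})$ is the ideal generated by all degree-$m$ monomials, which equals $S_{\ge m}:=\bigoplus_{d\ge m}S_d$ because $S$ is generated in degree $1$. Since $\coker u$ is finite dimensional, the graded submodule $\im u\subset S$ has finite codimension, hence $S_{\ge N}\subset\im u$ for some $N$; taking $m\ge N$ we get $\im(v^{\otimes m})=S_{\ge m}\subset\im u$. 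As $W^{\otimes m}$ is graded free, hence graded projective, and $u$ maps $U$ onto $\im u$, the degree-zero graded morphism $v^{\otimes m}\colon W^{\otimes m}\to\im u\hookrightarrow S$ lifts through $u$ to a degree-zero graded morphism $g\colon W^{\otimes m}\to U$, i.e. a morphism of $\ba$, with $u\circ g=v^{\otimes m}$. This proves the claim, and hence the lemma.

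I do not expect a genuine obstacle here. The two points demanding care are the bookkeeping that identifies $v^{\otimes m}\otimes X$ as an element of $\widetilde{\Co}_X(\cS_{\{v\}})$ through the iterative construction of $\widetilde{\Co}$ (following the pattern of the proof of Lemma~\ref{PropsTV}(i)), and the verification that the projective lift $g$ can be chosen homogeneous of degree zero, so that it genuinely lies in $\ba$.
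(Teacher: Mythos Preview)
Your proof is correct and follows precisely the route the paper indicates: the paper's own proof is simply ``This application of Corollary~\ref{Cor227} is left as an exercise,'' and you have carried out that exercise. The only minor remark is that the inclusion $\cT_{\{v\}}\subset\cT_{\cV}$ follows already from $\cS_{\{v\}}\subset\cS_{\cV}$ and the definition of $\topp$, so citing Theorem~\ref{ThmTPT}(iv) is fine but not strictly necessary.
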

\begin{proof}
This application of Corollary~\ref{Cor227} is left as an exercise.
\end{proof}

The following is a categorical analogue of the usual description of morphisms $\mX\to\mP^n$.
\begin{prop} Assume $n>0$.
For any scheme $\mX$ we have an equivalence between 
$$[\QCoh\mP^n,\QCoh\mX]^{\otimes}_{cc}$$ and the following category. An object $\phi$ is a collection of morphisms $\{\phi_i:\cL\to\unit=\cO_{\mX}\,|\,0\le i\le n\}$ from an invertible sheaf $\cL\in \QCoh\mX$ for which the induced $\oplus_{i=0}^n\cL\to\unit$ is an epimorphism. A morphism from $\{\phi_i:\cL\to\unit\}$ to $\{\psi_i: \cL'\to \unit\}$ is an isomorphism $\cL\to \cL'$ leading to $n+1$ commutative diagrams with $\{\phi_i,\psi_i\,|\, 0\le i\le n\}$. 
\end{prop}
\begin{proof}
Let $S$ be as in Proposition~\ref{PropP1}, in particular $\mP^n=\Proj S$.
Note that $\ba$ is the universal (free) additive monoidal category over $k$ generated by an invertible object $S\langle 1\rangle$ and the $n+1$ morphisms $S\langle 1\rangle\to\unit$ from Lemma~\ref{Lem1mor}.  By Proposition~\ref{PropQCoh1}(ii) and Lemma~\ref{Lem1mor}, we have $\QCoh\mP^n\simeq \Sh(\ba,\cT_{\{v\}})$.
The conclusion now follows from Proposition~\ref{PropUniMon}(iii).
\end{proof} 

\appendix

\section{Generalising non-enriched topologies}\label{App}
In this appendix, $\bc$ always denotes a small (non-enriched) category. 

\subsection{Standard definitions}

We follow \cite[D\'efinition~II.1.3]{SGA4} and \cite[D\'efinition~II.1.1]{SGA4} for the definitions for non-enriched Grothendieck (pre)topologies.

\begin{definition}\label{DefNonEnr} A Grothendieck {\bf pretopology} on a category $\bc$ is an assignment to each $U\in\bc$ of a collection of sets of morphisms $\{U_i\to U\}$, called coverings of $U$, such that:
\begin{enumerate}
\item[(pt0)] If $\{U_i\to U\}$ is a covering and $f:V\to U$ any morphism, then all $U_i\times_UV$ exist.
\item[(pt1)] If $V\to U$ is an isomorphism, then $\{V\to U\}$ is a covering.
\item[(pt2)] If $\{U_i\to U\}$ is a covering and $f:V\to U$ any morphism in $\bc$, then $\{U_i\times_U V\to V\}$ is a covering.
\item[(pt3)] If $\{U_i\to U\}$ and $\{V_{ij}\to U_i\}$ are coverings, then $\{V_{ij}\to U_i\to V\}$ is a covering.
\end{enumerate}
\end{definition}

\begin{definition}
A Grothendieck {\bf topology} on a category $\bc$ is an assignment to each $U\in\bc$ of a collection $\cT(U)$ of subfunctors of $\bc(-,U)$ (sieves on $U$) such that for every $U\in\bc$:
\begin{enumerate}
\item[(T1)] We have $\bc(-,U)\in \cT(U)$;
\item[(T2)] For $R\in \cT(U)$ and a morphism $f:V\to U$  in $\bc$, we have $f^{-1}R\in \cT(V)$;
\item[(T3)] For a sieve $S$ on $U$ and $R\in\cT(U)$ such that for every $V\in\bc$ and $f\in R(V)\subset \bc(V,U)$ we have $f^{-1}S \in \cT(V)$, it follows that $S\in\cT(U)$.
\end{enumerate}
\end{definition}

\subsubsection{}\label{tpt} The topology associated to a pretopology assigns to every $U\in\bc$ the collection of sieves on $U$ which contain a sieve generated by a cover $\{U_i\to U\}$ in the pretopology, see \cite[Proposition~II.1.4]{SGA4}.

 A presheaf $F:\bc^{\op}\to \Set$ is a sheaf for a topology, if 
$F(U)\to \Nat(R,F)$ is an isomorphism for every $U\in\bc$ and every sieve $R\in\cT(U)$. For a given pretopology, a presheaf $F$ is a sheaf if 
\begin{equation}\label{classsheaf2}F(U)\;\to \;\prod_i F(U_i)\;\rightrightarrows\;\prod_{j,l}F(U_j\times_UU_l)\end{equation}
is an equaliser for every covering $\{U_i\to U\}$. These definitions of sheaves agree when we consider a pretopology and its associated topology, see \cite[Corollaire~II.2.4]{SGA4}.

\subsection{Generalisations}

Now we explain how one can gradually loosen Definition~\ref{DefNonEnr} in several directions, while keeping the story in \ref{tpt} intact.

\subsubsection{Omitting (pt1) and (pt3)}\label{rempt13}
Clearly, one can remove (pt1) from the definition of a pretopology and nothing in~\ref{tpt} changes. 

Furthermore, we claim it is possible to generalise further by omitting (pt3) and just compensate by now defining the associated topology to be the one consisting of all sieves which contain compositions of coverings (by compositions of coverings we mean iterations of the type of composition in (pt3)).

Indeed, clearly property (pt0) and (pt2) imposed on coverings extend, by iteration, to arbitrary compositions of coverings. In other words, we can easily construct a genuine pretopology out of  a class of coverings satisfying only (pt0) and (pt2). Furthermore, elementary diagram chasing reveals that sheaves for the associated topology are still the presheaves which satisfy equation~\eqref{classsheaf2}, where we only need to consider coverings (not compositions of coverings).

\subsubsection{Weak pullbacks}
Recall that for a diagram $B\to A\leftarrow C$, a weak pullback is an object~$D$ with morphisms $B\leftarrow D\to C$, so that 
$$\bc(E,D)\;\to\; \bc(E,B)\times_{\bc(E,A)}\bc(E,C)$$
is surjective for every $E\in\bc$.

One can consider another relaxation of Definition~\ref{DefNonEnr}, we keep (pt1) and (pt3) but change (pt0) and (pt2) to
\begin{enumerate}
\item[(pt0')] If $\{U_i\to U\}$ is a covering and $f:V\to U$ any morphism, then every $U_i\to U\leftarrow V$ admits at least one weak pullback.
\item[(pt2')] If $\{U_i\to U\}$ is a covering and $f:V\to U$ any morphism in $\bc$, then $\{U_i'\to V\}$ is a covering for some weak pullbacks $U_i'$ of $U_i\to U\leftarrow V$.
\end{enumerate}

One can easily check that the associated system of sieves (defined just as in \ref{tpt}) yields again a topology (proving this does not even require the fact that $U_i'$ in (pt2') is a weak pullback). Furthermore, one can show, following the classical approach from \cite{SGA4}, that the sheaves for the topology can now be characterised as those presheaves~$F$ for which 
\begin{equation}\label{EqWeak}F(U)\;\to \;\prod_i F(U_i)\;\rightrightarrows\;\prod_{j,l}F(U_{jl})\end{equation}
is an equaliser for every covering and an arbitrary(!) choice $U_{jl}$ of pullback for $U_i\to U\leftarrow U_j$.

\subsubsection{Weak pullback property `up to cover'}

In a further generalisation, we can omit even existence of weak pullbacks and instead augment the definition of a covering to include objects which are not necessarily weak pullbacks, but act as such `up to coverings'.

\begin{definition}
An {\bf extended covering } of $U\in\bc$ is a set $I$ with for each $(i,j)\in I\times I$ a commutative diagram
$$\xymatrix{
U_{ij}\ar[r]\ar[rd]& U_i\ar[rd]\\
&U_j\ar[r]&U.}$$
\end{definition}

We combine this idea together with the generalisation in \ref{rempt13}:

\begin{definition}\label{DefGPT}
A {\bf generalised pretopology} on $\bc$ is a class of extended coverings such that
\begin{enumerate}
\item[(pta)] For an extended covering $\{U_i\leftarrow U_{ij}\to U_j, U_i\to U\,|\, i,j\in I\}$ in the class and a morphism $f:V\to U$ there exists an extended covering $\{V_i\leftarrow V_{ij}\to V_j, V_i\to V\,|\, i,j\in I\}$ yielding commutative squares
$$\xymatrix{
U_i\ar[rr]&& U\\
V_i\ar[rr]\ar@{-->}[u]&&V.\ar[u]}$$
 \item[(ptb)] For an extended covering $\{U_i\leftarrow U_{ij}\to U_j, U_i\to U\,|\, i,j\in I\}$ in the class and a commutative square for some given $i_0,j_0\in I$
 $$\xymatrix{U_{i_0}\ar[rr]&& U\\
 V\ar[u]\ar[rr]&&U_{j_0},\ar[u]}$$
 there exists an extended covering of $V$ such that, for each $V_l\to V$ in the covering, the composite morphisms $V_l\to U_{i_0}$ and $V_l\to U_{j_0}$ both factor via the same morphism $V_l\to U_{i_0j_0}$.
\end{enumerate}
\end{definition}

\begin{theorem}
Consider a generalised pretopology on $\bc$.
\begin{enumerate}
\item The collection of sieves which contain some sieve generated by compositions of the coverings in the extended coverings,  forms a topology on $\bc$. 
\item A presheaf $F:\bc^{\op}\to \Set$ is a sheaf for the topology in (1) if and only if
$$F(U)\to \prod_i F(U_i)\rightrightarrows \prod_{jl} F(U_{jl})$$
is an equaliser for each extended covering in the generalised pretopology.
\end{enumerate}
\end{theorem}
\begin{proof}
Part (1) is straightforward (and does not use condition (ptb)). Part (2) follows from a technical argument, which is essentially a simplified version of the proof in Theorem~\ref{SecSheaves}.
\end{proof}

\subsection{Issues regarding linearisation}\label{Issues}

\subsubsection{}\label{Pres} For a given covering $\{U_i\to U\}$, the sieve it generates is given by the co-equaliser of
$$\bigsqcup_{jl}\left(\bc(-,U_j)\times_{\bc(-,U)}\bc(-,U_l)\right)\;\rightrightarrows\; \bigsqcup_i\bc(-, U_i),$$
which is a direct consequence of the fact that coproducts and pullbacks commute in $\Set$.

This observation is the key to the proof that condition~\eqref{classsheaf2}, or more generally~\eqref{EqWeak}, describes sheaves for the associated topology.

\subsubsection{} In $\Ab$, coproducts and pullbacks do not commute. As a consequence, for morphisms of abelian groups $\{B_i\to B\,|\,i\in I\}$, the sequence
$$\bigoplus_{kl}\left(B_k\times_BB_l\right)\;\to\; \bigoplus_i B_i\;\to\; B,$$
with the first morphism the difference of the two projections, is not necessarily exact if $|I|>2$, see Example~\ref{ExBB} below. This implies that in an additive category, even the assumption of the existence of pullbacks is not enough to secure a canonical presentation for the sieve generated by a covering. Furthermore, linearisation does not preserve pullbacks, see~\ref{kFibre}.

In case of finite coverings, passing to the additive envelope, one can reduce finite coverings to singleton coverings, but for infinite coverings even that is not possible. This is what motivates us to take an approach to additive pretopologies which linearises Definition~\ref{DefGPT} rather than Definition~\ref{DefNonEnr}.

\begin{example}\label{ExBB}
Consider the morphism of abelian groups 
$$\mZ^3\to \mZ^2,\quad (a,b,c)\mapsto (a-b,a-c).$$
If we let $B_1,B_2,B_3$ be the three copies of $\mZ$ in $\mZ^3$ and $B=\mZ^2$, then $B_i\times_B B_j=0$ for all $i\not=j$.
\end{example}

\subsection{Linearising unenriched pretopologies}\label{SecLinearise}
For a commutative ring $k$, we can consider the $k$-linear category $k\bc$. We have $\Ob (k\bc)=\Ob\bc$ and $(k\bc)(X,Y)=k(\bc(X,Y))$.
\subsubsection{}\label{kFibre}

For a pullback $A\times_C B$ in $\bc$, the same object in $k\bc$ need not be a pullback. However, for a morphism $A\to B$ in $\bc$, the pullback $A\times_BA$ is a weak pullback in $k\bc$. This is a direct consequence of the observation that for a function of sets $S\to T$ we have a canonical surjection
\begin{equation}\label{eqST}k(S\times_TS)\;\tto\; kS\times_{kT}kS,\qquad e_{(s_1,s_2)}\mapsto (e_{s_1}, e_{s_2}) .\end{equation}


\subsubsection{}\label{Enrich} Consider a (unenriched) Grothendieck pretopology on $\bc$. For each covering $\{U_i\to U\}$ in the pretopology, we consider the formal sequence
$$\amalg_{j,l}(U_j \times_U U_l)\;\to\;\amalg_i U_i \;\to\; U$$
in $k\bc$ (with pullback to be interpreted in $\bc$). The morphism $c=(c_i):U_j \times_U U_l\to\amalg_i U_i$ is such that $c_j$ is the projection $ U_j \times_U U_l\to U_j$, while $c_l$ is $-1$ times the projection  $ U_j \times_U U_l\to U_l$ and $c_i=0$ when $i\not\in\{j,l\}$.

Denote the class of all sequences constructed this way by $\cS$.

\begin{theorem}
The class $\cS$ from \ref{Enrich} is a $k$-linear Grothendieck pretopology on $k\bc$.
\end{theorem}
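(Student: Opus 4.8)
The plan is to verify conditions (PTa) and (PTb) from Definition~\ref{DefPT} directly for the class $\cS$, translating the unenriched pretopology axioms (pt0)--(pt3) into the formal-coproduct language. First I would record the basic compatibility: for any covering $\{U_i\to U\}$ in $\bc$, the associated sequence $\amalg_{j,l}U_j\times_UU_l\to\amalg_iU_i\to U$ is formal (the composite is zero) precisely because of the sign in the definition of $c$ in \ref{Enrich}, and moreover, by the argument of \ref{NotExact} applied after passing to $k\bc$, the induced diagram of $\Hom$-sets out of the representables is compatible with the unenriched coequalizer diagram~\eqref{eqnon}. I would also observe that $\widetilde{\Co}(\cS)$ is closed under the operations that (pt1) and (pt3) provide: an isomorphism $V\to U$ gives a one-term covering, hence lies in $\Co(\cS)$, and iterated coverings $\{V_{ij}\to U_i\to U\}$ produce elements of $\Co^2(\cS)\subset\widetilde{\Co}(\cS)$ built from $\Co(\cS)$.

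For (PTa): given $q:\amalg_\beta Y_\beta\to U$ in $\Co(\cS)$ coming from a covering $\{U_i\to U\}$, and a morphism $f:A\to U$ in $k\bc$, the point is that $f$ is a $k$-linear combination $\sum_t \lambda_t g_t$ of honest morphisms $g_t:A\to U$ in $\bc$, but since $A$ is a single object of $k\bc$ we may treat each $g_t$ separately and then reassemble. For a single $g:A\to U$ in $\bc$, axiom (pt2) gives the covering $\{U_i\times_UA\to A\}$, whose associated $q'\in\Co_A(\cS)$ together with the projections $U_i\times_UA\to U_i$ furnishes the required formal commutative square. To handle a general $k$-linear $f$, I would invoke Lemma~\ref{LemPTa}(i): apply (PTa) to the finite collection $\{(q,g_t)\}_t$ of pairs, producing a single $q'\in\widetilde{\Co}_A(\cS)$ dominating all of them, and then take the appropriate $k$-linear combination of the comparison maps $f'$; the sign bookkeeping is routine because formal morphisms compose bilinearly.

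For (PTb): given the sequence $\amalg_{j,l}U_j\times_UU_l\xrightarrow{p}\amalg_iU_i\xrightarrow{q}U$ in $\cS$ and a formal $f:A\to\amalg_iU_i$ with $q\circ f=0$, the essential content is the universal property of the fibre products $U_j\times_UU_l$ in $\bc$. Writing $f=(f_i)$ with each $f_i=\sum_t\lambda_{i,t}g_{i,t}$ a $k$-combination of morphisms $A\to U_i$ in $\bc$, the condition $q\circ f=0$ says the $U$-composites cancel in the free module $k\bc(A,U)$. I would argue that, possibly after refining $A$ by an element of $\widetilde{\Co}_A(\cS)$ obtained from (PTc) (which holds by Lemma~\ref{LemPTa}(ii)) so that the refined maps factor as honest morphisms with matching composites, each matching pair $(g_{j},g_{l})$ of morphisms into $U_j,U_l$ agreeing over $U$ induces a map into $U_j\times_UU_l$, and these assemble into the required $f':\amalg_\alpha B_\alpha\to\amalg_{j,l}U_j\times_UU_l$ over the refinement $q'$. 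The key point making the zero-composite work is exactly the antisymmetric form of $p$. The main obstacle I anticipate is precisely this (PTb) verification: reconciling $k$-linear combinations of morphisms (where cancellation in the free module need not come from a single honest factorization) with the fibre-product universal property in $\bc$, which forces one to first refine along a covering before the combinatorics of coefficients can be resolved term by term. Once that refinement step is set up carefully using (PTc) and the exactness diagram~\eqref{eqnon}, the remaining diagram chase is bookkeeping.
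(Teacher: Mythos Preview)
Your treatment of (PTa) matches the paper's: handle a single $\bc$-morphism $g:A\to U$ via (pt2) and the pulled-back covering $\{U_i\times_U A\to A\}$, then pass to arbitrary $k$-linear combinations by the iterative argument of Lemma~\ref{LemPTa}(i).

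For (PTb), however, you are working harder than necessary and your argument as written is incomplete. The paper does not verify (PTb) via any refinement; it proves the stronger condition (PTb') directly, taking $q'=\id_A$. The point is that the free-module functor $\Set\to\Mod$, $S\mapsto kS$, sends the exact diagram~\eqref{eqnon} to an acyclic sequence $kA\to kB\to kC$ (difference of the two parallel maps on the left). Concretely: if $\phi:B\to C$ is a map of sets with kernel pair $R\subset B\times B$, and $\sum_b\lambda_b\,b\in kB$ lies in the kernel of $k\phi$, then for each fibre $\phi^{-1}(c)$ one has $\sum_{b\in\phi^{-1}(c)}\lambda_b=0$, so choosing any $b_0$ in that fibre gives $\sum_b\lambda_b\,b=\sum_b\lambda_b(b-b_0)$, a $k$-linear combination of images of elements of $R$. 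Applied to \eqref{eqnon}, this says precisely that any $f:A\to\amalg_iU_i$ in $k\bc$ with $q\circ f=0$ lifts through $p$ to $\amalg_{j,l}U_j\times_UU_l$, with no refinement of $A$ required.

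Thus the ``main obstacle'' you anticipate---that cancellation in the free module might not be expressible through honest fibre-product factorisations---does not actually occur: it is resolved by this elementary free-module computation, not by passing to a cover. Your proposed refinement step via (PTc) is both unnecessary and, as described, not clearly well-defined (you do not specify which covering of $A$ would make the coefficients ``resolve term by term'', and refining by a $\bc$-covering does not change the fact that the components $f_i$ remain $k$-linear combinations).
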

\begin{proof}
We start by proving (PTa). Consider a covering $\{U_i\to U\}$ and a morphism $f\in k\bc(A,U)$. If $f=\lambda f_0$ for $f_0\in \bc(A,U)$ and $\lambda\in k$, then the condition in (PTa) follows easily from (pt2) with $\amalg_\delta C_\delta:=\amalg_i (U_i\times_UA)$ where the pullbacks are taken with respect to $f_0$. Condition (PTa) for arbitrary morphisms $f=\sum_i \lambda_if_i$ with $\lambda_i\in k$ and $f_i\in\bc(A,U)$ then follows from an iterative argument similar to the one in the proof of Lemma~\ref{LemPTa}(i).
 

That (PTb'), so in particular (PTb), holds follows from the observations in \ref{Pres} and the surjection~\eqref{eqST}.
 \end{proof}
%


\begin{remark}\label{RemLinTop}
One can consider the canonical linearisation of the unenriched topology corresponding to the pretopology, which is the $k$-linear topology on $k\bc$ comprising all sieves which contain a trivial linearisation of a sieve in the unenriched topology. 
The latter equals the $k$-linear topology $\topp\cS$, with $\cS$ as in~\ref{Enrich}. 
\end{remark}
\begin{example}
Let $\bc$ be the category of open subspaces of a topological space $X$. It is symmetric monoidal for $U\otimes V=U\times V=U\cap V$. We can consider the usual pretopology of open coverings. The corresponding linearised topology on $k\bc$ is monoidal, for instance by application of Remark~\ref{RemLinTop} and Theorem~\ref{ThmSym}(b). The corresponding localisation of 
$$\PSh(k\bc)\;\simeq\;\Fun(\bc^{\op},\Mod)\;\simeq\;\PSh(X;k)$$ is of course the symmetric monoidal category of sheaves of $k$-modules on $X$.
\end{example}

\subsection*{Acknowledgement}
The author thanks Pavel Etingof, Victor Ostrik and Bregje Pauwels for many interesting discussions. Several ideas in Section~\ref{SecPT} owe their existence to \cite{Sch1, Sch2}. The research was supported by ARC grant DP180102563.

\end{document}